\newtheorem{thm}{Theorem}[section]
\newtheorem{prop}[thm]{Proposition}
\newtheorem{lem}[thm]{Lemma}
\newtheorem{cor}[thm]{Corollary}
\newtheorem{defn}[thm]{Definition}
\newtheorem{qu}[thm]{Question}
\newtheorem{thmx}{Theorem}
\theoremstyle{remark}
\newtheorem{rem}[thm]{Remark}
\newtheorem{ex}[thm]{Example}
\newtheorem{constr}[thm]{Construction}
\title{Contact structures, deformations and taut foliations}
\author{Jonathan Bowden}
\address{Mathematisches Institut, Ludwig-Maximillians-Universit\"at, Theresienstr. 39, 80333 Munich, Germany}
\email{jonathan.bowden@math.lmu.de}
\date{\today}
\begin{document}
\maketitle
\begin{abstract}
Using deformations of foliations to contact structures as well as rigidity properties of Anosov foliations we provide infinite families of examples which show that the space of taut foliations in a given homotopy class of plane fields need not be path connected. Similar methods also show that the space of representations of the fundamental group of a hyperbolic surface to the group of smooth diffeomorphisms of the circle with fixed Euler class is in general not path connected. As an important step along the way we resolve the question of which universally tight contact structures on Seifert fibered spaces are deformations of taut or Reebless foliations when the genus of the base is positive or the twisting number of the contact structure in the sense of Giroux is non-negative.
\end{abstract}

\section{Introduction}
In their book on confoliations Eliashberg and Thurston \cite{ETh} established a fundamental link between the theory of foliations and contact topology, by showing that any foliation that is not the product foliation on $S^2 \times S^1$ can be $C^0$-approximated by a contact structure. The proof of this result naturally leads to the study of confoliations, which are a generalisation of both contact structures and foliations. Recall that a smooth cooriented 2-plane field $\xi = \text{Ker}(\alpha)$ on an oriented 3-manifold $M$ is a confoliation if $\alpha \wedge d \alpha \geq 0$. For the most part interest has focussed on the contact case, where the study of deformations and isotopy are equivalent in view of Gray's Stability Theorem. On the other hand many questions in the deformation theory of foliations or more generally confoliations remain to a large extent unexplored. 

Rather than considering general confoliations, we will focus on questions concerning the topology of the space of foliations. In contact topology one has a tight vs.\ overtwisted dichotomy, which is in some sense mirrored in the theory of foliations by Reebless foliations and those with Reeb components. In analogy with 3-dimensional contact topology where one seeks to understand deformation classes of tight contact structures, we will be primarily concerned with studying the topology of the space of Reebless and taut foliations and the contact structures approximating them. 




It is well known that every contact structure is isotopic to a deformation of a foliation by Etnyre \cite{Etn} (see also Mori \cite{Mor}). More precisely, Etnyre showed that for any contact structure $\xi$ there is a smooth $1$-parameter family $\xi_t$ such that $\xi_0$ is integrable and $\xi_t$ is a contact structure isotopic to $\xi$ for $t >0$. The foliations that Etnyre considers are constructed by completing the foliation given by the pages of an open book supporting the contact structure $\xi$ to a genuine foliation by inserting Reeb components in a neighbourhood of the binding and spiralling accordingly. This led Etnyre to ask whether every universally tight contact structure on a manifold with infinite fundamental group is a deformation of a Reebless foliation. By considering the known criteria for the existence of Reebless foliations on small Seifert fibered spaces, it is easy to see that this is false in general. This was first observed by Lekili and Ozbagci \cite{LeO}. Nevertheless it is still an interesting problem to determine which contact structures can be realised as deformations of Reebless foliations, a problem which was already raised by Eliashberg and Thurston in \cite{ETh}. Furthermore, the counter examples coming from small Seifert fibered spaces are not completely satisfactory, since the obvious necessary condition for a manifold to admit a Reebless foliation is that it admits universally tight contact structures for both orientations, and for small Seifert manifolds this is in fact equivalent to the existence of a Reebless foliation (cf.\ Proposition \ref{existence_crit}).

In contrast to the case of small Seifert manifolds Etnyre's original question has a positive answer for Seifert fibered spaces whose bases have positive genus. There are two cases depending on whether the twisting number $t(\xi)$ of the contact structure $\xi$ is positive or not (cf.\ Definition \ref{def_twisting}). 
\begin{thmx}\label{def_Int}
Let $\xi$ be a universally tight contact structure on a Seifert fibered space with infinite fundamental group and $t(\xi) \geq 0$, then $\xi$ is isotopic to a deformation of a Reebless foliation. If $g>0$ and $t(\xi) < 0$, then $\xi$ is isotopic to a deformation of a taut foliation.
\end{thmx}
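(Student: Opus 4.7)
The plan is to argue via the classification of universally tight contact structures on Seifert fibered spaces (due to Honda and Ghiggini--Lisca--Stipsicz) combined with explicit constructions of Reebless and taut foliations adapted to the fibration, and then to recover $\xi$ itself as an Eliashberg--Thurston perturbation.

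\textbf{Normal form.} First I would put $\xi$ in a standard form relative to the Seifert structure. Since $t(\xi)$ is the maximal twisting of a Legendrian regular fiber against the canonical framing, there is a Legendrian regular fiber realising it; standard neighbourhoods of the singular fibers can then be made convex with standardised dividing data compatible with this framing. Cutting along tori bounding these neighbourhoods decomposes $M$ into pieces on which $\xi$ is determined up to isotopy by discrete invariants, reducing the task to building a foliation piecewise and matching boundary behaviour.

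\textbf{The case $t(\xi) \geq 0$.} Non-negativity of the twisting allows a horizontal (transverse to the fibers) model. I would construct a horizontal foliation on the complement of the singular fibers from a representation of the orbifold fundamental group of the base into $\mathrm{Homeo}_+(S^1)$ whose rotation numbers at the singular fibers match the standard forms prescribed by the normal form, and then extend across the singular fibers by spiralling leaves. The realisability of the required rotation data when $t(\xi)\geq 0$ follows from the classical results of Eisenbud--Hirsch--Neumann, Jankins--Neumann and Naimi. Such a horizontal foliation is Reebless, and Eliashberg--Thurston perturbation produces a universally tight contact structure in the same homotopy class which, by the classification, must be isotopic to $\xi$.

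\textbf{The case $g>0$, $t(\xi)<0$.} Here a purely horizontal foliation with the required negative twisting at a regular fiber cannot exist, so the twisting must be built in by turbulising along a vertical torus. Using $g>0$, I would choose an essential simple closed curve on the base, consider the vertical torus $T$ above it, and cut along $T$. On one side I would place a horizontal foliation (arranged with appropriate rotation data near the singular fibers), and spiral the leaves into $T$ to produce the prescribed negative twisting against the fiber framing. Positive genus is crucial: it provides enough freedom in the holonomy on the complement of $T$ to realise any prescribed boundary data without EHN-type obstructions. The resulting foliation admits a closed transversal meeting every leaf (because of the horizontal piece together with the vertical $S^1$-factor in the turbulised region), hence is taut, and Eliashberg--Thurston perturbation recovers $\xi$ after matching invariants.

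\textbf{Main obstacle.} The delicate point is not producing a Reebless or taut foliation, but verifying that its Eliashberg--Thurston perturbation is isotopic to the given $\xi$ and not merely to some universally tight contact structure in the same homotopy class. This requires tracking through the normal form exactly which twisting data near the singular fibers (and, in the second case, near the turbulised vertical torus) the spiralling of leaves produces after perturbation, and invoking the uniqueness statements in the classification of universally tight contact structures on Seifert spaces to identify the outcome with $\xi$. The bookkeeping of twisting numbers in these local models is where I expect the bulk of the technical work to lie.
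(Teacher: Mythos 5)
Your proposal inverts the basic dichotomy that the paper's proof is organised around, and this is a genuine error rather than a difference of route. By Massot's theorem (quoted in Section \ref{hor_contact}), a universally tight contact structure on a Seifert fibered space is isotopic to a \emph{horizontal} one if and only if its twisting number is \emph{negative}; a horizontal foliation, once perturbed, yields a horizontal contact structure, which by (\cite{Mas}, Proposition 4.5) has $t \leq -1$. So the horizontal foliation you build in the $t(\xi)\geq 0$ case can never have $\xi$ as its perturbation. The correct normal form for $t(\xi)\geq 0$ (Theorem \ref{Massot_normal_gen}) is horizontal only outside a \emph{non-empty} collection of incompressible pre-Lagrangian vertical tori, on which $\xi$ is vertical, and the Reebless foliation the paper constructs in Theorem \ref{Reebless_def} must have exactly these incompressible tori as closed leaves (with horizontal, appropriately co-oriented pieces in between, spiralled into the torus leaves); the vertical tori are where all the non-negative twisting is concentrated. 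Symmetrically, your claim in the $g>0$, $t(\xi)<0$ case that ``a purely horizontal foliation with the required negative twisting cannot exist'' is backwards: the paper's Theorem \ref{negative_pert1} produces precisely a horizontal foliation (after reducing to twisting $-1$ via the fiberwise branched cover of Proposition \ref{cover_form} and Lemma \ref{cover_def}), and turbulisation along a vertical torus is needed only in the exceptional subcase $-b< 2-2g$ where no horizontal foliation exists on $M$.

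A secondary but still substantive gap is the identification step. You assert that the perturbation is ``a universally tight contact structure in the same homotopy class which, by the classification, must be isotopic to $\xi$.'' Homotopy of plane fields plus universal tightness does not determine the isotopy class on Seifert fibered spaces (universally tight structures in a fixed homotopy class are distinguished, e.g., by twisting number or torsion along vertical tori), and no classification of the generality you invoke is available. The paper instead pins down the perturbation by computing its twisting number directly (via Giroux flexibility applied to a torus with Morse--Smale characteristic foliation coming from hyperbolic holonomy) and then appealing to Massot's uniqueness statements: Proposition \ref{con_class} for twisting $-1$, and the uniqueness clause of Theorem \ref{Massot_normal_gen} in the non-negative case. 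Any repair of your argument would have to incorporate both the corrected normal forms and this explicit twisting-number bookkeeping.
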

\noindent The proof of Theorem \ref{def_Int} involves examining the Giroux-type normal forms for universally tight contact structures of Massot \cite{Mas}, \cite{Mas2} and considering foliations that are well adapted to these normal forms. The cases of negative and non-negative twisting are treated separately, with the former being reduced to the $t(\xi)=-1$ case via a covering trick.

Other examples of tight contact structures on manifolds that admit no Reebless foliations were given by Etg\"u \cite{Etg} in the case that the manifold is not Seifert fibered but hyperbolic. However, these examples are neither known to be universally tight, nor is it shown that there are tight contact structures for both orientations. This then suggests the following refinement of Etnyre's original question, which then has an affirmative answer for Seifert fibered spaces:
\begin{qu}\label{taut_def_1}
Does every irreducible $3$-manifold with infinite fundamental group that admits both positive and negative universally tight contact structures necessarily admit a (smooth) Reebless foliation?
\end{qu}




Until recently there was little known about the topology of the space of foliations on a 3-manifold. For the class of horizontal foliations on $S^1$-bundles Larcanch\'e \cite{Lar} showed that the inclusion of the space of horizontal integrable plane fields into the space of all integrable plane fields is homotopic to a point and in particular its image is contained in a single path component in the space of all integrable plane fields. She also showed that any integrable plane field that is sufficiently close to the tangent distribution $T\mathcal{F}$ of a taut foliation $\mathcal{F}$ can be deformed to $T\mathcal{F}$ through integrable plane fields. In her PhD thesis Eynard-Bontemps showed that a much more general result holds. In particular, she proved the following theorem, which mirrors Eliashberg's $h$-principal for overtwisted contact structures.
\begin{thm}[Eynard-Bontemps \cite{Eyn}]
Let $\mathcal{F}_0$ and $\mathcal{F}_1$ be smooth oriented taut foliations on a $3$-manifold $M$ whose tangent distributions are homotopic as (oriented) plane fields. Then $T\mathcal{F}_0$ and $T\mathcal{F}_1$ are smoothly homotopic through integrable plane fields.
\end{thm}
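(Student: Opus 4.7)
The plan is to leverage Eliashberg's $h$-principle for overtwisted contact structures together with Eliashberg-Thurston approximation, with the crucial observation that the conclusion only requires a path through integrable plane fields, which need not be taut or even Reebless. So even though $T\mathcal{F}_0$ and $T\mathcal{F}_1$ are tangent to taut foliations (and hence are only $C^0$-approximable by tight contact structures directly), the intermediate plane fields may be tangent to foliations with Reeb components or other non-taut integrable distributions. This is what makes the problem tractable: we have room to maneuver in the non-Reebless part of the space of integrable plane fields.

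First I would reduce to foliations with Reeb components by turbulization. Given a taut foliation $\mathcal{F}$ and a transverse loop $\gamma$ (which exists by tautness), there is an explicit one-parameter family of integrable plane fields in a solid-torus neighbourhood of $\gamma$ interpolating between $T\mathcal{F}$ and $T\mathcal{F}^*$, where $\mathcal{F}^*$ is obtained by turbulizing $\mathcal{F}$ along $\gamma$. This provides a path of integrable plane fields from $T\mathcal{F}_i$ to $T\mathcal{F}_i^*$ for $i=0,1$, and hence reduces the problem to connecting $T\mathcal{F}_0^*$ and $T\mathcal{F}_1^*$ through integrable plane fields. Applying Eliashberg-Thurston, one obtains overtwisted contact structures $\xi_i$ that $C^0$-approximate $\mathcal{F}_i^*$, with the overtwisted disks supplied by the Reeb components. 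Since these approximations lie in the same homotopy class of plane fields as $T\mathcal{F}_i$ (the confoliation interpolation does not change the plane-field homotopy class), and since $T\mathcal{F}_0$ and $T\mathcal{F}_1$ are homotopic by hypothesis, Eliashberg's classification yields a path of overtwisted contact structures $\xi_t$ joining $\xi_0$ to $\xi_1$.

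The main obstacle is converting this path $\xi_t$ of contact structures back into a path of integrable plane fields. This amounts to a parametric form of reverse-Eliashberg-Thurston: given a family of overtwisted contact structures, one wishes to replace it by a family of integrable plane fields (with Reeb-type behaviour) in the same plane-field homotopy class. The local model near an overtwisted disk does provide such an integrable deformation, and the idea would be to combine this model with a cut-and-paste globalisation carried out in a parameter-dependent way, analogous to the techniques underlying Eliashberg's $h$-principle itself. Coordinating the locations of overtwisted disks and Reeb components uniformly in $t$, and ensuring the local interpolations patch smoothly, is the technical heart of the argument. Concatenating the resulting family of integrable plane fields with the turbulization paths from the first step then yields the required homotopy from $T\mathcal{F}_0$ to $T\mathcal{F}_1$ through integrable plane fields.
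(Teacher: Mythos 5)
Your first step (turbulising each $\mathcal{F}_i$ along a closed transversal to reach a foliation $\mathcal{F}_i^*$ with a Reeb component, through an explicit path of integrable plane fields) is fine, and so is the observation that a perturbation of $\mathcal{F}_i^*$ is overtwisted and that Eliashberg's $h$-principle then connects $\xi_0$ to $\xi_1$ through overtwisted contact structures. The fatal gap is the last step, which you correctly identify as ``the technical heart'' but do not actually supply: you need to replace the path $\xi_t$ of overtwisted contact structures by a path of \emph{integrable} plane fields whose endpoints are the prescribed integrable plane fields $T\mathcal{F}_0^*$ and $T\mathcal{F}_1^*$. No such parametric (let alone relative) ``reverse Eliashberg--Thurston'' statement is available. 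The non-parametric statement --- every contact structure is a deformation of a foliation --- is Etnyre's theorem, and its proof goes through a supporting open book; the resulting foliation depends on that choice in an essentially discontinuous way, and there is no known procedure that produces open books, or foliations, varying continuously with a family of contact structures. Note also that the paper itself stresses that even the \emph{forward} parametric Eliashberg--Thurston theorem fails in general; the reverse direction is strictly harder. Finally, even granting some parametric reverse construction, you would still have to match its endpoints with $T\mathcal{F}_i^*$ through integrable plane fields, since the segments $T\mathcal{F}_i^* \rightsquigarrow \xi_i$ in your argument pass through confoliations and contact structures, which are not integrable and hence cannot appear in the homotopy you are asked to produce. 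So the argument reduces the theorem to a statement at least as strong as the theorem itself.

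For comparison, the proof this paper refers to (Eynard-Bontemps's thesis) does not use contact topology at all: one first homotopes the underlying plane fields, then fills in using a parametric version of the Thurston--Larcanch\'e construction, which joins foliations over solid tori by inserting Reeb components; the flexibility lives entirely inside the space of integrable plane fields, which is exactly what your detour through contact structures loses.
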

\noindent The foliations that Eynard-Bontemps constructs use a parametric version of a construction of Thurston, first exploited by Larcanch\'e, that allows foliations to be extended over solid tori using foliations that contain Reeb components. In view of this it is natural to ask whether any two horizontal foliations are in fact homotopic through horizontal foliations or more generally whether any two taut foliations whose tangent plane fields are homotopic are homotopic through taut or even Reebless foliations. Since any horizontal foliation on an $S^1$-bundle is essentially determined by its holonomy representation, the former question is then related to the topology of the representation space $\text{Rep}(\pi_1(\Sigma_g), \text{Diff}_+(S^1))$ considered with its natural $C^{\infty}$-topology. Concerning the topology of this space we prove the following:
\begin{thmx}\label{int_Comp}
Let $\#Comp(e)$ denote the number of path components of $\text{Rep}_e(\pi_1(\Sigma_g), \text{Diff}_+(S^1))$ with fixed Euler class $e \neq 0$ such that $e$ divides $2g-2 \neq 0$ and write $2g-2 = n\thinspace e$. Then the following holds:
\[\#Comp(e) \geq n^{2g} +1.\]
\end{thmx}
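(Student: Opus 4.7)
The plan is to use the holonomy correspondence between horizontal foliations on the circle bundle $M_e \to \Sigma_g$ with Euler number $e$ and representations $\rho : \pi_1(\Sigma_g) \to \text{Diff}_+(S^1)$ with Euler class $e$: a continuous path in $\text{Rep}_e$ induces a path of horizontal foliations on $M_e$, so it suffices to exhibit $n^{2g}+1$ horizontal foliations no two of which are connected through horizontal foliations. The $n^{2g}$ foliations will come from distinct lifts of a Fuchsian representation through an $n$-fold cover, and the extra one from a non-Anosov construction.

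For the main batch, I would fix a Fuchsian representation $\rho_F : \pi_1(\Sigma_g) \to \text{PSL}(2,\mathbb{R}) \subset \text{Diff}_+(S^1)$ of Euler class $2g-2 = ne$. Since $n$ divides $e(\rho_F)$, $\rho_F$ admits lifts to the $n$-fold covering group $\text{PSL}(2,\mathbb{R})^{(n)}$, and the set of lifts is a torsor under $H^1(\Sigma_g;\mathbb{Z}/n) \cong (\mathbb{Z}/n)^{2g}$, yielding $n^{2g}$ lifts $\tilde\rho_\chi$. Viewed via the inclusion $\text{PSL}(2,\mathbb{R})^{(n)} \subset \text{Diff}_+(\widetilde{S^1}) \cong \text{Diff}_+(S^1)$, each $\tilde\rho_\chi$ becomes a representation of Euler class $(2g-2)/n = e$, and the associated horizontal foliation $\mathcal F_\chi$ on $M_e$ is a fibrewise $n$-fold pullback of the weak stable foliation of the Fuchsian geodesic flow on $T^1\Sigma_g$, hence Anosov.

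To separate the $\mathcal F_\chi$ into distinct path components, suppose $\tilde\rho_\chi$ and $\tilde\rho_{\chi'}$ are joined by a path $\rho_t$ in $\text{Rep}_e$; I would apply the Eliashberg--Thurston construction parametrically to obtain a continuous family of positive contact structures $\xi_t$ approximating $\mathcal F_{\rho_t}$. Invoking rigidity of Anosov foliations (every Anosov foliation deforms to an essentially unique positive contact structure up to isotopy) the endpoints $\xi_0, \xi_1$ would then be isotopic to canonical contact structures $\xi_\chi, \xi_{\chi'}$ associated to $\mathcal F_\chi, \mathcal F_{\chi'}$, so $\xi_\chi \simeq \xi_{\chi'}$ would follow. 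I would rule this out for $\chi \neq \chi'$ by combining the classification of universally tight contact structures on Seifert fibered spaces (Massot's normal forms underlying Theorem \ref{def_Int}) with a computation of a discrete invariant sensitive to the character $\chi$ (for instance the plane-field $d_3$-invariant of $T\mathcal F_\chi$, or Massot's relative framing data along a regular fibre). For the $+1$ component I would exhibit one additional representation with Euler class $e$ whose horizontal foliation is of a genuinely non-Anosov type, whose approximating contact structure therefore lies in a deformation class disjoint from all the $\xi_\chi$ (e.g.\ one whose plane-field homotopy class already differs from those of the Fuchsian lifts). The main obstacle is this distinguishing step: translating the discrete combinatorial data $\chi \in (\mathbb{Z}/n)^{2g}$ into an honest isotopy invariant of $\xi_\chi$, and verifying that a continuous path of representations does produce a genuine isotopy of the contact approximations rather than merely a homotopy through confoliations or plane fields.
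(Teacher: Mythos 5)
Your overall strategy coincides with the paper's: pass from a path of representations to a path of horizontal foliations via the holonomy correspondence, perturb parametrically to contact structures, conclude by Gray stability that the endpoint contact structures are isotopic, and contradict this with a discrete contact invariant, with the $n^{2g}$ classes coming from lifts of a Fuchsian representation and the $+1$ from a foliation of a different type. But the step you yourself flag as the main obstacle --- producing an honest isotopy invariant sensitive to $\chi$ --- is a genuine gap, and your first candidate cannot work. All the foliations in question are horizontal, and any two horizontal plane fields on $M(e)$ are homotopic (the space of complements of the vertical line field is contractible), so the homotopy class of $T\mathcal{F}_\chi$, and in particular $d_3$, is the same for every $\chi$ and also for your extra non-Anosov example; the whole interest of the theorem is that these foliations are homotopic as plane fields yet lie in different components of $\text{Rep}_e$. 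The invariants the paper actually uses are contact-topological: the twisting number separates the $+1$ class (a $\text{PSL}(2,\mathbb{R})$-foliation with hyperbolic holonomy around an embedded curve perturbs to a horizontal contact structure with $t(\xi)=-1$, whereas the lifts of the Fuchsian representation perturb to vertical contact structures with $t(\xi)=-n$), and for fixed twisting $-n$ the fiberwise $n$-fold covering $M(e) \to ST^*\Sigma_g$ determined by a vertical contact structure is a deformation invariant by Giroux (\cite{Gir}, Lemme 3.9); its fiberwise isomorphism class, an element of $H^1(\Sigma_g,\mathbb{Z}_n)$, is what survives the ambiguity of the holonomy-to-foliation correspondence (foliations are only well defined up to bundle automorphism) and yields the $n^{2g}$ count.

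The second issue you flag --- whether a continuous path of representations really produces an isotopy of contact approximations --- is resolved in the paper not by Vogel's $C^0$-uniqueness (mentioned only as an alternative route) but by linear perturbations: since $e \neq 0$ the foliations $\mathcal{F}_{\rho_t}$ have no closed leaves and $M(e)$ does not fiber over $S^1$, so each admits a linear deformation; linearity is an open convex condition, so a partition of unity produces a smooth family of contact structures over the whole path (Proposition \ref{fam_def}), and any two positive linear deformations of a fixed foliation are isotopic (Proposition \ref{lin_def}). With these two repairs your argument becomes essentially the paper's first proof; the paper also gives a second, contact-free proof using rigidity of Anosov representations and rotation numbers (Theorem \ref{open_closed_Anosov}).
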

This should be compared with results of Goldman \cite{Gol} who showed that the space of representations of $\pi_1(\Sigma_g)$ to the $n$-fold cover of $\textit{PSL}(2,\mathbb{R})$ with Euler number satisfying $2g-2 = n\thinspace e$ has precisely $n^{2g}$ components. In particular, the proof of Theorem \ref{int_Comp} shows that the images of these components in $\text{Rep}_e(\pi_1(\Sigma_g), \text{Diff}_+(S^1))$ remain distinct. The idea behind the proof of this theorem is very simple: a smooth family of representations $\rho_t$ corresponds to a smooth family of foliations $\mathcal{F}_t$ via the suspension construction and one then deforms this family to a family of contact structures using a parametric version of Eliashberg and Thurston's perturbation theorem. Deformations of contact structures correspond to isotopies via Gray's Stability Theorem and this then gives an isotopy of contact structures, which then distinguish path components in the representation space.

In general, however, there is no parametric version of Eliashberg and Thurston's perturbation theorem, since in general the contact structure approximating a foliation is not unique. On the other hand under certain additional assumptions, that are for instance true for horizontal foliations on non-trivial $S^1$-bundles, Vogel has shown a remarkable uniqueness result for the isotopy class of a contact structure approximating a foliation, which implies in particular that this isotopy class is in fact a $C^0$-deformation invariant in certain situations. 
\begin{thm}[Vogel \cite{Vog}]
Let $\mathcal{F}$ be an oriented $C^2$-foliation without torus leaves. Assume furthermore that $\mathcal{F}$ is neither a foliation by planes nor by cylinders only. Then there is a $C^0$-neighbourhood $\mathcal{U}_0$ of $T\mathcal{F}$ in the space of oriented plane fields so that all positive contact structures in $\mathcal{U}_0$ are isotopic.
\end{thm}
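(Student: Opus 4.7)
The plan is to show that any two positive contact structures $\xi_0, \xi_1$ lying in a sufficiently small $C^0$-neighbourhood of $T\mathcal{F}$ can be joined by a path of contact structures, which then yields an isotopy via Gray stability. A naive convex combination of defining 1-forms $(1-t)\alpha_0 + t\alpha_1$ fails because the contact condition depends on derivatives and is not preserved under $C^0$-small interpolations. Instead, the strategy is to exploit the local dynamics of $\mathcal{F}$ to rigidify the approximating contact structure near holonomy-carrying leaves and then patch the local rigidifications together.

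The first step I would carry out is a local analysis near a leaf $L$ of $\mathcal{F}$ with nontrivial linear holonomy, say a hyperbolic contracting element. Along a transversal orbit converging to $L$, any positive contact structure $\xi$ that is $C^0$-close to $T\mathcal{F}$ must twist across $L$ in a direction prescribed by the sign of the holonomy derivative combined with positivity; this forces a canonical local model, unique up to a small contact isotopy. Applying this to both $\xi_0$ and $\xi_1$ yields normal forms in a neighbourhood of $L$ that agree after a contact isotopy supported in that neighbourhood.

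Next I would use the structural hypotheses to propagate this local uniqueness. The assumption that $\mathcal{F}$ has no torus leaves, combined with the exclusion of foliations by planes or cylinders only, should ensure via Novikov-type arguments that leaves with nontrivial linear holonomy are dense, or at least that their stable manifolds sweep out the complement of a well-controlled exceptional minimal set. The attracting dynamics then allow one to transport the local normal form along the foliation, globalising the rigidification to a neighbourhood of each non-minimal leaf, while a separate flexibility argument inside any exceptional minimal set handles the remaining piece.

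The main obstacle I foresee is assembling these local isotopies into one global isotopy through contact structures. One must show that on overlaps the various canonical forms differ by an isotopy that itself can be realised within the $C^0$-neighbourhood $\mathcal{U}_0$. The exclusion of torus leaves is essential at precisely this point: a torus leaf admits non-isotopic approximations of opposite twisting on its two sides, as in the standard Eliashberg--Thurston models, which would immediately obstruct uniqueness in any $C^0$-neighbourhood. I expect the careful extension of local contact isotopies across the exceptional minimal sets, using the dynamical hypotheses to rule out such ambiguity, to be the technically hardest step of the proof.
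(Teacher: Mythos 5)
First, note that the paper does not prove this statement at all: it is quoted verbatim from Vogel's preprint \cite{Vog} and used as a black box, so there is no internal proof to compare against. Judged on its own terms, your sketch has the right coarse skeleton (local rigidity near holonomy, propagation, patching, and the correct intuition that torus leaves are the obstruction because of the two non-isotopic one-sided twistings in the Eliashberg--Thurston models), but the central step is asserted rather than argued. The claim that $C^0$-closeness to $T\mathcal{F}$ plus positivity ``forces a canonical local model, unique up to small contact isotopy'' near a leaf with contracting holonomy is essentially the whole content of the theorem. $C^0$-closeness of plane fields gives no control whatsoever on the derivatives entering the contact condition, so a priori $\xi$ can perform arbitrarily many full twists in a thin transverse gap while remaining $C^0$-close to $T\mathcal{F}$; excluding this excess twisting (and the resulting change of isotopy class, or even overtwistedness) is the hard analytic/combinatorial core of Vogel's argument, which proceeds via a detailed study of the characteristic foliations induced on boundaries of neighbourhoods of minimal sets. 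Your proposal never engages with this.

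Two subsidiary claims are also not consequences of the hypotheses. The assumption ``no torus leaves, not by planes, not by cylinders only'' does not make leaves with nontrivial linear holonomy dense, nor does it make their basins sweep out the complement of an exceptional minimal set: the foliation may be minimal with no linear holonomy available where you need it, and it may also contain closed leaves of genus at least two, which your local analysis (modelled on a contracting transversal orbit) does not cover. Moreover, the roles of minimal sets are inverted in your outline: exceptional minimal sets are where one must work hardest to establish rigidity (via Sacksteder-type resilient leaves and the induced structure on transversals), while the genuinely delicate gluing happens on the complementary regions, which are foliated products or almost without holonomy; there is no ``separate flexibility argument inside any exceptional minimal set'' that would rescue the construction. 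As it stands the proposal is a plausible narrative of what a proof should accomplish, not a proof.
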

\noindent If one considers only deformations that are $C^{\infty}$, or continuous in the $C^1$-topology would even suffice, then one can give a comparatively simple argument using linear perturbations to deform families of foliations to contact structures in a smooth manner (cf.\ Section \ref{perturb_fol}). Such deformations then provide the desired obstructions used to prove Theorem \ref{int_Comp} and this suffices for our purposes.


We also present a second independent proof of Theorem \ref{int_Comp}, which uses the rich structure theory of Anosov foliations instead of contact topology. Matsumoto \cite{Mat} has shown that any representation in $\text{Rep}_e(\pi_1(\Sigma_g), \text{Diff}_+(S^1))$ with maximal Euler class $e = \pm(2g-2)$ is topologically conjugate to a Fuchsian representation given by a cocompact lattice in $\textit{PSL}(2,\mathbb{R})$ and Ghys \cite{Ghy} showed that this conjugacy can be assumed to be smooth. Furthermore, the space of Fuchsian representations of $\pi_1(\Sigma_g)$ can be identified with Teichm\"uller space and is thus contractible. The suspension foliations corresponding to Fuchsian representations are Anosov in the sense that they are diffeomorphic to the weak stable foliation of the Anosov flow given by the geodesic flow of some hyperbolic metric on the unit cotangent bundle $ST^*\Sigma_g$. By considering fiberwise coverings it is easy to construct Anosov representations with non-maximal Euler classes. In general not every horizontal foliation lies in the same component as an Anosov foliation. We do however obtain the following analogue of Ghys' result, which answers a question posed to us by Y. Mitsumatsu.
\begin{thmx}
Any representation $\rho \in \text{Rep}(\pi_1(\Sigma_g),\text{Diff}_+(S^1))$ that lies in the $C^0$-path component of an Anosov representation $\rho_{An}$ is itself Anosov. In particular, it is conjugate to a discrete subgroup of a finite covering of $PSL(2,\mathbb{R})$ and is injective.
\end{thmx}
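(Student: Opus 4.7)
Plan:

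The plan is to reduce to representations of maximal Euler class via a fiberwise covering construction, invoke the Matsumoto--Ghys rigidity theorem, and then descend the Fuchsian structure obtained.

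First, I would note that the Euler class is a continuous, integer-valued function on $\text{Rep}(\pi_1(\Sigma_g),\text{Diff}_+(S^1))$ equipped with its $C^0$-topology, hence locally constant along the path: $e(\rho_t) = e(\rho_{An})$ for every $t$. Since $\rho_{An}$ is Anosov, the common value has the form $\pm(2g-2)/n$ for some positive integer $n$ dividing $2g-2$, and (up to smooth conjugation) $\rho_{An}$ factors through the $n$-fold central extension $\tilde G_n$ of $PSL(2,\mathbb{R})$.

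Second, the suspension gives a continuous family of horizontal foliations $\mathcal{F}_t$ on the $S^1$-bundle $M \to \Sigma_g$ of Euler number $e$, which is itself an $n$-fold fiberwise cover of the $S^1$-bundle $\bar M \to \Sigma_g$ of maximal Euler number $\pm(2g-2)$, with deck group $D \cong \mathbb{Z}/n$. At $t = 0$ the foliation $\mathcal{F}_0$ is $D$-invariant because $\rho_{An}$ takes values in $\tilde G_n$, which centralizes its own central $\mathbb{Z}/n$; so $\mathcal{F}_0$ descends to a Fuchsian foliation on $\bar M$. Granted that $D$-invariance persists along the path, the descended family $\bar{\mathcal{F}}_t$ corresponds to a continuous family $\bar\rho_t$ of representations of maximal Euler class, each smoothly conjugate to a Fuchsian representation by Matsumoto \cite{Mat} and Ghys \cite{Ghy}. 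Lifting these smooth conjugacies back through the cover $M \to \bar M$ then exhibits $\rho_t$ as conjugate (in $\text{Diff}_+(S^1)$) to a discrete subgroup of $\tilde G_n$, giving both the Anosov conclusion and injectivity.

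The principal obstacle will be propagating the $D$-equivariance of $\rho_{An}$ along the entire $C^0$-path to a generic $\rho_t$. I would address this by a centralizer analysis on the Fuchsian side: the centralizer in $\text{Diff}_+(\bar S^1)$ of a discrete Fuchsian subgroup $\bar\rho_t(\pi_1(\Sigma_g))$ admits a unique $\mathbb{Z}/n$-subgroup, namely the image of the center of $\tilde G_n$. A putative conjugation realising the Fuchsian form of $\bar\rho_t$ must then normalise this canonical $\mathbb{Z}/n$, inducing a class in $\text{Aut}(\mathbb{Z}/n) = (\mathbb{Z}/n)^\times$; continuity of the path together with the discreteness of $(\mathbb{Z}/n)^\times$ and the fact that the class is trivial at $t=0$ forces it to remain trivial, yielding the required equivariance for every $t$. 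This rigidity input is of the same flavour as the Anosov rigidity emphasised throughout the paper, and is the heart of the argument.
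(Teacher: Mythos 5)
Your reduction to the maximal Euler class breaks down at exactly the point you flag as the ``principal obstacle,'' and the fix you propose is circular. A representation $\rho_t$ along the path is an arbitrary element of $\text{Rep}_e(\pi_1(\Sigma_g),\text{Diff}_+(S^1))$; the locus of representations commuting with the rotation $R_{1/n}$ (equivalently, of $D$-invariant suspension foliations) is a closed but not open subset of the representation space, so the path can leave it instantly at $t=0$. Consequently the descended family $\bar{\mathcal{F}}_t$ and the representations $\bar\rho_t$ simply do not exist for generic $t$, and your centralizer analysis presupposes what it is meant to establish: you argue about the canonical $\mathbb{Z}/n$ inside the centralizer of the \emph{Fuchsian} group $\bar\rho_t(\pi_1(\Sigma_g))$, but $\bar\rho_t$ is only defined once the equivariance of $\rho_t$ is known. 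For a general $\rho_t$ of non-maximal Euler class the centralizer of the image in $\text{Diff}_+(S^1)$ can be trivial and $\rho_t$ need not even be injective --- indeed the existence of non-injective representations in these Euler classes is precisely why the theorem has content --- so no amount of continuity of a class in $(\mathbb{Z}/n)^\times$ can substitute for the missing invariance.

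The paper avoids any descent and works with $\rho_t$ directly, showing that the set of Anosov parameters is open and closed in $[0,1]$. Openness rests on Lemma \ref{top_stab_An}, a genuinely dynamical statement that the rotation numbers of all elements are locally constant near an Anosov representation (proved by trapping flow lines of a perturbed vector field in neighbourhoods of closed geodesics of the stable foliation), combined with continuity of translation numbers, Matsumoto's numerical criterion (Lemma \ref{Matsumoto_Lemma}) identifying the bounded integral Euler class, an argument via Duminy's theorem ruling out exceptional minimal sets, and finally the Matsumoto--Ghys rigidity to upgrade topological to smooth conjugacy. Closedness uses Goldman's finiteness of components of $\text{Rep}_{max}(\pi_1(\Sigma_g),G_k)$ together with conjugation invariance and $C^0$-continuity of translation numbers. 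If you want to rescue your outline, you would need to replace the equivariance-propagation step with an argument of this type that controls an \emph{arbitrary} nearby representation through conjugacy-invariant numerical data; the ordinary Euler class alone is far too coarse for this, which is why the bounded Euler class and rotation numbers enter.
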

\noindent Since non-injective representations always exist in the case of non-maximal Euler class this immediately implies the existence of more than one path component in the representation space for any non-maximal Euler class that admits Anosov representations. By using certain conjugacy invariants (cf.\ Theorem \ref{open_closed_Anosov}) it is then easy to recover the precise estimates of Theorem \ref{int_Comp}.

Of course not every taut foliation on an $S^1$-bundle is horizontal so this theorem still leaves open the question of whether taut foliations are always deformable through taut foliations. The first example of a pair of oriented taut foliations that are homotopic as foliations but not as (oriented) taut foliations is due to Vogel \cite{Vog}. By considering foliations on certain small Seifert fibered manifolds we obtain an infinite family of examples that have the additional properties that they still cannot be deformed to one another through taut foliations even if one forgets orientations or if one considers only diffeomorphism classes of foliations, in contrast to the situation in Vogel's example. 
\begin{thmx}\label{taut_not_conn}
There exist  an infinite family of manifolds $M_n$ each admitting a pair of taut foliations $\mathcal{F}_0,\mathcal{F}_1$ that are homotopic as oriented foliations but not as taut foliations. Furthermore, the same result holds true for unoriented foliations or if one considers diffeomorphism classes of foliations.
\end{thmx}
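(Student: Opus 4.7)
The strategy is to realize $\mathcal{F}_0$ and $\mathcal{F}_1$ on an infinite family of small Seifert fibered spaces $M_n$, so that each $\mathcal{F}_i$ can be perturbed to a universally tight contact structure $\xi_i$, with $\xi_0$ and $\xi_1$ non-isotopic but sharing the same homotopy class of plane fields. Because small Seifert spaces are atoroidal, no taut foliation on them has a torus leaf, so Vogel's uniqueness theorem applies to every foliation along a hypothetical path. Combined with the linear perturbation technique of Section \ref{perturb_fol} and Gray stability, this turns any path of taut foliations $\mathcal{F}_0 \leadsto \mathcal{F}_1$ into an isotopy $\xi_0 \leadsto \xi_1$, producing the desired contradiction.

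\emph{Step 1 (construction).} For each $n$ in a suitable infinite sequence I would pick a small Seifert fibered manifold $M_n$ with base $S^2$ and three exceptional fibers whose Seifert invariants are chosen so that (a) $M_n$ admits universally tight contact structures in both orientations, hence a Reebless foliation by Proposition \ref{existence_crit}, and (b) the classification of universally tight contact structures on $M_n$ produces at least two non-isotopic representatives $\xi_0,\xi_1$ lying in the same homotopy class of plane fields. Combining Theorem \ref{def_Int} with the Giroux-type normal forms of Massot, I construct taut foliations $\mathcal{F}_0,\mathcal{F}_1$ on $M_n$ that are well adapted to $\xi_0,\xi_1$, so that $\xi_i$ arises as a deformation of $\mathcal{F}_i$. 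Matching Euler class and twisting ensures $T\mathcal{F}_0 \simeq T\mathcal{F}_1$ as plane fields.

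\emph{Step 2 (obstruction).} Suppose a smooth path of taut foliations $(\mathcal{F}_t)_{t\in[0,1]}$ joins $\mathcal{F}_0$ and $\mathcal{F}_1$. Since $M_n$ is atoroidal, no $\mathcal{F}_t$ has torus leaves, nor is it a foliation by planes or cylinders, so Vogel's theorem yields, for each $t$, a $C^0$-neighbourhood of $T\mathcal{F}_t$ in which all positive contact perturbations are isotopic. The linear perturbation argument of Section \ref{perturb_fol} upgrades the path $\mathcal{F}_t$ to a continuous path of positive contact structures $\xi_t$; Vogel's uniqueness then forces the isotopy class $[\xi_t]$ to be locally constant on $[0,1]$, hence constant, and Gray stability converts this into an honest isotopy $\xi_0 \simeq \xi_1$, contradicting the choice of $\xi_0$ and $\xi_1$.

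\emph{Step 3 (unoriented/diffeomorphism invariance).} For the stronger statement I must verify that $\xi_0$ is not isotopic to $\phi_*\xi_1$ or $\phi_*\overline{\xi_1}$ for any self-diffeomorphism $\phi$ of $M_n$. Here I would use the rigidity of small Seifert fibrations: the mapping class group of $M_n$ acts on the fibration by a finite group (after discarding a few exceptional Seifert invariants), so its action on the finite set of universally tight contact structures can be computed explicitly; homotopy-theoretic invariants such as $d_3$ of the underlying plane field, together with the division of Spin$^c$-structures over homotopy classes of plane fields, then separate the relevant orbits. Varying $n$ produces the required infinite family.

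\emph{Main obstacle.} The delicate point is Step 3: arranging Seifert invariants so that \emph{simultaneously} (i) the classification of universally tight contact structures supplies a pair $\xi_0,\xi_1$ in a single plane-field homotopy class, (ii) the mapping class group action on tight contact structures does not identify them, and (iii) orientation reversal does not swap them. Steps 1 and 2 are essentially formal once a good $M_n$ is chosen, while the bookkeeping in Step 3 is what powers the infinitude of the family.
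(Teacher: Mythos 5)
Your Steps 1 and 2 follow the paper's architecture (small Seifert fibered spaces, linear perturbation of a path of closed-leaf-free taut foliations via Proposition \ref{fam_def}, Gray stability via Proposition \ref{lin_def}), but Step 3 --- the part you yourself flag as the crux --- has a genuine gap. You propose to separate $\xi_0$ from $\phi_*\xi_1$ and $\phi_*\overline{\xi_1}$ using $d_3$ and the partition of Spin$^c$-structures over homotopy classes of plane fields. These are homotopy invariants of the underlying plane field, and by construction $T\mathcal{F}_0$ and $T\mathcal{F}_1$, hence $\xi_0$ and $\xi_1$, lie in the \emph{same} homotopy class of plane fields; so already for $\phi=\mathrm{id}$ these invariants see nothing, and they certainly cannot rule out $\xi_0 \cong \phi_*\overline{\xi_1}$. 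This is not a hypothetical failure: the paper recalls Vogel's example on $\Sigma(2,3,11)$, where the two horizontal contact structures $\xi,\overline{\xi}$ are non-isotopic yet become contactomorphic under the fiber-orientation-reversing diffeomorphism that every Brieskorn sphere admits, so the unoriented/diffeomorphism statement genuinely fails for that pair. What is needed --- and what the paper supplies --- is a contactomorphism invariant, insensitive to reversing the orientation of the plane field, that takes \emph{different values} on the two contact structures: the twisting number $t(\xi)$. On $M=-\Sigma(2,3,6k-1)$ the arithmetic of equation (\ref{nec_cond}) forces the vertical contact structure to have $t=-(6k-7)$, while the pullback $p^*\xi_l$ under a $(6l-1)$-fold fiberwise cover satisfies $-t(p^*\xi_l) \le -(6l-1)t(\xi_l) < 6k-7$ whenever $6l-1$ is coprime to $6k-7$; since $t$ is preserved by all diffeomorphisms and by orientation reversal of the plane field, this is what settles your Step 3.

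There is a secondary gap in Step 1: on a genus-zero base you cannot invoke Theorem \ref{def_Int} for the taut case (it requires $g>0$ there), and it is \emph{not} known that every universally tight contact structure on a small Seifert fibered space is a deformation of a taut foliation --- the paper says so explicitly after Proposition \ref{existence_crit}. You must choose the pair $\xi_0,\xi_1$ among those actually covered by Theorem \ref{negative_pert2} (the vertical contact structure, and branched-cover pullbacks of horizontal foliations on quotient Seifert spaces via Proposition \ref{cover_form} and Lemma \ref{cover_def}); arranging this simultaneously with the coprimality condition above is precisely where the specific family $\Sigma(2,3,6k-1)$ and the arithmetic bookkeeping enter, and it is not ``essentially formal.''
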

\noindent Since the manifolds considered in Theorem \ref{taut_not_conn} are non-Haken the notions of tautness and Reeblessness coincide, so in particular it follows that any deformation of foliations between the foliations $\mathcal{F}_0$ and $\mathcal{F}_1$ must contain Reeb components.

Further examples of taut foliations that cannot be joined by a path in the space of Reebless foliations are given by using the special structure of foliations on the unit cotangent bundle over a closed surface of genus at least $2$. In particular, we show that the weak unstable foliation of the geodesic flow $\mathcal{F}_{hor}$ on $ST^*\Sigma_g$ cannot be smoothly deformed to any taut foliation with a torus leaf $\mathcal{F}_T$ without introducing Reeb components (Corollary \ref{Reebless_comp}). One can view this fact as a generalisation of the result of Ghys and Matsumoto concerning horizontal foliations of $ST^*\Sigma_g$, in that it shows that the path component of an Anosov foliation in the space of all Reebless foliations contains only Anosov foliations. This is perhaps slightly surprising since for the product foliation on $\Sigma_g \times S^1$ one can spiral along any vertical torus $\gamma \times S^1$ to obtain smooth deformations that introduce incompressible torus leaves. On the other hand, although there exists no smooth deformation through taut foliations, one can construct a taut deformation between $\mathcal{F}_{hor}$ and $\mathcal{F}_T$ through foliations that are only of class $C^0$ (Proposition \ref{C_0_difference}). Thus these examples highlight once more the difference between foliations of class $C^0$ and those of higher regularity.



\subsection*{Outline of paper:} In Section \ref{Fol_and_Cont} we recall some basic definitions and constructions of foliations and contact structures and in Section \ref{Seifert_man} we review some basic facts about Seifert fibered spaces and horizontal foliations. Section \ref{perturb_fol} contains the relevant versions of Eliashberg and Thurston's results on deforming foliations to contact structures and Section \ref{hor_contact} contains background on horizontal contact structures and normal forms. In Sections \ref{Deform_taut} and \ref{Deform_Reebless} we prove Theorem \ref{def_Int} first for negative twisting numbers and then in the non-negative case. Section \ref{horizontal_and_taut} contains our main results concerning deformations of taut foliations and finally in Section \ref{Anosov_fol} we analyse components of the representation space of a surface group that contain Anosov representations, yielding an alternative proof of Theorem \ref{int_Comp}.

\subsection*{Acknowledgments:} We thank T. Vogel for his patience in explaining many wonderful ideas which provided the chief source of inspiration for the results of this article. We also thank Y. Mitsumatsu for his stimulating questions and H. Eynard-Bontemps, S. Matsumoto and the referee for helpful comments. The hospitality of the Max Planck Institute f\"ur Mathematik in Bonn, where part of this research was carried out, is also gratefully acknowledged. This research was also partially supported by DFG Grant BO4423/1-1.

\subsection*{Conventions:} Unless otherwise specified all manifolds, contact structures and foliations are smooth and (co)oriented and all manifolds are closed.

\section{Foliations and contact structures}\label{Fol_and_Cont}
In this section we recall some basic definitions and constructions for foliations and contact structures. For a more in depth discussion of foliations on $3$-manifolds we refer to the book of Calegari \cite{Cal}.

\subsection{Foliations:} A codimension-$1$ foliation $\mathcal{F}$ on a $3$-manifold $M$ is a decomposition of $M$ into connected injectively immersed surfaces called \emph{leaves} that is locally diffeomorphic to level sets of the projection of $\mathbb{R}^3$ to the $z$-axis. We will always assume that all foliations are smooth and cooriented unless otherwise specified. One can then define a global non-vanishing $1$-form $\alpha$ by requiring that 
$$\text{Ker}(\alpha) = T \mathcal{F} = \xi \subset T M.$$
By Frobenius' Theorem a cooriented distribution is tangent to a foliation if and only if 
$$\alpha \wedge d \alpha \equiv 0$$
and in this case $\xi$ is called \emph{integrable}. An important example of a foliation is the Reeb foliation.
\begin{ex}[Reeb foliation]
Consider $D^2 \times S^1$ with coordinates $\left((r,\theta), \phi\right)$. Choose a non-negative function $\gamma(r)$ on $[0,1]$ that is infinitely tangent to a constant map at the end points, is decreasing on the interior and has $\gamma(0) = 1,\gamma(1) = 0$. Then $\mathcal{F}_{Reeb}$ is defined as the kernel of the following form
$$\alpha = \gamma(r) \thinspace d \phi + (1-\gamma(r)) dr.$$
This foliation has a unique compact leaf given by $\partial D^2 \times S^1$ and the foliation on $int(D^2) \times S^1$ is by parabolic planes. A solid torus with such a foliation will be called a \emph{Reeb component}.
\end{ex}
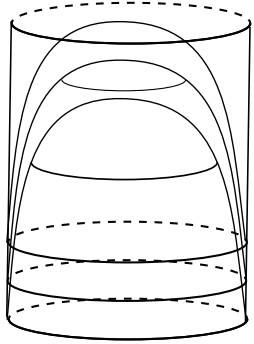
\begin{figure} 
\psset{xunit=.2pt,yunit=.2pt,runit=.2pt}
\begin{pspicture}(464.16421509,641.04284668)
{
\newrgbcolor{curcolor}{0 0 0}
\pscustom[linewidth=2.5999999,linecolor=curcolor]
{
\newpath
\moveto(2.21423,39.09284668)
\curveto(17.6073,222.67165668)(25.67513,394.17312668)(138.44584,443.53023668)
}
}
{
\newrgbcolor{curcolor}{0 0 0}
\pscustom[linewidth=2.5999999,linecolor=curcolor]
{
\newpath
\moveto(138.44584,443.53023668)
\curveto(160.2713,453.08272668)(186.01862,458.06004668)(216.49994,457.66427668)
\curveto(247.84707,457.63477668)(274.52875,452.93394668)(297.32944,444.25849668)
}
}
{
\newrgbcolor{curcolor}{0 0 0}
\pscustom[linewidth=2.5999999,linecolor=curcolor]
{
\newpath
\moveto(297.32944,444.25849668)
\curveto(426.21492,395.21879668)(431.0907,219.17672668)(453.6428,41.94998668)
\lineto(453.6428,41.94998668)
\lineto(453.6428,41.94998668)
\lineto(453.6428,41.94998668)
\lineto(453.6428,41.94998668)
\lineto(453.6428,41.94998668)
}
}
{
\newrgbcolor{curcolor}{0 0 0}
\pscustom[linewidth=3.9000001,linecolor=curcolor,linestyle=dashed,dash=15.60000038 15.60000038]
{
\newpath
\moveto(2.9285471,113.37850668)
\curveto(2.9285471,134.68091668)(103.9842671,151.94993668)(228.6428171,151.94993668)
\curveto(353.3013771,151.94993668)(454.3570971,134.68091668)(454.3570971,113.37850668)
}
}
{
\newrgbcolor{curcolor}{0 0 0}
\pscustom[linewidth=3.9000001,linecolor=curcolor]
{
\newpath
\moveto(454.3570971,113.37850668)
\curveto(454.3570971,92.07608668)(353.3013771,74.80707668)(228.6428171,74.80707668)
\curveto(103.9842671,74.80707668)(2.9285471,92.07608668)(2.9285471,113.37850668)
}
}
{
\newrgbcolor{curcolor}{0 0 0}
\pscustom[linewidth=2.5999999,linecolor=curcolor]
{
\newpath
\moveto(2.92853,111.94993668)
\curveto(21.30075,331.05827668)(29.23791,532.96205668)(217.21424,530.52136668)
\curveto(425.7573,530.32481668)(427.81539,323.38638668)(454.3571,114.80707668)
\lineto(454.3571,114.80707668)
\lineto(454.3571,114.80707668)
\lineto(454.3571,114.80707668)
\lineto(454.3571,114.80707668)
\lineto(454.3571,114.80707668)
}
}
{
\newrgbcolor{curcolor}{0 0 0}
\pscustom[linewidth=3.9000001,linecolor=curcolor,linestyle=dashed,dash=15.60000038 15.60000038]
{
\newpath
\moveto(4.357094,186.23558668)
\curveto(4.357094,207.53799668)(105.412814,224.80701668)(230.071364,224.80701668)
\curveto(354.729924,224.80701668)(455.785644,207.53799668)(455.785644,186.23558668)
}
}
{
\newrgbcolor{curcolor}{0 0 0}
\pscustom[linewidth=3.9000001,linecolor=curcolor]
{
\newpath
\moveto(455.785644,186.23558668)
\curveto(455.785644,164.93316668)(354.729924,147.66415668)(230.071364,147.66415668)
\curveto(105.412814,147.66415668)(4.357094,164.93316668)(4.357094,186.23558668)
}
}
{
\newrgbcolor{curcolor}{0 0 0}
\pscustom[linewidth=2.5999999,linecolor=curcolor]
{
\newpath
\moveto(1.499934,184.80701668)
\curveto(19.872154,403.91535668)(27.809314,605.81913378)(215.785644,603.37844378)
\curveto(424.328704,603.18189168)(426.386794,396.24346668)(452.928504,187.66415668)
\lineto(452.928504,187.66415668)
\lineto(452.928504,187.66415668)
\lineto(452.928504,187.66415668)
\lineto(452.928504,187.66415668)
\lineto(452.928504,187.66415668)
}
}
{
\newrgbcolor{curcolor}{0 0 0}
\pscustom[linewidth=3,linecolor=curcolor]
{
\newpath
\moveto(1.5,44.80713668)
\lineto(10.07143,604.80713668)
\lineto(10.07143,604.80713668)
}
}
{
\newrgbcolor{curcolor}{0 0 0}
\pscustom[linewidth=3.9000001,linecolor=curcolor,linestyle=dashed,dash=15.60000038 15.60000038]
{
\newpath
\moveto(5.071394,40.52141668)
\curveto(5.071394,61.82382668)(106.127114,79.09284668)(230.785664,79.09284668)
\curveto(355.444224,79.09284668)(456.499944,61.82382668)(456.499944,40.52141668)
}
}
{
\newrgbcolor{curcolor}{0 0 0}
\pscustom[linewidth=3.9000001,linecolor=curcolor]
{
\newpath
\moveto(456.499944,40.52141668)
\curveto(456.499944,19.21899668)(355.444224,1.94998668)(230.785664,1.94998668)
\curveto(106.127114,1.94998668)(5.071394,19.21899668)(5.071394,40.52141668)
}
}
{
\newrgbcolor{curcolor}{0 0 0}
\pscustom[linewidth=3.9000001,linecolor=curcolor]
{
\newpath
\moveto(456.499944,40.52141668)
\curveto(456.499944,19.21899668)(355.444224,1.94998668)(230.785664,1.94998668)
\curveto(106.127114,1.94998668)(5.071394,19.21899668)(5.071394,40.52141668)
}
}
{
\newrgbcolor{curcolor}{0 0 0}
\pscustom[linewidth=3.9000001,linecolor=curcolor,linestyle=dashed,dash=15.60000038 15.60000038]
{
\newpath
\moveto(10.78568,600.52141668)
\curveto(10.78568,621.82382668)(111.8414,639.09284668)(236.49995,639.09284668)
\curveto(361.15851,639.09284668)(462.21423,621.82382668)(462.21423,600.52141668)
}
}
{
\newrgbcolor{curcolor}{0 0 0}
\pscustom[linewidth=3.9000001,linecolor=curcolor]
{
\newpath
\moveto(462.21423,600.52141668)
\curveto(462.21423,579.21899668)(361.15851,561.94998668)(236.49995,561.94998668)
\curveto(111.8414,561.94998668)(10.78568,579.21899668)(10.78568,600.52141668)
}
}
{
\newrgbcolor{curcolor}{0 0 0}
\pscustom[linewidth=3.9000001,linecolor=curcolor]
{
\newpath
\moveto(462.21423,600.52141668)
\curveto(462.21423,579.21899668)(361.15851,561.94998668)(236.49995,561.94998668)
\curveto(111.8414,561.94998668)(10.78568,579.21899668)(10.78568,600.52141668)
}
}
{
\newrgbcolor{curcolor}{0 0 0}
\pscustom[linewidth=3,linecolor=curcolor]
{
\newpath
\moveto(452.92857,33.37856668)
\lineto(461.5,593.37856668)
\lineto(461.5,593.37856668)
}
}
{
\newrgbcolor{curcolor}{0 0 0}
\pscustom[linewidth=3.19023731,linecolor=curcolor]
{
\newpath
\moveto(399.78496043,337.60051689)
\curveto(399.78496043,319.33460284)(320.92343417,304.52716536)(223.64280223,304.52716536)
\curveto(126.36217809,304.52716536)(47.50065184,319.33460284)(47.50065184,337.60051689)
}
}
{
\newrgbcolor{curcolor}{0 0 0}
\pscustom[linewidth=2.07433951,linecolor=curcolor]
{
\newpath
\moveto(339.91015581,493.56169332)
\curveto(339.91015581,481.9545679)(287.44202296,472.54514094)(222.71929486,472.54514094)
\curveto(157.99657196,472.54514094)(105.52843911,481.9545679)(105.52843911,493.56169332)
}
}
\end{pspicture} \caption{A piece of the Reeb foliation}\end{figure}
General foliations are very flexible - they satisfy an $h$-principal due to Wood \cite{Wood} - and in particular every $2$-plane field is homotopic to the tangent distribution of a foliation. A more geometrically significant class of foliations are those that are \emph{taut}. Here a foliation is taut if every leaf admits a closed transversal. Note that any foliation that contains a Reeb component is not taut, since the boundary leaf of the Reeb component is separating and compact. Thus taut foliations fall into the more general class of \emph{Reebless} foliations, i.e.\ those that contain no Reeb component. The existence of a Reebless foliation puts restrictions on the topology of $M$ due to the following theorem that is usually attributed Novikov, although the statement about incompressibility may be due to Thurston.
\begin{thm}[Novikov]
Let $\mathcal{F}$ be a Reebless foliation on a $3$-manifold. Then all leaves of $\mathcal{F}$ are incompressible, $\pi_2(M) = 0$ and all transverse loops are essential in $\pi_1(M)$. In particular, $\pi_1(M)$ is infinite.
\end{thm}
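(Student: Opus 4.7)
The plan is to prove all three main conclusions by contrapositive, showing that the failure of any one of them forces $\mathcal{F}$ to contain a Reeb component; the infiniteness of $\pi_1(M)$ will then drop out as a consequence. Indeed, a Reebless foliation on a closed $3$-manifold always admits a closed transversal, and if $\pi_1(M)$ were finite some power of such a transversal would be null-homotopic, contradicting the third assertion. Thus the task reduces to proving the statements about leaves, $\pi_2$, and transverse loops in parallel.

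The technical core of the argument is Novikov's notion of a \emph{vanishing cycle}: a continuous one-parameter family of loops $\gamma_t$ ($t \in [0,1]$) each lying in a leaf $L_t$, such that $\gamma_t$ bounds an embedded disk in $L_t$ for every $t > 0$ while $\gamma_0$ is essential in $\pi_1(L_0)$. The first step of the proof is to extract such a vanishing cycle from each of the three hypothetical defects. In the case of a compressible leaf one starts from an essential loop on a leaf that bounds a disk in $M$; in the case $\pi_2(M)\ne 0$ one starts from an essential sphere $S^2 \to M$; and in the case of a null-homotopic transverse loop one starts from a spanning disk whose boundary is transverse to $\mathcal{F}$. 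Put the given map in general position with $\mathcal{F}$, so that the pulled-back plane field becomes a singular foliation on the disk or sphere with only Morse-type tangency singularities, and run an innermost-loop argument on this singular foliation to produce a vanishing cycle. This reduction is classical and entirely two-dimensional (compare Calegari, Chapter~4).

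The second and decisive step is to upgrade a vanishing cycle to a Reeb component. Choose a short arc $\tau$ transverse to $\mathcal{F}$ based at a point of $\gamma_0$. The bounding disks $D_t\subset L_t$ together with the family $\gamma_t$ determine a germ of holonomy for $L_0$ on one side along $\gamma_0$, and the collapse of the $D_t$ forces this germ to fix an entire initial sub-arc of $\tau$ pointwise. Saturating the invariant arc by leaves and analysing the induced one-dimensional transverse dynamics then yields, after careful smoothing, a torus leaf $T$ bounding a solid-torus-like region on one side that is foliated by planes spiralling onto $T$, which is precisely a Reeb component. This contradicts Reeblessness and completes the contrapositive in all three cases.

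The main obstacle is the second step: showing that a purely topological vanishing cycle produces a \emph{genuinely} invariant transverse interval, that the saturation of this interval closes up to a compact torus leaf with the right holonomy, and that the bounded complementary region is filled by planes spiralling onto the boundary torus. This is Novikov's closed-leaf theorem, whose proof is delicate and uses the three-dimensionality of $M$ in an essential way. Granting it, the three main conclusions follow, and the infiniteness of $\pi_1(M)$ is then immediate as explained above.
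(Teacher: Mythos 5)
The paper does not prove this statement at all: it is quoted as a classical theorem of Novikov, so there is no in-text argument to compare yours against. Your outline is the standard one --- extract a vanishing cycle from each hypothetical defect by putting the relevant map (compressing disk, essential sphere, or spanning disk of a null-homotopic transversal) in general position and running an innermost-loop argument on the induced singular foliation, then upgrade the vanishing cycle to a Reeb component via the closed leaf theorem --- and the reduction of the infiniteness of $\pi_1(M)$ to the assertion about transverse loops is correct, since any foliation of a closed manifold carries a closed transversal (close up a recurrent orbit of a transverse flow). Your description of the mechanism in the second step is also accurate: the closed curves $\gamma_t \subset L_t$ are exactly the holonomy lifts of $\gamma_0$, so their existence says the one-sided holonomy along $\gamma_0$ fixes an initial segment of the transverse arc pointwise.

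Two caveats. First, the decisive step --- that a vanishing cycle forces a Reeb component --- is asserted rather than proved, and since this is essentially the entire content of Novikov's theorem, what you have is a faithful outline of the classical proof rather than a proof; everything difficult is concentrated in the step you grant. Second, the clause $\pi_2(M)=0$ is where the argument genuinely breaks as literally stated: the product foliation on $S^2\times S^1$ is Reebless in the sense of this paper, yet $\pi_2\neq 0$, and correspondingly no vanishing cycle can be extracted from an essential sphere there because every loop in a spherical leaf already bounds a disk in that leaf. The precise statement of Novikov's theorem either excludes sphere (and projective plane) leaves or allows the alternative that $M$ is covered by $S^2\times S^1$ with the product foliation; the paper's formulation silently carries this standard caveat, and your innermost-loop argument in the $\pi_2$ case needs the corresponding hypothesis to terminate in a loop that is essential in its leaf.
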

\noindent It follows from Novikov's theorem that a foliation is Reebless if and only if all its torus leaves are incompressible. We also have the following criterion for tautness, which follows from Novikov's notion of dead end components combined with the Poincar\'e-Hopf Theorem.
\begin{thm}\label{Goodman}
Let $\mathcal{F}$ be a foliation on a $3$-manifold $M$. If no oriented combination of torus leaves of $\mathcal{F}$ is null-homologous in $H_2(M)$, then $\mathcal{F}$ is taut.
\end{thm}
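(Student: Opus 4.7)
My plan is to prove the contrapositive: supposing $\mathcal{F}$ is not taut, I will produce a non-trivial oriented combination of torus leaves that is null-homologous in $H_2(M)$. The strategy is to extract a \emph{dead-end component} $N \subset M$ from the failure of tautness and read off the combination as $\partial N$.

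Starting from a leaf $L_0$ admitting no closed transversal, let $\mathcal{D}$ be the closure of the union of all leaves of $\mathcal{F}$ through which no closed transversal passes; this is a compact saturated set. Using a flow-box and transverse vector-field argument, together with Novikov's theorem to control how non-compact leaves accumulate in codimension one, I would show that the transverse frontier of $\mathcal{D}$ in $M \setminus \mathcal{D}$ is a finite disjoint union of compact leaves of $\mathcal{F}$. Cutting off the ``bad'' side along this frontier produces a compact codimension-zero submanifold $N \subset M$ whose boundary is a union of compact leaves, and on which the interior leaves of $\mathcal{F}|_{\text{int}(N)}$ accumulate one-sidedly onto every component of $\partial N$.

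The crux of the argument is to show that each component of $\partial N$ is a torus. Since $\mathcal{F}|_N$ is a codimension-one foliation tangent to $\partial N$, the Poincar\'e--Hopf formula for foliated manifolds gives $\chi(\partial N) = 0$. To pin down each component individually, I would exploit the fact that the one-sided accumulation of interior leaves on a boundary leaf $L$ forces the holonomy representation $\pi_1(L) \to \text{Diff}_+(\mathbb{R},0)$ to be non-trivial on the $N$-side. A sphere component of $\partial N$ is then ruled out by Reeb's stability theorem (it would force $M = S^2 \times S^1$ with its taut product foliation), and a component of genus $\geq 2$ is ruled out by combining the transverse invariant measure on $L$ coming from the accumulating interior leaves with Thurston-stability type rigidity for the non-identity-tangent holonomy in the presence of a hyperbolic $\pi_1(L)$. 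Hence every component of $\partial N$ is a torus $T_i$.

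Once this is established, orienting each $T_i$ as part of $\partial N$ yields, in $H_2(M;\mathbb{Z})$,
\[ \sum_i [T_i] \;=\; [\partial N] \;=\; 0, \]
since $N$ realises a $3$-chain with boundary $\partial N$. As $\partial N$ is non-empty (the degenerate case $N = M$, in which $\mathcal{F}$ admits no closed transversal anywhere, is handled separately via the structure theory of such foliations), this gives a genuinely non-trivial null-homologous combination of torus leaves, proving the contrapositive. I expect the main obstacle to be the torus step: translating the dynamical input of one-sided accumulation and non-trivial holonomy into a topological constraint on the genus of each boundary leaf is the delicate heart of Goodman's argument, and one must also be careful that the frontier of $\mathcal{D}$ is genuinely a union of compact leaves rather than degenerating into an exceptional minimal set.
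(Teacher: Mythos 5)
The paper does not prove this statement; it is quoted as a classical theorem attributed to Goodman, so there is no in-paper argument to compare against. Your outline is essentially the standard (and, I believe, Goodman's original) proof via dead-end components, and the overall architecture is sound: pass to the contrapositive, extract a compact saturated submanifold $N$ whose boundary is a union of compact leaves along which the coorientation points inward, show each boundary component is a torus, and conclude $\sum_i [T_i] = [\partial N] = 0$ in $H_2(M)$.

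Two corrections on the details. First, the point you flag as delicate --- that the frontier consists of compact leaves rather than degenerating into an exceptional minimal set --- is settled by one clean lemma: any leaf that accumulates on itself admits a closed positive transversal (close up a short positive transverse arc between two nearby sheets of the same leaf by a tangential path and smooth the corners), and any proper non-compact leaf does as well (it accumulates on a limit leaf, near which infinitely many of its own sheets pile up on one side). Hence \emph{every} leaf through which no closed transversal passes is compact, and the frontier of your set $\mathcal{D}$, being made of such leaves, is a union of compact leaves; Novikov's theorem is not really the relevant input here. Second, your proposed use of Thurston-stability-type rigidity to exclude genus $\geq 2$ would not work: Thurston stability concerns leaves whose one-sided holonomy is tangent to the identity, whereas the boundary leaves of a dead-end component have nontrivial, attracting-type holonomy on the $N$-side, so the hypothesis fails exactly where you would want to apply it. Fortunately that step is unnecessary. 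Since the coorientation points into $N$ along every component of $\partial N$, a positively transverse vector field restricted to $N$ is nonvanishing and inward-pointing, so $\chi(N) = 0$ by Poincar\'e--Hopf and hence $\chi(\partial N) = 2\chi(N) = 0$. Sphere components are excluded by Reeb global stability (a sphere leaf would force $M = S^2 \times S^1$ with the taut product foliation), so every component of $\partial N$ satisfies $\chi \leq 0$, and the vanishing of the total Euler characteristic then forces every component to be a torus. With these adjustments the argument goes through.
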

It will be important to modify foliations in various situations below and we will repeatedly make use of a spinning construction which introduces toral leaves into foliations that are transverse to an embedded torus.
\begin{constr}[Spiralling along a torus]\label{Spiral}
Let $\mathcal{F}$ be a foliation on a manifold  obtained by cutting a closed manifold $M$ open along an embedded torus
$$\overline{M} =  M \setminus T^2 \times (-\epsilon,\epsilon)$$
and assume that $\mathcal{F}$ is transverse on the boundary components $T_{-},T_{+}$ of $\overline{M}$. We furthermore assume that $\mathcal{F}$ is linear on the boundary so that it is given as the kernel of closed $1$-forms $\alpha_{-}$ and $\alpha_{+}$ respectively. Letting $z$ be the normal coordinate on $T^2 \times (-\epsilon,\epsilon)$ we then define a foliation as the kernel of the following $1$-form
\[\alpha = \rho(-z)\alpha_{-} + \rho(z)\alpha_+ + (1-\rho(|z|))dz.\]
Here $\rho$ is a non-decreasing function that is positive for $z > 0$, satisfies $\rho(z) = 1$ near $\epsilon$ and is identically zero otherwise so that $\rho$ vanishes to infinite order at the origin. 
\end{constr}
\begin{figure}\input{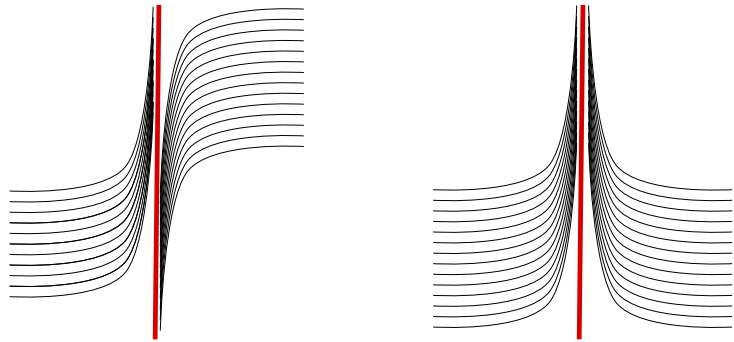}\caption{A cross-section of the foliation obtained after spiralling. The left hand figure shows the unstable case $\alpha_+ = \alpha_-$ and the right hand figure shows the example $\alpha_+ = - \alpha_-$, which is then stable. The torus leaf is represented by the thick (red) line.}\end{figure}
\noindent Note that spiralling along an embedded torus $T$ has the effect of introducing a closed torus leaf. Furthermore, if we consider the foliation given by cutting open a manifold along an embedded torus transverse to a foliation $\mathcal{F}$ such that the induced foliation on $T$ is linear, then we take $\alpha_{+} = \alpha_{-}$ so that foliation obtained by spiralling can be obtained through a smooth 1-parameter deformation of foliations. In this case we will say that the resulting foliation is obtained from $\mathcal{F}$ by \emph{spinning} along the torus $T$. If the induced foliation $\mathcal{F}|_T$ on $T$ is not linear and is without $2$-dimensional Reeb components, then one can still spin along $T$ to obtain a foliation that is only of class $C^0$ as long as we choose the direction of spinning to be transverse to $\mathcal{F}|_T$. Finally observe that if $T$ is a compressible torus given as the boundary of a tubular neighbourhood of a closed transversal, then spiralling along $T$ has the effect of introducing a Reeb component having $T$ as a closed leaf. In this case spinning along $T$ corresponds to \emph{turbulisation}. This in particular shows that Reeblessness and hence tautness are not deformation invariants of foliations.

\subsection{Contact structures:} In addition to foliations we will also consider totally non-integrable plane fields or \emph{contact structures}. Here a contact structure $\xi$ is a distribution such that $\alpha \wedge d \alpha$ is nowhere zero for any defining $1$-form with $\xi = \text{Ker}(\alpha)$. Unless specified our contact structures will always be \emph{positive} with respect to the orientation on $M$ so that $\alpha \wedge d \alpha >0$. If $\alpha$ only satisfies the weaker inequality $\alpha \wedge d \alpha \geq 0$, then $\xi$ is called a (positive) \emph{confoliation}.

There is a fundamental classification of contact structures into those that are tight and those that are not.
\begin{defn}[Overtwistedness]
A contact structure $\xi$ on manifold $M$ is called \textbf{overtwisted} if it admits an embedded disc $D \hookrightarrow M$ such that
$$TD|_{\partial D} = \xi |_{\partial D}.$$
\end{defn}
\noindent If a contact structure $\xi$ admits no such disc then it is called \emph{tight}. A contact structure is \emph{universally tight} if its pullback to the universal cover $\widetilde{M} \to M$ is tight.

\subsection{Topology on the space of plane fields:}
We wish to approximate foliations by contact structures. For this we consider plane fields as sections of the oriented Grassmann bundle of $M$, which can be identified with the unit cotangent bundle $ST^*M$ after a choice of metric. We then say that two plane fields are $C^k$-close, if they $C^k$-close as sections of this bundle. In the context of approximating foliations by contact structures it is most natural to consider the tangent distribution of a foliation rather than the foliation itself. In view of this we will speak about convergence of sequence of foliations $\{\mathcal{F}_n\}$ in the strong sense that $T \mathcal{F}_n$ converges in the $C^k$-topology. In particular, a $C^0$-foliation will be a foliation that is tangent to a continuous $2$-plane field.


\section{Seifert manifolds and horizontal foliations}\label{Seifert_man}
\subsection{Seifert manifolds:}
A Seifert manifold is a closed 3-manifold that admits a locally free $S^1$-action. These manifolds are well understood and can all be built using the following recipe: Let $R$ be an oriented, compact, connected surface (with boundary) of genus $g$ and let $R_i = \partial_i R$ for $ 0 \leq i \leq r$ denote its oriented boundary components. We then obtain a Seifert manifold by gluing solid tori $W_i = D^2 \times S^1$ to the $i$-th boundary component of $R \times S^1$ in such a way that the oriented meridian $m_i = \partial D^2$ maps to $-\alpha_i[R_i] + \beta_i[S^1]$ in homology, where $S^1$ is oriented to intersect $R$ positively and $\alpha_i \neq 0$. 

The obvious $S^1$-action on $R\times S^1$ extends to a locally free $S^1$-action on $M$ in a natural way and the numbers $(g,\frac{\beta_0}{\alpha_0},...,\frac{\beta_r}{\alpha_r})$ are called the Seifert invariants of $M$. This $S^1$-action has a finite number of orbits that have non-trivial stabilisers, which are called \emph{exceptional fibers}. These exceptional fibers correspond to the cores of those solid tori $W_i$ for which the attaching slope $\frac{\beta_i}{\alpha_i}$ is not integral. The Seifert invariants are not unique, as one can add and subtract integers so that the sum $\sum \frac{\beta_i}{\alpha_i}$ remains unchanged to obtain equivalent manifolds. This then corresponds to a different choice of section on $R \times S^1$ with respect to which the Seifert invariants were defined. However, the Seifert invariants can be put in a normal form by requiring that $b = \frac{\beta_0}{\alpha_0} \in \mathbb{Z}$ and that
$$0 < \frac{\beta_1}{\alpha_1} \leq \frac{\beta_2}{\alpha_2} \leq ... \leq \frac{\beta_r}{\alpha_r} < 1.$$
This normal form is then unique, except for a small list of manifolds (see \cite{Hat}). Note that according to our conventions a Seifert fibered space $M$ with normalised Seifert invariants $(g,b,\frac{\beta_1}{\alpha_1},...,\frac{\beta_r}{\alpha_r})$ is an oriented manifold. The Seifert fibered space $\overline{M}$ considered with the opposite orientation has normalised Seifert invariants $ (g,-b-r,1 - \frac{\beta_1}{\alpha_1},...,1 - \frac{\beta_r}{\alpha_r})$.

\medskip

\noindent \textbf{Warning:} The conventions for Seifert manifolds differ greatly in the literature. Here we follow the conventions of \cite{Mas} and \cite{EHN}, which differ from those of \cite{Hon} and \cite{LiM}. 

Given a Seifert manifold $M$ there is a natural fiberwise branched $n$-fold covering given by quotienting out the $n$-th roots of unity $\mathbb{Z}_n \subset S^1$. The Seifert invariants of the quotient manifold can then be easily determined in terms of those of $M$ and we note this in the following proposition for future reference.
\begin{prop}[Fiberwise branched covers]\label{fibre_cov}
Let $M$ be a Seifert manifold with Seifert invariants $(g,b,\frac{\beta_1}{\alpha_1},...,\frac{\beta_r}{\alpha_r})$, where $\alpha_i,\beta_i$ are coprime. Then there is a fiber preserving branched $n$-fold covering map $M \stackrel{p} \longrightarrow M'$, where the quotient space $M'$ has (unnormalised) Seifert invariants $(g,nb,\frac{n\beta_1}{\alpha_1},...,\frac{n\beta_r}{\alpha_r})$. The branching locus of $p$ is a (possibly empty) subset of the exceptional fibers and the branching order around the $i$-th singular fiber is $gcd(n,\alpha_i)$.
\end{prop}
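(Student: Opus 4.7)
The plan is to analyze the quotient $p \colon M \to M' := M/\mathbb{Z}_n$ directly in terms of the defining handle decomposition $M = (R \times S^1) \cup W_0 \cup \cdots \cup W_r$. Since $\mathbb{Z}_n \subset S^1$ acts on $M$ by restriction of the global (locally free) $S^1$-action, the quotient $M'$ inherits a locally free $S^1/\mathbb{Z}_n \cong S^1$-action whose orbits form a Seifert fibration, and $p$ is the required fiber-preserving map. The branching locus consists precisely of those exceptional fibers of $M'$ where the stabilizer $\mathbb{Z}_{\alpha_i}$ of the full $S^1$-action meets $\mathbb{Z}_n$ non-trivially; this intersection is $\mathbb{Z}_{d_i}$ with $d_i := \gcd(n, \alpha_i)$, giving the stated branching order.

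On the regular piece, the restriction of $p$ to $R \times S^1 \to (R \times S^1)/\mathbb{Z}_n$ is, after the natural identification of the quotient with $R \times S^1$, the $n$-to-$1$ cyclic cover $(\vec r, \theta) \mapsto (\vec r, n\theta \bmod 1)$. In particular, on each boundary torus $T_i$ the induced map is the identity on the base class $[R_i]$ and multiplication by $n$ on the regular fiber class $[S^1]$. Consequently the image of the attaching curve of $W_i$ is
$$p_*[m_i] = -\alpha_i[R_i'] + n\beta_i[S^1']$$
in $H_1(T_i')$.

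The main step is relating this image to the meridian $m_i'$ of $W_i' := W_i/\mathbb{Z}_n$. Using coordinates $W_i = D^2 \times S^1_s$, the $\mathbb{Z}_n$-action takes the local form $(z,s) \mapsto (e^{2\pi i a_i/n} z, s + \alpha_i/n)$ with $a_i$ coprime to $\alpha_i$ (chosen so that the $S^1$-orbits give the correct Seifert fiber class). The meridian disk $D_i = D^2 \times \{0\}$ has a $\mathbb{Z}_n$-orbit consisting of $n/d_i$ parallel disjoint disks, each setwise stabilized by a cyclic subgroup of order $d_i$ acting on $D^2$ by rotation. Hence $p(D_i)$ is an embedded disk in $W_i'$, and $p$ restricts to an unbranched $d_i$-fold covering of circles on the boundary. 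This yields the key identity $p_*[m_i] = d_i \cdot [m_i']$ in $H_1(T_i')$.

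Combining the two calculations gives $d_i [m_i'] = -\alpha_i [R_i'] + n\beta_i [S^1']$, so $[m_i'] = -(\alpha_i/d_i)[R_i'] + (n\beta_i/d_i)[S^1']$, which corresponds to the unnormalised Seifert invariant $n\beta_i/\alpha_i$; the same reasoning at the $0$-th fiber (where $\alpha_0 = 1$, so $d_0 = 1$ and the cover is unbranched on $W_0$) yields the invariant $nb$. The only genuinely technical point is the meridian identification $p_*[m_i] = d_i[m_i']$ via the branched-cover Riemann--Hurwitz-type analysis of $D_i$; the remaining steps are straightforward homological bookkeeping.
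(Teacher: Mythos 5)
Your proof is correct and takes essentially the same route as the paper's: form the quotient $M' = M/\mathbb{Z}_n$ by the $n$-th roots of unity, observe that over $R \times S^1$ the map is the standard fiberwise $n$-fold cover, and read off the new attaching slopes from $p_*[m_i] = -\alpha_i[R_i] + n\beta_i[S^1]$. Your explicit local analysis of the meridian disk (yielding $p_*[m_i] = \gcd(n,\alpha_i)\,[m_i']$) simply fills in the step that the paper handles by noting that the divisibility $\gcd(n\beta_i,\alpha_i) = \gcd(n,\alpha_i)$ of $p_*[m_i]$ is the branching index.
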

\begin{proof}
Let $M = (R \times S^1) \cup W_0 \cup...\cup W_r$ be the decomposition associated to the description of $M$ via its Seifert invariants $(g,b,\frac{\beta_1}{\alpha_1},...,\frac{\beta_r}{\alpha_r})$. We set $M' = M/\mathbb{Z}_n$ where $\mathbb{Z}_n \subset S^1$ denotes the $n$-th roots of unity and $M \stackrel{p} \longrightarrow M'$ is the quotient map. The quotient manifold has a natural $(S^1/\mathbb{Z}_n)$-action and thus $M'$ is again Seifert fibered and the map is fiber preserving. Furthermore, the decomposition of $M$ gives a decomposition $M'=(R \times S^1) \cup W'_0 \cup...\cup W'_r$ such that the restriction of the map $p$ to $R \times S^1$ is the product of the standard $n$-fold cover $S^1 \to S^1$ with the identity on $R$. Under the covering map the meridian class $m_i = -\alpha_i[R_i] + \beta_i[S^1]$ given by the $i$-th solid torus $W_i$ maps to $-\alpha_i[R_i] + n\beta_i[S^1]$, which must then be a multiple of the meridian class $m'_i$ along which $W'_i$ is attached:
$$m'_i = \frac{-\alpha_i}{\text{gcd}(n\beta_i,\alpha_i)}[R_i] + \frac{n\beta_i}{\text{gcd}(n\beta_i,\alpha_i)}[S^1].$$  
The divisibility of $p_*(m_i)$, which is $\text{gcd}(n\beta_i,\alpha_i) = \text{gcd}(n,\alpha_i) $ since $\alpha_i,\beta_i$ are coprime, then corresponds to the branching index of $p$ over the $i$-th exceptional fiber. 
\end{proof}

\subsection{Horizontal Foliations:} We next discuss horizontal foliations on Seifert manifolds referring to \cite{EHN} for further details. Here a foliation on a Seifert fibered space is called \emph{horizontal}, if it is everywhere transverse to the fibers of the Seifert fibration. A horizontal foliation $\mathcal{F}$ on a Seifert fibered space is equivalent to a representation $\widetilde{\rho}: \pi_1(M) \to \widetilde{\text{Diff}}_+(S^1)$, such that the homotopy class of the fiber is mapped to a generator of the centre of $\widetilde{\text{Diff}}_+(S^1)$, which acts on $\mathbb{R}$ as the group of $1$-periodic diffeomorphisms. One then has $M = (\widetilde{B} \times \mathbb{R} ) / \widetilde{\rho}$, where $\widetilde{B}$ denotes the universal cover of the quotient orbifold of $M$, and the horizontal foliation on the product descends to $\mathcal{F}$. The representation $\widetilde{\rho}$ then descends to a representation of the orbifold fundamental group of the base to the ordinary diffeomorphism group $\rho: \pi^{orb}_1(B) \to \text{Diff}_+(S^1)$. 

In all but a few cases a Seifert manifold admits a horizontal foliation if and only if it admits one with holonomy in $\textit{PSL}(2,\mathbb{R})$, in the sense that the image of the holonomy map in $\rho$ lies in $\textit{PSL}(2,\mathbb{R})$. Moreover, an examination of the proof of (\cite{EHN}, Theorem 3.2) and its analogue for $\textit{PSL}(2,\mathbb{R})$-foliations shows that it is always possible to ensure that the holonomy around some embedded curve in the base is hyperbolic provided that the base has positive genus. We note this in the following proposition.
\begin{prop}[Existence of horizontal foliations \cite{EHN}]\label{Seif_fol}
Let $M$ be a Seifert fibered space whose base has genus $g$, then $M$ admits a horizontal foliation if
$$2 - 2g - r \leq -b - r \leq 2g - 2.$$
In this case the horizontal foliation can be taken so as to have holonomy in $\textit{PSL}(2,\mathbb{R})$ and so that some embedded curve in the base whose holonomy is hyperbolic. If $g>0$ then the converse also holds.
\end{prop}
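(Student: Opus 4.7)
The plan is to reduce the existence of horizontal foliations to the existence of representations of an orbifold fundamental group with prescribed boundary behaviour, and then invoke the main theorem of \cite{EHN} together with a minor refinement concerning the hyperbolic nature of the holonomy. By the dictionary recalled just before the proposition, a horizontal foliation on $M$ corresponds to a representation $\widetilde{\rho}: \pi_1^{orb}(B) \to \widetilde{\text{Diff}}_+(S^1)$ that sends the regular fiber class to the central translation by one and the meridian of the $i$-th exceptional fiber to an element with rotation number $\beta_i/\alpha_i$; the Euler number of the corresponding suspension then equals $-b - \sum \beta_i/\alpha_i$.

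For the existence direction I would simply invoke \cite{EHN}: their argument produces such a representation whenever the Gauss--Bonnet-type inequality on the base orbifold is satisfied, and this inequality rearranges to $2 - 2g - r \leq -b - r \leq 2g - 2$. Since the construction proceeds by endowing $B$ with a hyperbolic structure having prescribed cone angles, the resulting holonomy automatically takes values in $\text{PSL}(2,\mathbb{R})$, which yields the first part of the ``PSL(2,$\mathbb{R}$)'' refinement.

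To arrange for the holonomy around some embedded curve in the base to be hyperbolic when $g > 0$, I would inspect the EHN construction rather than merely cite it. When the base has positive genus the Teichm\"uller space of hyperbolic structures with the prescribed cone angles has positive dimension coming from the genus parameters, and in this space of deformations the elements of $\text{PSL}(2,\mathbb{R})$ corresponding to interior simple closed curves form an open dense subset of hyperbolic isometries. One therefore chooses the hyperbolic structure generically so that some fixed embedded curve $\gamma$ disjoint from the exceptional fibers is sent to a hyperbolic element, without disturbing the peripheral rotation numbers which are fixed by the Seifert invariants.

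For the converse when $g > 0$, a horizontal foliation produces a representation $\rho: \pi_1(R) \to \text{Diff}_+(S^1)$ of the underlying bounded surface whose Euler number is $-b - \sum \beta_i/\alpha_i$. The Milnor--Wood inequality $|e(\rho)| \leq -\chi(R) = 2g - 2 + r$, combined with the normalization $0 < \beta_i/\alpha_i < 1$ (so $0 < \sum \beta_i/\alpha_i < r$), yields the stated bounds after rearrangement. The main obstacle is the refinement ensuring hyperbolic holonomy, which requires an explicit genericity argument inside the EHN construction; the Milnor--Wood half of the converse and the bare existence half are essentially citations, but the Teichm\"uller dimension count needs $g > 0$ in an essential way and is the one place the genus hypothesis is really used.
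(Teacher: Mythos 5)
The paper does not actually prove this proposition: it is stated as a citation of \cite{EHN}, with the only added content being the remark (in the preceding paragraph) that an examination of the proof of (\cite{EHN}, Theorem 3.2) and its $\text{PSL}(2,\mathbb{R})$-analogue lets one arrange hyperbolic holonomy around an embedded curve when $g>0$. Your proposal is therefore broadly in the same spirit (cite \cite{EHN} for the hard parts), but the one step you work out in detail --- the converse for $g>0$ --- contains a genuine error. The condition $2-2g-r\leq -b-r\leq 2g-2$ is a constraint on the integer $b$ alone, namely $2-2g\leq -b\leq 2g-2+r$, independent of the values $\beta_i/\alpha_i$; no inequality on the Seifert Euler number $e(M)=-b-\sum\beta_i/\alpha_i$ can be equivalent to it. Concretely, from $|{-b}-\sum\beta_i/\alpha_i|\leq 2g-2+r$ and $0<\sum\beta_i/\alpha_i<r$ one only gets $2-2g-r<-b<2g-2+2r$, which is weaker by $r$ on each side; e.g.\ for $g=1$, $r=2$, $\beta_1/\alpha_1=\beta_2/\alpha_2=1/2$ and $b=1$ one has $e(M)=-2$ and $|e(M)|=2=-\chi(R)$, so your Milnor--Wood bound is satisfied, yet $-b=-1<0=2-2g$ violates the stated criterion. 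The correct converse is the sharper inequality of \cite{EHN}: the holonomy $f_i$ around each exceptional fiber is forced to be conjugate to a rigid rotation by $\beta_i/\alpha_i\in(0,1)$, so its preferred lift satisfies $x<\tilde f_i(x)<x+1$ pointwise; hence the element $\mathrm{Tr}_{-b}\,\tilde f_r^{-1}\cdots\tilde f_1^{-1}$, which must be a product of $g$ commutators, has displacement in $(-b-r,-b)$ at every point, and Wood's sharp characterisation of products of $g$ commutators then bounds $b$ itself. In other words, the relevant ``Milnor--Wood'' is the one for the relative Euler number $-b$ with respect to these canonical lifts, not for $e(M)$.

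Two smaller points. First, your description of the existence direction as ``endowing $B$ with a hyperbolic structure having prescribed cone angles'' is not how \cite{EHN} proceeds and cannot work in general: a hyperbolic cone structure would force $e(M)=\pm\chi^{orb}(B)$ (the Fuchsian case), whereas the criterion allows a whole range of $b$; the \cite{EHN} construction instead builds the elements of $\widetilde{\text{PSL}}(2,\mathbb{R})$ combinatorially. Consequently your Teichm\"uller-space genericity argument for making the holonomy around an embedded curve hyperbolic does not apply as stated; what one actually does (and what the paper alludes to) is observe that in the \cite{EHN} construction the images of the genus generators can be chosen freely enough that, say, $\rho(a_1)$ is hyperbolic, with the single commutator $[\rho(a_1),\rho(b_1)]$ still realising whatever element the relation requires. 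The conclusion you want is correct, but the mechanism needs to be read off from the actual construction rather than from a deformation of hyperbolic structures.
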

\noindent Thus in most cases the existence of a horizontal foliation on $M$ is the same as the existence of a flat connection on $M$ thought of as an orbifold $\textit{PSL}(2,\mathbb{R})$-bundle. In the case of genus zero, one has slightly more elaborate criteria for the existence of a $\textit{PSL}(2,\mathbb{R})$-foliation.
\begin{thm}[\cite{JN}, Theorem 1]\label{genus_zero_hor}
Let $M$ be a Seifert manifold with normalised invariants $(0,b,\frac{\beta_1}{\alpha_1},,...,\frac{\beta_r}{\alpha_r})$. Then $M$ admits a horizontal foliation with holonomy in $\textit{PSL}(2,\mathbb{R})$ if and only if one of the following holds:
\begin{itemize}
\item $2 - r \leq -b - r \leq - 2$
\item $b = -1$ and $\sum_{i=1}^r \frac{\beta_i}{\alpha_i} \leq 1$ or $b = 1-r$ and $\sum_{i=1}^r \frac{\beta_i}{\alpha_i} \geq r-1$.
\end{itemize}
\end{thm}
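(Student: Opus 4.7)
The plan is to translate the existence question into one about representations of the orbifold fundamental group of the base, and then to apply the classical theory of rotation numbers in $\widetilde{\text{PSL}(2,\mathbb{R})}$. By the discussion preceding the statement, a horizontal $\text{PSL}(2,\mathbb{R})$-foliation on $M$ is the same as a homomorphism $\widetilde\rho:\pi_1(M)\to \widetilde{\text{PSL}(2,\mathbb{R})}$ sending the fiber class to the generator $z$ of the centre (which acts on $\mathbb{R}$ as translation by $1$). Such $\widetilde\rho$ descends to a representation of the orbifold fundamental group of $B=S^2(\alpha_1,\dots,\alpha_r)$, which has the presentation
$$\pi_1^{orb}(B)=\langle x_1,\dots,x_r \mid x_1^{\alpha_1}=\cdots=x_r^{\alpha_r}=x_1\cdots x_r=1\rangle.$$
Lifting the $x_i$ to elements $\widetilde x_i\in \widetilde{\text{PSL}(2,\mathbb{R})}$, the defining relations become $\widetilde x_i^{\alpha_i}=z^{\beta_i}$ and $\widetilde x_1\cdots\widetilde x_r=z^{-b}$.

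The next step is to read off the consequences for rotation numbers. Since rotation number is a homogeneous quasi-morphism and $z$ has rotation number $1$, the first family of relations forces $\text{rot}(\widetilde x_i)=\beta_i/\alpha_i\in(0,1)$, while the global relation forces $\text{rot}(\widetilde x_1\cdots\widetilde x_r)=-b$. The problem therefore reduces to the following purely group-theoretic question: for which values of $-b$ can one choose elements $\widetilde x_i\in\widetilde{\text{PSL}(2,\mathbb{R})}$ with prescribed fractional rotation numbers $\beta_i/\alpha_i$ such that their product has rotation number exactly $-b$? For a single factor with $\text{rot}\in(0,1)$ the lift is elliptic and one is free to choose its axis in $\mathbb{H}^2$; this flexibility is the source of all constructions.

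The hard part is to pin down exactly which product rotation numbers are realisable as the elliptic axes vary. A coarse Milnor-Wood estimate, coming from the fact that rotation number is a quasi-morphism of defect $1$, already yields
$$-r<\text{rot}(\widetilde x_1\cdots\widetilde x_r)-\sum_i \frac{\beta_i}{\alpha_i}<0,$$
which quickly gives the open strict inequality $2-r\le -b-r\le-2$ as both a necessary condition and, when strict, a sufficient one (one constructs an explicit Fuchsian representation by general position of axes). The genuinely delicate step is the two boundary cases $b=-1$ and $b=1-r$, where the estimate must be replaced by a careful analysis of what happens when elliptic factors are allowed to degenerate into parabolics: by concentrating axes appropriately one can push the product's rotation number all the way to the extremal value $-1$ precisely when $\sum\beta_i/\alpha_i\le 1$, and the symmetric argument treats $b=1-r$.

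The converse direction then says that if the numerical constraints are violated, no admissible configuration of axes can realise $-b$, because the quasi-morphism inequality above is an honest obstruction together with the analysis of parabolic limits. This rotation-number analysis is the actual content of Jankins-Neumann's theorem (with one remaining case completed by Naimi); once it is established, the orbifold translation above reduces the theorem to a direct reading-off of the admissible values of $-b$, giving precisely the two alternatives stated.
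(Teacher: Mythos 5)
The paper does not actually prove this statement; it is quoted directly from Jankins--Neumann and the ``proof'' is the citation \cite{JN}, so there is no argument in the paper to compare yours against. Your overall framework --- translate the existence of a horizontal $\text{PSL}(2,\mathbb{R})$-foliation into the existence of lifts $\widetilde x_i\in\widetilde{\text{PSL}}(2,\mathbb{R})$ with $\widetilde x_i^{\alpha_i}=z^{\beta_i}$ and $\widetilde x_1\cdots\widetilde x_r=z^{-b}$, hence into a question about rotation numbers of products of elliptics --- is correct and is indeed how \cite{JN} proceed. However, the one quantitative step you actually write down is wrong. The inequality $-r<\text{rot}(\widetilde x_1\cdots\widetilde x_r)-\sum_i\beta_i/\alpha_i<0$ is not a consequence of the quasi-morphism property and is false: for lifts of rotations about a common centre the difference is exactly $0$, and in the realizable endpoint case $b=-1$, $\sum_i\beta_i/\alpha_i<1$ (e.g.\ the $(2,3,7)$-triangle data) one has $\text{rot}(\widetilde x_1\cdots\widetilde x_r)-\sum_i\beta_i/\alpha_i=1-\sum_i\beta_i/\alpha_i>0$, so your ``obstruction'' would exclude configurations the theorem asserts exist. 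For the same reason your claim that $2-r\le -b-r\le -2$ is a \emph{necessary} condition is false --- the second bullet of the statement consists precisely of realizable cases outside that range. The correct coarse estimate comes from the pointwise bounds $x<\widetilde x_i(x)<x+1$, valid for the lift of an elliptic with rotation number in $(0,1)$; composing gives $x<\widetilde x_1\cdots\widetilde x_r(x)<x+r$ and hence $0<-b<r$, i.e.\ $1-r\le b\le -1$. This is genuinely necessary but strictly weaker than your first bullet, and it leaves both endpoint cases open.

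More fundamentally, everything beyond that coarse estimate --- that every $b$ with $2-r\le b\le -2$ is realizable for arbitrary $\beta_i/\alpha_i$, and that the endpoints $b=-1$ and $b=1-r$ are realizable exactly when $\sum_i\beta_i/\alpha_i\le 1$, resp.\ $\ge r-1$ --- is disposed of by the sentence ``this rotation-number analysis is the actual content of Jankins--Neumann's theorem.'' That is circular: the theorem you were asked to prove \emph{is} Jankins--Neumann's Theorem 1. To close the gap one needs the exact determination of the set of rotation numbers achievable by a product of two elliptics with prescribed rotation numbers of their lifts, followed by an induction on $r$; that computation is the heart of \cite{JN} and is entirely absent from your sketch. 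Two smaller points: Naimi's contribution completes the general realizability condition for $\text{Homeo}_+(S^1)$-holonomy (the third bullet of Theorem \ref{hor_cond}), not the $\text{PSL}(2,\mathbb{R})$ statement here, which \cite{JN} prove in full; and elliptic elements of $\text{PSL}(2,\mathbb{R})$ have fixed centres in $\mathbb{H}^2$ rather than axes.
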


\section{Perturbing foliations}\label{perturb_fol}
In their book on confoliations, Thurston and Eliashberg showed how to perturb foliations to contact structures. In its most general form, their theorem shows that any $2$-dimensional foliation $\mathcal{F}$ that is not the product foliation on $S^2 \times S^1$ can be $C^0$-approximated by both positive and negative contact structures. Under additional assumptions on the holonomy of the foliation this perturbation can actually be realised as a \textbf{deformation}. That is, there is a smooth family $\xi_t$ of plane fields, such that $\xi_0$ is the tangent plane field of $\mathcal{F}$ and $\xi_t$ is contact for all $t > 0$. Moreover, if every closed leaf has linear holonomy or if the foliation is minimal with some holonomy, then $\mathcal{F}$ can be linearly deformed to a contact structure. Here a linear deformation is a $1$-parameter family of 1-forms $\alpha_t$ such that $Ker(\alpha_0) = T\mathcal{F}$ and
\[\frac{d}{dt}\alpha_t \wedge d \alpha_t \bigg|_{t=0} > 0.\]
This latter condition is then equivalent to the existence of a 1-form $\beta$ such that
\[\langle \alpha, \beta \rangle  =  \alpha \wedge d \beta + \beta \wedge d \alpha > 0.\]
Note further that
\[\langle f\alpha, f\beta \rangle = f^2\langle \alpha, \beta \rangle\]
so that the condition of being linearly deformable depends only on the foliation and not on the particular choice of defining $1$-form.
\begin{thm}[Eliashberg-Thurston \cite{ETh}]\label{spec_def}
Let $\mathcal{F}$ be a $C^2$-foliation that is not without holonomy.
\begin{enumerate}
\item If all closed leaves admit some curve with attracting holonomy. Then $T\mathcal{F}$ can be smoothly deformed to a positive resp.\ negative contact structure. 
\item If all closed leaves have linear holonomy, then this deformation can be chosen to be linear.
\end{enumerate}
\end{thm}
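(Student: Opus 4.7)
The task is to construct, for $t \in [0,\varepsilon)$, a smooth family $\alpha_t$ of defining $1$-forms such that $\alpha_0 = \alpha$ is a defining form for $\mathcal{F}$ and $\alpha_t \wedge d\alpha_t > 0$ for all $t \in (0,\varepsilon)$. Writing $\alpha_t = \alpha + t\beta + O(t^2)$ and using $\alpha \wedge d\alpha = 0$, one computes $\alpha_t \wedge d\alpha_t = t\langle\alpha,\beta\rangle + O(t^2)$, so the problem reduces to producing a $1$-form $\beta$ with $\langle\alpha,\beta\rangle \geq 0$ globally and strictly positive where needed. For part (2) this inequality must hold pointwise on all of $M$; for part (1), higher-order corrections in $t$ provide additional flexibility.

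The main local input is a construction near each closed leaf. Let $L$ be a closed leaf and $\gamma \subset L$ a curve carrying attracting holonomy. In tubular coordinates $(x,y,z)$ around $\gamma$, with $\gamma = \{y = z = 0\}$ and $L \cap U = \{z=0\}$, one may straighten $\alpha$ so that $\alpha|_L = dz$, and the holonomy along $\gamma$ takes the form $h(z) = \lambda z + O(z^2)$ with $|\lambda| < 1$. The plan is to take $\beta = \phi(z)\, dx$ for a non-negative function $\phi$ with $\phi(0) = 0$ and $\phi'(0) > 0$; a direct computation shows that $\langle\alpha,\beta\rangle$ reduces on $L$ near $\gamma$ to a positive multiple of $\phi'(z)\, dx \wedge dy \wedge dz$, so $\langle\alpha,\beta\rangle > 0$ on a neighborhood of $\gamma$. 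In the linear holonomy case the same ansatz applies with $h$ exactly linear, and positivity holds uniformly in the normal direction.

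Next, these local $\beta$'s are patched via a partition of unity, exploiting that the condition $\langle\alpha,\beta\rangle \geq 0$ is linear, hence pointwise convex, in $\beta$. This yields a global $\beta$ that is non-negative everywhere and strictly positive near every closed leaf with holonomy. To upgrade to strict positivity on all of $M$, one invokes an Eliashberg--Thurston propagation argument: the $C^2$ hypothesis together with Sacksteder-type recurrence shows that every leaf accumulates on a closed leaf bearing non-trivial holonomy, so flow-box transport along $\mathcal{F}$ spreads the strict positivity from the local neighborhoods to the entire manifold. The main obstacle is precisely this propagation step, since a priori a leaf without holonomy could avoid the support of our local constructions; one must rely on the assumption that $\mathcal{F}$ is not without holonomy to guarantee that the dynamics of the foliation accumulate on the holonomy loci. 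For part (1), any residual failure of first-order positivity on exceptional leaves can be absorbed by an additional correction $t^2 \gamma$, chosen so that $\alpha + t\beta + t^2\gamma$ is contact for small $t > 0$. For part (2), the linearity of all holonomies allows the local constructions to be tightened so that the patched linear $\beta$ already satisfies $\langle\alpha,\beta\rangle > 0$ on $M$, yielding a purely linear deformation as required.
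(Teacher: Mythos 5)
First, note that the paper does not prove this statement: it is quoted directly from Eliashberg--Thurston \cite{ETh}, so your attempt can only be measured against their argument. Your global framework (reduce to finding $\beta$ with $\langle\alpha,\beta\rangle>0$, patch by a partition of unity using convexity of the condition, allow higher-order corrections in $t$ for part (1)) is the right skeleton, but there are two genuine errors.

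The first is the local model. With your coordinates ($x$ along $\gamma$, $z$ transverse to $L$) a curve with linear holonomy is modelled by $\alpha=dz-\mu z\,dx$, and your ansatz $\beta=\phi(z)\,dx$ gives $\alpha\wedge d\beta=(dz-\mu z\,dx)\wedge\phi'(z)\,dz\wedge dx=0$ and $\beta\wedge d\alpha=\phi(z)\,dx\wedge(-\mu\,dz\wedge dx)=0$, so $\langle\alpha,\beta\rangle\equiv 0$; it cannot reduce to a multiple of $\phi'(z)\,dx\wedge dy\wedge dz$ because neither $\alpha$ nor $d\beta$ contains a $dy$. The correct choice is $\beta=f\,dy$, dual to the leafwise direction transverse to $\gamma$: then $\beta\wedge d\alpha=\mu f\,dx\wedge dy\wedge dz$, and positivity \emph{on the closed leaf itself} comes precisely from $d\alpha|_{TL}\neq 0$, i.e.\ from non-triviality of the linear holonomy. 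This is not a cosmetic point --- it is the mechanism by which the holonomy hypothesis enters, and it explains why part (2) needs linear holonomy while part (1), where $d\alpha$ may vanish to higher order along $L$, genuinely requires a non-linear deformation rather than a $t^2$ afterthought.

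The second is the propagation step. It is false that every leaf of a $C^2$ foliation with holonomy accumulates on a \emph{closed} leaf; leaves accumulate on minimal sets, which may be exceptional or all of $M$. The hypothesis of the theorem only controls closed leaves, so you must invoke Sacksteder's theorem to produce a curve with attracting linear holonomy inside every exceptional minimal set, and treat the case of a minimal foliation with holonomy separately. Moreover the Eliashberg--Thurston propagation is not ``flow-box transport of strict positivity'': one enlarges the contact region along paths tangent to the plane field starting from the already-contact zone (and, in the linear case, transports the inequality $\langle\alpha,\beta\rangle>0$ using the holonomy action on $d\alpha$). As written, your argument would not rule out a leaf, or a whole minimal set, on which $\langle\alpha,\beta\rangle$ vanishes identically and which no correction term of the form you describe can repair.
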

\begin{rem}
Foliations without holonomy are very special and can be $C^0$-approximated by surface fibrations over $S^1$. Thus the assumption that the foliation has some holonomy can be replaced by the topological assumption that the underlying manifold does not fiber over $S^1$. Examples of manifolds which cannot fiber are non-trivial $S^1$-bundles over surfaces of genus at least $2$, or more generally Seifert fibered spaces with non-trivial Euler class and hyperbolic quotient orbifolds, and rational homology spheres.
\end{rem}

In general it is not possible to deform families of foliations to contact structures in a smooth manner. However, if a smooth family of foliations $\mathcal{F}_{\tau}$ admits linear deformations for all $\tau$ in some compact parameter space $K$, then the fact that $\langle \alpha, \beta \rangle > 0$ is an open convex condition, means that one can use a partition of unity to smoothly deform the entire family. We note this in the following proposition, which will be mainly applied when the family has no closed leaves at all.
\begin{prop}[Deformation of families]\label{fam_def}
Let $\mathcal{F}_{\tau}$ be a smooth family of foliations that is parametrised by some compact space $K$ and suppose that each foliation in the family admits a linear deformation. Then $\mathcal{F}_{\tau}$ can be smoothly deformed to a family of positive resp.\ negative contact structures $\xi^{\pm}_{\tau}$.
\end{prop}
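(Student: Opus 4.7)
The plan is to extract the hypothesized linearizing $1$-forms locally in $\tau$ and glue them into a smooth family using a partition of unity on $K$, exploiting that the positivity condition $\langle \alpha, \beta \rangle > 0$ is both open and convex in $\beta$. First I would fix a smooth family of defining $1$-forms $\alpha_\tau$ for $\mathcal{F}_\tau$, which exists because the tangent distributions vary smoothly and are cooriented (pick a smooth family of unit normals using an auxiliary metric and dualize). For each $\tau_0 \in K$ the hypothesis provides a $1$-form $\beta_{\tau_0}$ on $M$ with $\langle \alpha_{\tau_0}, \beta_{\tau_0}\rangle > 0$ pointwise; since $M$ is compact and the pairing depends continuously on $\tau$, this strict inequality persists on some open neighborhood $U_{\tau_0} \subset K$ of $\tau_0$.

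Using compactness of $K$ I would then cover $K$ by finitely many such neighborhoods $U_1,\ldots,U_N$ with corresponding $1$-forms $\beta_1,\ldots,\beta_N$, choose a smooth partition of unity $\{\phi_i\}$ on $K$ subordinate to this cover, and set
\[ \beta_\tau := \sum_{i=1}^N \phi_i(\tau)\,\beta_i. \]
Since $\langle \alpha_\tau, \cdot \rangle$ is linear (hence convex) in its second argument,
\[ \langle \alpha_\tau, \beta_\tau\rangle = \sum_i \phi_i(\tau)\,\langle \alpha_\tau, \beta_i\rangle > 0 \]
for every $\tau \in K$, as each summand with nonzero weight is strictly positive. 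Thus $\beta_\tau$ is a smooth family of linearizing $1$-forms.

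To finish, define the deformation $\alpha_\tau^t := \alpha_\tau + t\,\beta_\tau$ and expand
\[ \alpha_\tau^t \wedge d\alpha_\tau^t = t\,\langle \alpha_\tau, \beta_\tau\rangle + t^2\,\beta_\tau \wedge d\beta_\tau, \]
using $\alpha_\tau \wedge d\alpha_\tau = 0$. The leading coefficient is strictly positive on the compact set $M\times K$, so by uniform continuity there exists $t_0>0$ with $\alpha_\tau^t \wedge d\alpha_\tau^t > 0$ for all $\tau\in K$ and $t\in(0,t_0]$, yielding the desired smooth family of positive contact structures $\xi_\tau^+$. Replacing $\beta_\tau$ by $-\beta_\tau$ flips the sign of $\langle \alpha_\tau, \beta_\tau\rangle$ and produces the negative family $\xi_\tau^-$. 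The only mildly delicate point is the uniformity of $t_0$ across the parameter $\tau$, which is handled by compactness of $K$; the conceptual core is the convexity of the linearization condition, which is precisely what makes the partition-of-unity gluing produce a valid global $\beta_\tau$.
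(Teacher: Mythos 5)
Your argument is correct and is exactly the one the paper intends: the paper does not write out a proof but justifies the proposition in the preceding paragraph by noting that $\langle \alpha, \beta \rangle > 0$ is an open, convex condition, so a partition of unity on the compact parameter space $K$ glues local linearizing $1$-forms into a global smooth family. Your write-up simply makes this explicit, including the uniform choice of $t_0$ by compactness of $M \times K$, and the sign flip $\beta_\tau \mapsto -\beta_\tau$ for the negative family; all of these steps are sound.
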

\noindent Another consequence of the convexity of the linear deformation condition is that the isotopy classes of any two linear deformations determining the same orientation is unique. This is an immediate consequence of Gray's Stability Theorem and we record this fact in the following:
\begin{prop}\label{lin_def}
Any two positive, resp.\ negative linear deformations of a foliation are isotopic.
\end{prop}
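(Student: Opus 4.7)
The plan is to use the convexity of the linear-deformation condition $\langle\alpha,\beta\rangle>0$ together with Gray stability. Given two positive linear deformations $\alpha_t^0$ and $\alpha_t^1$ of $\mathcal{F}$, the defining one-forms $\alpha_0^0$ and $\alpha_0^1$ both have kernel $T\mathcal{F}$, so they differ by multiplication by a positive function $f$. Since the excerpt observes that $\langle f\alpha, f\beta \rangle = f^2\langle\alpha,\beta\rangle$ and the positive contact condition is preserved under positive rescaling, replacing $\alpha_t^1$ by $f\alpha_t^1$ does not change its status as a positive linear deformation; I may therefore assume from the outset that $\alpha_0^0=\alpha_0^1=:\alpha$.

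Next I would form the straight-line interpolation $\alpha_t^s:=(1-s)\alpha_t^0+s\alpha_t^1$ for $s\in[0,1]$. At $t=0$ one has $\alpha_0^s=\alpha$ for every $s$, so differentiating in $t$ at $t=0$ yields
\[\left.\tfrac{d}{dt}(\alpha_t^s\wedge d\alpha_t^s)\right|_{t=0}=\langle\alpha,(1-s)\dot\alpha_0^0+s\dot\alpha_0^1\rangle=(1-s)\langle\alpha,\dot\alpha_0^0\rangle+s\langle\alpha,\dot\alpha_0^1\rangle,\]
which is a convex combination of the two strictly positive volume forms coming from the original linear deformations. Hence for every $s\in[0,1]$ the one-parameter family $t\mapsto\alpha_t^s$ is itself a positive linear deformation of $T\mathcal{F}$.

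Since being a positive contact form is an open condition and $s$ ranges over the compact interval $[0,1]$, there exists $\epsilon>0$ such that $\alpha_t^s$ is a positive contact form for every $(s,t)\in[0,1]\times(0,\epsilon]$. For any such fixed $t$, the family $s\mapsto\xi_t^s:=\ker\alpha_t^s$ is then a smooth one-parameter path of contact structures on the closed manifold $M$, and Gray stability produces an ambient isotopy from $\xi_t^0$ to $\xi_t^1$, which is what was to be shown. The main obstacle, such as it is, amounts only to the initial rescaling required to align the two defining forms; once that is done, the convexity of the linear-perturbation inequality together with Gray stability handles the rest.
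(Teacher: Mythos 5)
Your proof is correct and is exactly the argument the paper intends: the paper gives no written proof beyond the remark that the statement is "a consequence of the convexity of the linear deformation condition \ldots by Gray stability," and your normalisation via $\langle f\alpha,f\beta\rangle=f^{2}\langle\alpha,\beta\rangle$, the straight-line interpolation, the uniform $\epsilon$ from compactness, and the final application of Gray stability are precisely the details being elided. Nothing to add.
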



\section{Horizontal contact structures}\label{hor_contact}
Horizontal contact structures on Seifert manifolds, like horizontal foliations, may be thought of as connections with a certain curvature condition. As opposed to the flat case where the horizontal distribution is integrable, the distribution in question is contact if and only if the holonomy around the boundary of any embedded disc in the base is negative. To be precise this means that for the induced connection on the $\mathbb{R}$-bundle given by unwrapping the $S^1$-fibers over the disc the holonomy $h$ around the boundary of the disc satisfies $h(x)-x <0$. 
This then puts restrictions on the topology of Seifert manifolds that admit horizontal contact structures and one has the following necessary and sufficient conditions.
\begin{thm}[Honda \cite{Hon}, Lisca-Mati\'{c} \cite{LiM}]\label{hor_cond}
A Seifert manifold with normalised invariants $(g,b,\frac{\beta_1}{\alpha_1}, ... ,\frac{\beta_r}{\alpha_r})$ carries a (positive) contact structure transverse to the Seifert fibration if and only if one of the following holds:
\begin{itemize}
\item $-b-r \leq 2g - 2$
\item $g=0$, $r \leq 2$ and $-b-\sum \frac{\beta_i}{\alpha_i} < 0$
\item $g = 0$ and there are relatively prime integers $0 < a < m$ such that
$$\frac{\beta_1}{\alpha_1} > \frac{m-a}{m}, \frac{\beta_2}{\alpha_2} > \frac{a}{m} \text{ and } \frac{\beta_i}{\alpha_i} > \frac{m-1}{m} \text{, for } i \geq 3.$$ 
\end{itemize}
\end{thm}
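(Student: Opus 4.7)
The plan is to recast the existence problem for a horizontal contact structure on $M = (R \times S^1) \cup W_0 \cup \ldots \cup W_r$ as a problem about $1$-forms with prescribed periods on the base $R$. After averaging along the $S^1$-action, any plane field on $R \times S^1$ transverse to the fibers has the form $\xi = \ker(d\phi + \pi^*\beta)$ for a $1$-form $\beta$ on $R$, and the contact condition $\alpha \wedge d\alpha > 0$ reduces to $d\beta > 0$ on the interior of $R$. Thus a horizontal contact structure on the regular part is exactly the same data as a $1$-form $\beta$ whose differential is a positive area form, and the existence question becomes whether such a $\beta$ can be chosen with boundary periods $c_i := \int_{R_i} \beta$ compatible with the attaching of the solid tori $W_i$.

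First I would work out the sharp local extension condition at each filling torus. Lifting through the $\mathbb{Z}_{\alpha_i}$-cover $D^2 \times S^1 \to W_i$ of Proposition \ref{fibre_cov}, one looks for an equivariant contact form of model type $d\phi + f(r)\,d\theta$ with $f'(r) > 0$ and $f(0) = 0$. Matching this model to the boundary data and to the slope $\beta_i/\alpha_i$ of the attaching map yields a sharp upper bound on $c_i$ in terms of $\beta_i/\alpha_i$, and similarly an upper bound on $c_0$ in terms of $b$. Combined with the Stokes identity $\sum c_i = \int_R d\beta > 0$, these bounds give a Milnor--Wood type inequality on $(g, b, \beta_i/\alpha_i)$, which is precisely the necessary condition for existence.

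For sufficiency, in the generic case $-b - r \leq 2g - 2$ one may pick $c_i$ strictly below the sharp local bounds with $\sum c_i > 0$ and construct $\beta$ with the prescribed periods using elementary de Rham theory on the surface with boundary, then smooth so that $d\beta > 0$ everywhere; the hypothesis on $g$ and $r$ is exactly what guarantees enough room on $R$ for this to be carried out. For the genus zero cases the naive construction fails and special arguments are needed. If $r \leq 2$, then $M$ is a lens space or $S^2 \times S^1$-like manifold, and one writes down explicit $S^1$-invariant contact forms, checking that $-b - \sum \beta_i/\alpha_i < 0$ is the sharp threshold for $\beta$ to exist. In case (3), I would apply the branched-cover construction of Proposition \ref{fibre_cov}: the relatively prime integers $(a, m)$ are chosen so that the corresponding fiberwise $m$-fold branched cover of $M$ satisfies the inequality of case (1), and one then descends an equivariant horizontal contact structure from the cover back to $M$.

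The hardest step will be establishing the sharp local extension condition at each exceptional fiber and running the converse construction cleanly; this is essentially the slope-counting problem at the heart of Honda's classification of tight contact structures on solid tori, and many sign and normalization conventions have to be tracked carefully through the $\mathbb{Z}_{\alpha_i}$-quotients. The genus zero arithmetic case is equally delicate: the branched-cover reduction only works when one can simultaneously arrange for the Seifert invariants of the cover and the Stokes-type inequality to match, and the relatively prime condition on $(a, m)$ is exactly what makes both conditions compatible.
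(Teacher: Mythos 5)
This statement is quoted from \cite{Hon} and \cite{LiM}; the paper gives no proof of its own, so your proposal can only be measured against the arguments in those references, whose skeleton (invariant normal form $\ker(d\phi+\pi^*\beta)$ with $d\beta>0$ on the regular part, period bookkeeping at the boundary tori, a Stokes-type inequality) you do reproduce. However, there are genuine gaps. First, the opening step ``after averaging along the $S^1$-action'' is not valid as stated. Writing $\alpha=d\phi+\beta$ with $\beta$ a $\phi$-dependent $1$-form on $R$, the contact condition is $d_R\beta-\beta\wedge\partial_\phi\beta>0$, and the fiberwise average of the error term $\beta\wedge\partial_\phi\beta$ computes twice the signed area enclosed by the loop $\phi\mapsto\beta(\phi)$ in $T_x^*R$, which has no preferred sign; the averaged form therefore need not be contact. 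That a contact structure transverse to the fibers is \emph{isotopic} to an invariant one is a nontrivial theorem (Giroux; see also \cite{LiM}), and your entire necessity argument rests on it without proof.

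Second, and more seriously, your necessity argument cannot produce the third bullet. A sharp local bound on each period $c_i$ combined with $\sum_i c_i>0$ yields a single linear inequality in $(b,\beta_i/\alpha_i)$, whereas the realisability condition is an arithmetic, non-convex condition: the whole content of the Jankins--Neumann conjecture proved by Naimi \cite{Nai} is that there are genus-zero Seifert invariants satisfying every such linear Milnor--Wood bound which are nevertheless not realisable. The actual proofs obtain the sharp genus-zero obstruction either from a convex-surface-theory analysis of the possible slopes on the solid tori (Honda) or by converting the boundary data of the transverse structure into rotation numbers of circle homeomorphisms and invoking the Eisenbud--Hirsch--Neumann/Naimi realisability theorem (Lisca--Mati\'c); neither ingredient appears in your outline, and ``slope-counting as in Honda's classification'' is naming the difficulty rather than resolving it. Finally, your use of Proposition \ref{fibre_cov} in case (3) has the covering direction reversed: that proposition exhibits $M$ as a fiberwise branched cover \emph{of} the quotient $M'=M/\mathbb{Z}_m$, so the correct move is to verify that $M'$ satisfies the first bullet and then \emph{pull back} a transverse contact structure along $p$ (as in Definition \ref{cont_branched}, transversality to the branch locus keeps the pullback transverse), not to descend a structure from a cover of $M$. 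With that correction the sufficiency in case (3) can be made to work, but the necessity gap remains the essential missing piece.
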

\begin{rem}\label{realise}
The final condition is the realisability condition of \cite{EHN}, which is equivalent to the existence of a horizontal foliation by Naimi \cite{Nai}. For $g > 0$ the condition for the existence of a horizontal contact structure is the upper bound of the double sided inequalities that determine the existence of horizontal foliations (cf.\ Proposition \ref{Seif_fol}).
\end{rem}
\subsection{Classification results}
A given Seifert manifold can admit several isotopy classes of horizontal contact structures. An important invariant of contact structures on Seifert manifolds is the ``enroulement'' or twisting number as introduced by Giroux. For this recall that a Legendrian knot $K$ in a contact manifold $(M,\xi)$ inherits a canonical framing given by taking a vector field of unit normals along $K$ that are also tangent to $\xi$. After choosing a reference framing this gives an integer which is called \textbf{Thurston-Bennequin number}  $tb(K)$ of the Legendrian knot $K$.  
\begin{defn}[Giroux \cite{Gir}]\label{def_twisting}
Let $\xi$ be a contact structure on a Seifert fibered space. The \textbf{twisting number} $t(\xi)$ of $\xi$ is the maximal Thurston-Bennequin number of a knot that is smoothly isotopic to a regular fiber, where the Thurston-Bennequin number is measured relative to the canonical framing coming from the base. 
\end{defn}
Generalising results of Giroux \cite{Gir} from $S^1$-bundles to the case of general Seifert fibered spaces Massot \cite{Mas} has shown that a contact structure can be isotoped to a horizontal one if and only if it is universally tight and has negative twisting number:
\begin{thm}[\cite{Mas} Theorem A]\label{neg_twist}
Let $\xi$ be a contact structure on a Seifert fibered space. Then $\xi$ can be made horizontal via an isotopy if and only if it is universally tight and $t(\xi) <0$.
\end{thm}
An important step in the proof of Theorem \ref{neg_twist} is to show that any contact structure with $t(\xi) <0$ can be isotoped into a so called \textbf{normal form}. More precisely, given a Seifert fibered space described as $M =(R \times S^1) \cup W_0 \cup...\cup W_r$ we say that $\xi$ is in normal form if it is tangent to the $S^1$-fibers on $\widehat{M} = R \times S^1$. 
\begin{lem}[\cite{Mas} Proposition 5.5]\label{normal_exist}
Let $\xi$ be a contact structure on a Seifert fibered space with $t(\xi) < 0$. Then $\xi$ can be isotoped into normal form.
\end{lem}
In applications it will be important to use a slightly more precise version of this result, that follows from the way that Lemma \ref{normal_exist} is proved. 
\begin{lem}[\cite{Mas} pp.\ 1757--8]\label{normal_special}
Let $\xi$ be  a universally tight contact structure on a Seifert manifold $M$ and let $F_0$ be a regular fiber that is Legendrian and satisfies $tb(F_0) = t(\xi) < 0$. Then $\xi$ can be brought into normal form by an isotopy that fixes neighbourhoods of the exceptional fibers.
\end{lem}
\begin{proof}[Sketch of proof]
First assume that $F_0$ is a regular fiber over a base point $p_0$ in $R$ that is Legendrian and realises $t(\xi)$. Then by the Weinstein Neighbourhood Theorem for Legendrian knots we can assume after an isotopy with support near $F_0$ that the contact structure is vertical near $F_0$ and is given as the kernel of a $1$-form $\alpha_n = cos(n\theta)dx - sin(n \theta)dy$, where $\theta$ denotes the fiber coordinate and $(x,y)$ are coordinates on a neighbourhood of $p_0$. Here $n= -t(\xi)$ and it is essential that this number is positive so that the form $\alpha_n$ determines a positive contact structure. We then consider a bouquet of circles $B= \gamma_1 \vee \cdots \vee \gamma_k$ based at $p_0$ in $R$ onto which $R$ contracts. One then uses Giroux's Flexibility Theorem to make the contact structure vertical near $\gamma_i \times S^1$ by isotopies with support disjoint from a small neighbourhood of $F_0$. Again at this point it is essential that the twisting is negative in order to apply Giroux's Flexibility Theorem for surfaces (in this case annuli) with Legendrian boundary. These isotopies all have support in a small neighbourhood $N$ of $B \times S^1$. By stretching $N$ to fill out all of $\widehat{M}$ we obtain the desired isotopy, which in total has support disjoint from neighbourhoods of the exceptional fibers.
\end{proof}
Now any vertical contact structure $\xi$ on $\widehat{M} = R \times S^1$ is given as the pullback of the canonical contact structure $\xi_{can}$ under a fiberwise $n$-fold covering $\widehat{M} \rightarrow ST^*R$ that we denote $p_{\xi}$ (cf.\ \cite{Gir} Proposition 3.3). The map $p_{\xi}$ is defined by sending a point $x$ to the image of $\xi_x$ under the projection $\pi\colon R\times S^1 \to R$ considered as an element in $ST^*R$. Note that the number $n$ corresponds to $-t(\xi)$. 

Under the additional assumption that the contact structure is universally tight one can further restrict the possibilities for the contact structures on the solid torus neighbourhoods of the exceptional fibers. For in this case the contact structures on each of the solid tori $W_i$ is also universally tight. Thus according to classification results of Giroux (cf.\ \cite{Mas} Lemma 3.4) there are at most two possibilities for each $\xi|_{W_i}$ up to isotopy relative to $\partial W_i$. The two possibilities are distinguished by the fact that the are isotopic relative to the boundary to contact structures that are positively resp.\ negatively transverse to the fibers on $int(W_i)$. In fact, these choices must be made coherently due to the following:
\begin{lem}[\cite{Mas} Proposition 6.1]\label{unique_ex}
A contact structure $\xi$ on a Seifert fibered  space $M =(R \times S^1) \cup W_0 \cup...\cup W_r$ in normal form is universally tight if and only if the restrictions $\xi|_{W_i}$ are isotopic relative to $\partial W_i$ to ones that are \textbf{all} either positively or negatively transverse to the $S^1$-fibers on $int(W_i)$.

In particular, there are at most two universally tight extensions of $\xi|_{R \times S^1}$ which are isotopic as unoriented contact structures and they can both be made horizontal after a suitable isotopy.
\end{lem}
This lemma is extremely useful as it means that determining a universally tight contact structure with $t(\xi) <0$ up to isotopy (and orientation reversal of plane fields), reduces to determining it on $\widehat{M}$. In order to state Massot's classification of universally tight contact structures with negative twisting in terms of these normal forms it is convenient to describe the contact structure on $R \times S^1$ in a slightly different fashion.

For this one notes that the choice of section $\widehat{s}$ in $\widehat{M}$ used to compute the normalised Seifert invariants gives a section in $ST^*R$ via the covering map $p_{\xi}$. This section then gives an identification of $ST^*R$ and $\widehat{M}$ with $R \times S^1$ and with respect to these identifications the map $p_{\xi}$ is the product of the identity with the standard $n$-fold cover of $S^1$ up to fiberwise isotopy.

Moreover, under this identification the canonical contact structure is isotopic to the kernel of some $1$-form
\[\alpha_{\lambda} = \cos(\theta)\lambda + \sin(\theta)\lambda \circ J,\]
where $\lambda$ is a non-vanishing 1-form on $R$ and $J$ is an almost complex structure. The contact structure on $\widehat{M}$ is then given by the kernel of the $1$-form
\[\alpha_{\lambda,n} = \cos(n\theta)\lambda + \sin(n\theta)\lambda \circ J.\]
Note that the isotopy class of $\textrm{Ker}(\alpha_{\lambda,n})$ is independent of the choice of $J$. Furthermore, any homotopy of $\lambda$ through non-vanishing $1$-forms induces an isotopy of the contact structure $\textrm{Ker}(\alpha_{\lambda,n})$ and the homotopy class of $\lambda$ as a non-vanishing $1$-form is called the \textbf{$R$-class} of the normal form. The other important invariant of $\lambda$ is given by its indices on the boundary components of $R$ and the collection of these indices $(x_0,\cdots,x_r)$ ordered according to the tori $W_i$ is called the \textbf{multi-index} of the normal form. The various indices correspond to different fiberwise homotopy classes of maps
$$p_{\xi}\colon  \partial W_i \to S^1 \subseteq  \partial R,$$
which can in turn be identified with $\mathbb{Z}$ as a torsor.

We saw above that there are restrictions on the ways to extend a vertical contact structure on $R \times S^1$ to one that is universally tight. On the other hand there are simple arithmetic criteria to determine when this is possible. The following is a special case of (\cite{Mas}, Theorem B), which is stated only for Seifert fibered spaces whose invariants are in normal form, but its proof (cf.\ \cite{Mas} p.\ 1758) holds without this assumption.
\begin{lem}\label{arithemtic_crit}
Let $M$ be a Seifert fibered space with (not necessarily normalised) Seifert invariants $(g,b,\frac{\beta_1}{\alpha_1},...,\frac{\beta_r}{\alpha_r})$ and assume that
 $$b + \sum_{i=1}^r \Big\lceil \frac{\beta_i}{\alpha_i} \Big\rceil = 2-2g.$$
 Then there is a universally tight contact structure given by a normal form with multi-index $(b,\lceil \frac{\beta_1}{\alpha_1} \rceil,...,\lceil \frac{\beta_r}{\alpha_r} \rceil)$. In particular, $\xi$ can be assumed to be horizontal on the complement of $R \times S^1$.
\end{lem}
We now describe Massot's classification which divides into two cases. The first is the flexible case, where the specific normal form is not important, and the second is the rigid case where it contains essential information about the isotopy class of the contact structure. Note that in the latter case one requires the additional assumption that the base has genus $g > 0$.
\begin{thm}[Flexible Case, \cite{Mas} Theorem D]\label{normal_form_1}
Let $\xi,\xi'$ be universally tight contact structures on a Seifert fibered space with normalised Seifert invariants $(g,b,\frac{\beta_1}{\alpha_1},...,\frac{\beta_r}{\alpha_r})$. If $-b-r < 2g-2$ and $t(\xi) =t(\xi') =-1$, then $\xi$ and $\xi'$ are isotopic as unoriented contact structures, i.e.\ they are isotopic after possibly swapping the orientations of one of the plane fields.
\end{thm}
\begin{thm}[Rigid Case, \cite{Mas} Theorem E]\label{normal_form_2}
Let $\xi$ be a universally tight contact structure on a Seifert fibered space with normalised Seifert invariants $(g,b,\frac{\beta_1}{\alpha_1},...,\frac{\beta_r}{\alpha_r})$, where $g > 0$. Assume furthermore that either  $t(\xi) =-n < -1$ or $-b-r = 2g-2$ and $t(\xi) =-1$. Then the $R$-class of the normal form is unique. 

Moreover, the multi-index of this normal form is $(nb,\lceil \frac{n\beta_1}{\alpha_1}\rceil,...,\lceil \frac{n\beta_r}{\alpha_r}\rceil)$. 
\end{thm}
\noindent In the case where the base has genus $g = 0$, the second part of Theorem \ref{normal_form_2} still holds. The following is a special case of \cite{Mas} Proposition 8.2. Note that Massot uses the notation $e_0 = -b-r$ at this point.
\begin{thm}[\cite{Mas} Proposition 8.2]\label{normal_form_3}
Let $\xi$ be a universally tight contact structure on a Seifert fibered space with normalised Seifert invariants $(g,b,\frac{\beta_1}{\alpha_1},...,\frac{\beta_r}{\alpha_r})$, where $g = 0$. Assume furthermore that either $t(\xi) =-n < -1$ or $-b-r = 2g-2$ and $t(\xi) =-1$. Then the multi-index of any normal form is $(nb,\lceil \frac{n\beta_1}{\alpha_1}\rceil,...,\lceil \frac{n\beta_r}{\alpha_r}\rceil)$. In particular, there is at most one isotopy class of contact structures with $t(\xi) = -n$ up to orientation reversal of plane fields.
\end{thm}
\subsection{Realising contact structures as branched coverings} Contact structures on $S^1$-bundles over surfaces with twisting $-n$ can be realised as coverings of contact structures with twisting $-1$ (cf.\ \cite{Gir}). For more general Seifert fibered spaces a similar result holds, but in general one must allow branched coverings.
\begin{defn}[Contact branched cover]\label{cont_branched}
Let $(M',\xi')$ be a contact manifold with $\xi' = \text{Ker}\thinspace (\alpha')$ and let $M \stackrel{p} \longrightarrow  M'$ be a branched covering of $3$-manifolds such that the image of the branching locus $L \subset M$ under $p$ is a transverse link. Choose a $1$-form $\beta$ on $M$ with support in a neighbourhood of $L$ such that
$$p^*\alpha' \wedge d\beta> 0 \text{ and } \beta = 0  \text{ along } L.$$
The pull-back contact structure $p^* \xi'$ is then defined as the kernel of the following $1$-form for any $\epsilon > 0$ sufficiently small:
$$\alpha = p^*\alpha' + \epsilon \thinspace \beta.$$
\end{defn}
\begin{rem}
Since the conditions imposed on $\beta$ are convex, it follows that $p^* \xi'$ is well defined up to isotopy in view of Gray's Stability Theorem.
\end{rem}

\noindent Now if $M$ admits a contact structure $\xi$ with twisting number $-n$, then $\xi$ is in fact isotopic to the pullback of a contact structure $\xi'$ with twisting number $-1$ by an $n$-fold fiberwise branched cover.
\begin{prop}\label{cover_form}
Let $M$ be a Seifert manifold admitting a contact structure with twisting number $t(\xi) = -n < -1$, then there is a fiberwise branched covering $M \stackrel{p} \longrightarrow  M'$ and a contact structure $\xi'$ on $M'$ with twisting $-1$ such that $\xi$ is isotopic to $p^* \xi'$.
\end{prop}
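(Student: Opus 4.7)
The plan is to place $\xi$ in the Giroux--Massot normal form of Theorem \ref{normal_form}, observe that this normal form is manifestly invariant under the $\mathbb{Z}_n$-subaction of the Seifert $S^1$-action, and then take the branched quotient provided by Proposition \ref{fibre_cov} to produce both the desired manifold $M'$ and the desired contact structure $\xi'$.

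First I would apply Theorem \ref{normal_form} to the (universally tight) contact structure $\xi$ to assume that on $\widehat M \cong R \times S^1$ it is the kernel of $\alpha_{\lambda,n} = \cos(n\theta)\lambda + \sin(n\theta)\lambda\circ J$. This 1-form is invariant under $\theta \mapsto \theta + 2\pi/n$, so the $\mathbb{Z}_n$-subaction of the fibrewise $S^1$-action preserves $\xi|_{\widehat M}$. Next I would form $M' = M/\mathbb{Z}_n$ and let $p: M \to M'$ be the quotient map. By Proposition \ref{fibre_cov}, $p$ is a fibrewise branched covering, branched with index $\gcd(n,\alpha_i)$ over the image of the $i$-th exceptional fibre, and under the identification of $\widehat{M'}$ with $ST^*R$ coming from the chosen sections, $p$ restricts on $\widehat M$ to the covering $p_\xi$ of the normal form.

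The $\mathbb{Z}_n$-action on $M$ preserves $\xi|_{\widehat M}$, and I would then show that $\xi$ can be isotoped so that it is $\mathbb{Z}_n$-equivariant on all of $M$, using the uniqueness (up to isotopy) of the universally tight extension across each exceptional fibre together with the fact that Massot's local models are themselves rotationally symmetric about the singular fibres. Once $\xi$ is $\mathbb{Z}_n$-equivariant, it descends to a plane field $\xi'$ on $M'$. On $\widehat{M'}$ this equals $\ker \alpha_\lambda$, the canonical contact structure on $ST^*R$, so $\xi'$ is contact with twisting number $-1$; since the images of the singular fibres of $M$ are transverse to $\xi'$, the descent extends smoothly to all of $M'$ as a contact structure.

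Finally, both $\xi$ and the branched pullback $p^*\xi'$ in the sense of Definition \ref{cont_branched} agree with $p_\xi^*\alpha_\lambda$ on $\widehat M$ and so admit $p_\xi$ as their common normal form on the complement of the singular fibres. The uniqueness statement in Theorem \ref{normal_form} therefore yields an isotopy $\xi \simeq p^*\xi'$, completing the argument. The \emph{main obstacle} is the equivariance step: Massot's normal form is a priori only prescribed up to isotopy, so one must verify that a genuinely $\mathbb{Z}_n$-equivariant representative can be selected within its isotopy class, which amounts to checking that the rotational symmetry of the standard local models near each exceptional fibre is honest and not merely up to isotopy.
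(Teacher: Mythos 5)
Your construction of $\xi'$ by descending $\xi$ through the quotient $M\to M'=M/\mathbb{Z}_n$ has a genuine gap at the branch locus, and it is not the obstacle you flag. Even granting that $\xi$ can be made $\mathbb{Z}_n$-equivariant, an equivariant contact structure does \emph{not} descend to a contact structure --- or even to a continuous plane field --- across a circle of branch points. In the local model near an exceptional fibre with $k=\gcd(n,\alpha_i)\geq 2$, the stabiliser $\mathbb{Z}_k$ rotates the normal disc and the quotient map is $z\mapsto z^k$ in the disc coordinate; an invariant contact form transverse to the core, necessarily of the shape $dw+r^2\,d\phi$ to leading order, becomes $dw+\tfrac{1}{k}(r')^{2/k}\,d\phi'$ downstairs, whose value on the unit angular vector is $\tfrac{1}{k}(r')^{2/k-1}$. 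For $k=2$ this is a non-zero constant and for $k\geq 3$ it blows up, so the limiting plane along a ray depends on the ray and the descended plane field does not extend over $r'=0$. (This is the mirror image of the degeneracy that forces the correction term $\epsilon\,\beta$ in Definition \ref{cont_branched} for the pullback direction.) Repairing this requires first normalising $\xi$ near each such fibre so that it is locally a branched pullback, which already presupposes that a suitable contact structure exists downstairs.

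That existence is precisely what the paper's proof supplies and your argument omits: descent is only needed over $\widehat M$, where the covering is honest and $\alpha_{\lambda,n}$ visibly pushes down to $\alpha_\lambda$, and the extension of $\ker\alpha_\lambda$ over the solid tori of $M'$ is then provided by Massot's existence theorem, whose hypothesis is checked by the Poincar\'e--Hopf computation of the indices of $\lambda$ via (\cite{Mas}, Proposition 8.2). Finally, the identification $\xi\simeq p^*\xi'$ should rest on the uniqueness of universally tight extensions from $\widehat M$ to $M$ (\cite{Mas}, Proposition 6.1) together with positive transversality to fix the coorientation, not on the uniqueness clause of Theorem \ref{normal_form}: the latter assumes $g>0$, whereas Proposition \ref{cover_form} is applied in the genus-zero setting of Theorem \ref{negative_pert2}.
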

\begin{proof}
Let $(g,b,\frac{\beta_1}{\alpha_1},...,\frac{\beta_r}{\alpha_r})$ be the normalised Seifert invariants of $M$ and let $M \stackrel{p} \longrightarrow M'$ be the $n$-fold fiberwise branched cover given by Proposition \ref{fibre_cov}. Since $t(\xi) = -n$ by assumption, the contact structure $\xi$ admits a normal form with associated $1$-form
$$\alpha_{\lambda,n} = \cos(n\theta) \thinspace \lambda + \sin(n\theta) \thinspace \lambda \circ J.$$
By Theorem \ref{normal_form_2} the indices of $\lambda$ are $(nb,\lceil \frac{n\beta_1}{\alpha_1}\rceil,...,\lceil \frac{n\beta_r}{\alpha_r}\rceil)$ and Poincar\'e-Hopf implies
\[ nb + \sum_{i=1}^r \Big\lceil \frac{n\beta_i}{\alpha_i} \Big\rceil = 2-2g.\]
It then follows by Lemma \ref{arithemtic_crit} that the Seifert manifold $M'$, which has Seifert invariants $(g,nb,\frac{n\beta_1}{\alpha_1},...,\frac{n\beta_r}{\alpha_r})$, admits a contact structure $\xi'$ with normal form given by 
$$\alpha_{\lambda} = \cos(\theta) \thinspace \lambda + \sin(\theta) \thinspace \lambda \circ J,$$
which is in particular transverse to the branching locus of the map $p$. The pullback of the contact structure $\xi'$ can then be perturbed in a $C^{\infty}$-small fashion near the branching locus, where the contact structure is transverse, to obtain $p^*\xi'$. Since there is a unique way to extend the pullback $p^*\xi'|_{\widehat{M}}$ to a contact structure on all of $M$, which can be made positively transverse by Lemma \ref{unique_ex} and $p^* \xi'$ is isotopic to a positively transverse contact structure, we conclude that $\xi$ is isotopic to $p^* \xi' $. 
\end{proof}
\noindent We next note that any two contact structures with twisting number $-1$ are necessarily contactomorphic modulo orientation reversal of plane fields. Note that this follows from Theorem \ref{normal_form_1} if $-b-r < 2g-2$. In the other case we have:
\begin{prop}\label{con_class}
Let $\xi,\xi'$ be universally tight contact structures on a Seifert fibered space with normalised Seifert invariants $(g,b,\frac{\beta_1}{\alpha_1},...,\frac{\beta_r}{\alpha_r})$ and assume that $-b-r = 2g-2$, the twisting numbers satisfy $t(\xi) = t(\xi') = -1$ and $g >0$. Then any two normal forms of $\xi$ and $\xi'$ are contactomorphic by a diffeomorphism with support disjoint from the exceptional fibers.
\end{prop}
\begin{proof}
Let $\alpha_{\lambda}$ and $\alpha_{\lambda'}$ be the 1-forms associated to the normal form of $\xi$ and $\xi'$ respectively on $\widehat{M} = R \times S^1 \subset M$. After possibly replacing $\lambda$ with $-\lambda$, we may assume that both $\xi$ and $\xi'$ are isotopic to \emph{positively} transverse contact structures. By Theorem \ref{normal_form_2} the indices of both $\lambda$ and $\lambda'$ must then agree on $\partial R$, since by assumption $-b-r = 2g-2$. This is equivalent to the restrictions of the maps
$$p_{\xi},p_{\xi'}: \widehat{M} \to ST^*R$$
being fiberwise isotopic on the boundary of $\widehat{M}$. Furthermore, since $t(\xi) = t(\xi') = -1$ the maps above are in fact diffeomorphisms so that after an initial isotopy we may assume that $p_{\xi}$ and $p_{\xi'}$ agree near $\partial \widehat{M}$. It follows that $p_{\xi} \circ p^{-1}_{\xi'}$ is a diffeomorphism of $\widehat{M}$ that extends to all of $M$ so that $\xi$ and $\xi'$ are contactomorphic. 
\end{proof}
\begin{rem}\label{rem_or_rev}
If $M$ admits an orientation preserving diffeomorphism that reverses the orientation on the fibers, then any oriented horizontal contact structure is contactomorphic to the contact structure given by reversing the orientation of the plane field. In this case the above proposition in fact holds for contactomorphism classes of \emph{oriented} contact structures. Examples of such manifolds are given by Brieskorn spheres $\Sigma(p,q,r) \subset \mathbb{C}^3$, in which case the conjugation map on $\mathbb{C}^3$ yields the desired self-diffeomorphism.
\end{rem}

\section{Deformations of taut foliations on Seifert manifolds}\label{Deform_taut}
In this section we consider the problem of determining which contact structures on Seifert manifolds are deformations of taut foliations. The obvious necessary condition for a contact structure to be a perturbation of a taut foliation is that it is universally tight. We will show that in most cases a universally tight contact structure $\xi$ with negative twisting on a Seifert fibered manifolds is indeed a deformation of a taut foliation. 


In fact by Proposition \ref{cover_form} it suffices to consider contact structures with twisting number $-1$, in which case it is fairly easy to construct the necessary foliations at least when the genus of the base is at least one. The genus zero case is more subtle as not every contact structure with negative twisting can be a perturbation of a taut foliation. We first note some preliminary lemmas.
\begin{lem}\label{cover_def}
Let $\xi = p^*\xi'$ be a contact structure on a Seifert fibered space which is the pullback of a contact structure $\xi'$ under a fibered branched cover $M \stackrel{p} \longrightarrow M'$. Assume that $\xi'$ is isotopic to a linear deformation of a taut foliation through an isotopy that is transverse to the branching locus of $p$. Then $\xi$ is also a linear deformation of a taut foliation.
%
\end{lem}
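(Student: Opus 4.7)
The plan is to pull back the given linear deformation via $p$ and correct it near the branching locus $L$ using the $1$-form from Definition \ref{cont_branched}. By the characterization of linear deformability recalled in Section \ref{perturb_fol}, the hypothesis provides $1$-forms $\alpha', \beta'$ on $M'$ with $\alpha'$ defining the taut foliation $\mathcal{F}'$, with $\alpha' \wedge d\beta' + \beta' \wedge d\alpha' > 0$ everywhere, and such that $\alpha' + \beta'$ defines a contact structure isotopic to $\xi'$. The transversality hypothesis moreover guarantees that the entire path $\alpha'_t := \alpha' + t\beta'$, $t \in [0,1]$, is transverse to the transverse link $p(L)$.

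First I would set $\mathcal{F} := p^*\mathcal{F}'$ and verify that it is a taut foliation on $M$. Because $\mathcal{F}'$ is transverse to $p(L)$, in local branching coordinates $(u_1,u_2,w) \mapsto ((u_1+iu_2)^n,w)$ the foliation $\mathcal{F}'$ is $\{w = \mathrm{const}\}$ and its pullback is again $\{w = \mathrm{const}\}$, so $\mathcal{F}$ extends smoothly across $L$. Tautness transfers from $\mathcal{F}'$ to $\mathcal{F}$ by lifting closed transversals: through any point $y$ in a leaf $F$ of $\mathcal{F}$ with $p(y) \notin p(L)$ (which exists generically since $p(L)$ is one-dimensional) one can lift a closed transversal of $\mathcal{F}'$ through $p(y)$ that has been perturbed to avoid $p(L)$, where $p$ restricts to a genuine covering.

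Next I would choose a $1$-form $\tilde{\beta}$ of the type used in Definition \ref{cont_branched}, supported in a small tubular neighborhood of $L$, vanishing on $L$, and satisfying $p^*\alpha'_t \wedge d\tilde{\beta}|_L > 0$ for every $t \in [0,1]$; this is possible because the condition is open and the family $\{\alpha'_t\}$ is a compact family of forms transverse to $p(L)$. The candidate deformation is then
\[\alpha_t := p^*\alpha'_t + t\epsilon \tilde{\beta}\]
for a small $\epsilon > 0$. A direct expansion yields
\[\frac{d}{dt}\bigg|_{t=0}\bigl(\alpha_t \wedge d\alpha_t\bigr) = p^*\bigl(\alpha' \wedge d\beta' + \beta' \wedge d\alpha'\bigr) + \epsilon\bigl(p^*\alpha' \wedge d\tilde{\beta} + \tilde{\beta} \wedge p^*d\alpha'\bigr).\]
The hard part will be showing this $3$-form is positive everywhere. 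Away from $L$ the first summand is the pullback of a positive $3$-form under a local diffeomorphism and hence positive, dominating the second summand for small $\epsilon$. Along $L$ the Jacobian of $p$ degenerates so the first summand vanishes, but because $\tilde{\beta}|_L = 0$ the second summand restricts to $\epsilon\, p^*\alpha' \wedge d\tilde{\beta}|_L > 0$; by continuity this positivity persists on a whole tubular neighborhood of $L$, which can be chosen to patch with the exterior estimate once $\epsilon$ is small enough.

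Finally I would identify the endpoint: $\alpha_1 = p^*(\alpha' + \beta') + \epsilon \tilde{\beta}$ is exactly the form produced by Definition \ref{cont_branched} applied to $\alpha' + \beta'$. Since the latter defines a contact structure isotopic to $\xi'$, the convexity of the conditions on $\tilde{\beta}$ combined with Gray stability identify $\ker \alpha_1$ with $p^*\xi' = \xi$ up to isotopy, completing the argument.
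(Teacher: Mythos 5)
Your proposal is correct and follows essentially the same route as the paper: pull back the linear deformation via $p$ and add the correction term $t\epsilon\beta$ with $\beta$ as in Definition \ref{cont_branched}, checking positivity separately near and away from the branching locus. The extra verifications you include (tautness of $p^*\mathcal{F}'$ and the endpoint identification via Gray stability) are sound elaborations of points the paper leaves implicit.
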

\begin{proof}
Let $\alpha'$ be a defining form for $\xi'$ and let $\alpha_t$ be a smooth family of non-vanishing $1$-forms so that $\text{Ker}(\alpha_0)$ is integrable and tangent to a taut foliation and $\alpha_t$ is contact for $t > 0$. After applying an initial isotopy, we may also assume that $\text{Ker}(\alpha_1) = \xi'$ and that the entire family is transverse to the branching locus $L$ of $p$. Then $p^*\alpha_t$ is a deformation of a taut foliation that is contact away from $L$ for $t>0$, where it is closed. We let $\beta$ be a $1$-form as in Definition \ref{cont_branched} such that $p^*\alpha_0 \wedge d\beta> 0$ and $\beta= 0$ along $L$ . Then $\widetilde{\alpha}_t = p^*\alpha_t + t \thinspace \epsilon \beta$ provides the desired linear deformation since
\[\frac{d}{dt}\widetilde{\alpha}_t \wedge d \widetilde{\alpha}_t \bigg|_{t=0} = p^*\left(\frac{d}{dt}\alpha_t \wedge d \alpha_t \right)\bigg|_{t=0}  + \epsilon \thinspace (p^*\alpha_0 \wedge d\beta +  \beta \wedge p^*d\alpha_0) > 0\]
for any $\epsilon > 0$ that is sufficiently small.
\end{proof}
\noindent We now come to the main result of this section.
\begin{thm}\label{negative_pert1}
Let $\xi$ be a universally tight contact structure with negative twisting number $t(\xi)=-n$ on a Seifert manifold and assume that the base orbifold has genus $g > 0$. Then $\xi$ is a deformation of a taut foliation. Moreover, if $n > 1$ or $2 - 2g \leq -b $ in the case that $n = 1$, then this foliation can be taken to be horizontal and the deformation linear.
\end{thm}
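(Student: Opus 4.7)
The strategy is to reduce to twisting $-1$ by a fiberwise branched cover and then construct an adapted foliation using the known theory of horizontal foliations on Seifert manifolds.

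\textbf{Reduction to $n=1$.} When $n>1$, Proposition \ref{cover_form} produces a fiberwise branched cover $p:M\to M'$ and a contact structure $\xi'$ on $M'$ with $t(\xi')=-1$ such that $\xi\simeq p^{*}\xi'$. The base of $M'$ still has genus $g>0$, and the branching locus of $p$ is contained in the exceptional fibers of $M$, which are transverse to any horizontal plane field. Thus, once $\xi'$ on $M'$ is realised as a linear deformation of a horizontal taut foliation through plane fields transverse to the branching locus, Lemma \ref{cover_def} lifts this to a linear deformation of $\xi$ by a horizontal taut foliation on $M$. So it suffices to treat $n=1$.

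\textbf{The horizontal case.} Assume $n=1$ and $-b\geq 2-2g$; combined with the upper bound $-b-r\leq 2g-2$ guaranteed by the existence of $\xi$ via Theorem \ref{hor_cond}, the Seifert invariants of $M$ lie in the range of Proposition \ref{Seif_fol}. That proposition furnishes a horizontal foliation $\mathcal{F}$ on $M$ with holonomy in $\mathrm{PSL}(2,\mathbb{R})$ and hyperbolic (hence attracting, in particular linear) holonomy around some essential loop in the base. Theorem \ref{spec_def}(2) then gives a linear deformation of $T\mathcal{F}$ to a positive contact structure $\xi_{\mathcal{F}}$. Horizontality is a $C^{0}$-open condition on plane fields, so $\xi_{\mathcal{F}}$ is horizontal with twisting number $-1$ and is universally tight. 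Theorem \ref{normal_form} then identifies $\xi_{\mathcal{F}}$ with $\xi$ up to isotopy (after possibly reversing the coorientation of $\mathcal{F}$ to match that of $\xi$). Tautness of $\mathcal{F}$ is automatic: every regular fiber is a closed transversal meeting every horizontal leaf.

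\textbf{The remaining case.} When $n=1$ but $-b<2-2g$, no horizontal foliation exists on $M$ and we instead construct a taut foliation with a torus leaf. Cut $M$ along a vertical incompressible torus $T$ bounding a fibered tubular neighborhood of a regular fiber; the complement, closed up by a suitable solid torus, yields an auxiliary Seifert manifold whose invariants lie in the range of Proposition \ref{Seif_fol}, and hence admits a horizontal foliation. Spinning along $T$ via Construction \ref{Spiral} then produces a foliation $\mathcal{F}$ on $M$ with $T$ as a closed leaf. The leaf $T$ is vertical and represents a nontrivial class in $H_{2}(M)$, so Theorem \ref{Goodman} yields tautness. Since $T$ carries attracting linear holonomy, Theorem \ref{spec_def}(1) deforms $\mathcal{F}$ to a universally tight contact structure of twisting $-1$, which Theorem \ref{normal_form} again identifies with $\xi$.

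\textbf{Main obstacle.} The crux of every step is identifying the perturbed contact structure with the prescribed $\xi$ up to isotopy. This rests on verifying that $\xi_{\mathcal{F}}$ has twisting exactly $-1$ (because it is $C^{\infty}$-close to a horizontal or semi-horizontal foliation) and is universally tight (because it is $C^{0}$-close to a Reebless foliation), so that Massot's uniqueness of normal forms applies. In the third step, the most delicate point is coordinating the boundary slope of the spun foliation along $T$ with the Seifert fibration, so that the complementary piece admits a horizontal foliation and the whole construction extends without introducing Reeb components.
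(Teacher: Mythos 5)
Your overall architecture (reduce to twisting $-1$ via Proposition \ref{cover_form} and Lemma \ref{cover_def}, then split according to whether $M$ itself admits a horizontal foliation) is the same as the paper's, but three steps do not go through as written. First, in the reduction you must check that the quotient $M'$ actually lands in your ``horizontal case'': the paper uses Massot's index formula (\cite{Mas}, Proposition 8.2), which for $n>1$ forces the normalised invariants of $M'$ to satisfy $-b'-r=2g-2$, so that Proposition \ref{Seif_fol} applies to $M'$. Without this you cannot guarantee a \emph{linear} deformation transverse to the branching locus downstairs, which is exactly what Lemma \ref{cover_def} requires. Second, the assertion that the perturbed contact structure $\xi_{\mathcal{F}}$ has twisting exactly $-1$ ``because it is $C^{\infty}$-close to a horizontal foliation'' is false: Theorem \ref{Seifert_dis} in this very paper exhibits horizontal foliations whose linear perturbations have twisting $-(6l-1)$ for various $l$. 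The twisting has to be pinned down using the hyperbolic holonomy around an embedded curve supplied by Proposition \ref{Seif_fol}: the characteristic foliation on the corresponding vertical torus is Morse--Smale with closed orbits meeting each fiber once, and Giroux's Flexibility Theorem then produces a Legendrian regular fiber with $tb=-1$; combined with Massot's bound $t(\xi)\leq -1$ for horizontal contact structures this gives $t(\xi_{\mathcal{F}})=-1$, after which Proposition \ref{con_class} (not merely Theorem \ref{normal_form}) identifies $\xi_{\mathcal{F}}$ with $\xi'$ up to a diffeomorphism supported away from the exceptional fibers.

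Third, and most seriously, your construction in the case $-b<2-2g$ fails at the outset: the boundary torus of a fibered tubular neighbourhood of a regular fiber bounds a solid torus, so it is \emph{compressible} and null-homologous --- it is not incompressible, Goodman's criterion does not apply to it, and by Novikov a foliation having it as a leaf cannot be Reebless (spinning along such a torus is precisely turbulisation and introduces a Reeb component). The paper instead spins along the vertical torus over a \emph{homologically essential} simple closed curve in the base --- this is where $g>0$ enters --- which is non-separating, so the resulting stack of torus leaves is homologically non-trivial and the foliation is taut. Finally, your identification of the perturbed structure with $\xi$ in this case rests on ``universally tight because it is $C^{0}$-close to a Reebless foliation'', which is exactly the folklore statement whose proof Colin has pointed out to be incomplete. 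The paper sidesteps this: since $-b<2-2g$ forces $t(\xi)=-1$ and Massot's Theorem D gives uniqueness of the universally tight contact structure up to reversing the coorientation, it suffices to realise \emph{some} horizontal contact structure as a deformation of the taut foliation, which is done by an explicit deformation to a transitive horizontal confoliation followed by (\cite{ETh}, Proposition 2.8.3); note that in this case the theorem does not claim the foliation is horizontal or the deformation linear.
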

\begin{proof}
By Proposition \ref{cover_form} there is a fiberwise branched covering $M \stackrel{p} \longrightarrow M'$ and a horizontal contact structure $\xi'$ so that $\xi$ is isotopic to $p^*\xi'$ and $t(\xi') = - 1$. For convenience we assume that both $\xi$ and $\xi'$ are in normal form and that $p^*\xi' = \xi$. We let $(g,nb,\frac{n\beta_1}{\alpha_1},,...,\frac{n\beta_r}{\alpha_r})$ denote the unnormalised Seifert invariants of $M'$. By Theorem \ref{hor_cond} we have that $-b - r \leq 2g - 2$. If $n > 1$ then according to Theorem \ref{normal_form_2}, we also have
\begin{equation}\label{Massot_equation}
nb + \sum_{i=1}^r \Big\lceil \frac{n\beta_i}{\alpha_i} \Big\rceil = 2-2g
\end{equation}
so that the normalised invariants $(g,b',\frac{\beta'_1}{\alpha'_1},...,\frac{\beta'_r}{\alpha'_r})$ of $M'$ satisfy $-b'-r = 2g-2$. We consider several cases:

\medskip

\textbf{Case 1: $n > 1$:} 

\medskip

\noindent In this case $-b'-r = 2g-2$ on $M'$. Proposition \ref{Seif_fol} then gives a horizontal  $\textit{PSL}(2,\mathbb{R})$-foliation $\mathcal{F}$ on $M'$ with hyperbolic holonomy around some embedded curve $\gamma$. We may then apply part (2) of Theorem \ref{spec_def} to deform the foliation linearly to a horizontal contact structure $\xi_{hor}$ on $M'$. The characteristic foliation on the torus $T_{\gamma}$ corresponding to $\gamma$ is Morse-Smale and has two closed orbits each intersecting a fiber in a point. This is then stable under a suitably small linear deformation. In particular, $T_\gamma$ is convex in the sense of Giroux and has a dividing set with two components, each of which intersects the fiber once.  Thus applying Giroux's Flexibility Theorem there is an isotopy with support in a neighbourhood of $T_\gamma$ so that the torus becomes \emph{ruled} or in \emph{standard form} (cf.\ \cite{Hon0} Corollary 3.6), that is the $S^1$-fibers become Legendrian. These Legendrian fibers then have Thurston-Bennequin number $-1$ measured relative to the framing coming from the base. 

It follows that $t(\xi_{hor}) \geq -1$. The opposite inequality holds for all horizontal contact structures by Theorem \ref{neg_twist} and we conclude that $t(\xi_{hor}) = -1$. We may then isotope $\xi_{hor}$ into normal form through an isotopy that is fixed near the exceptional fibers of $M'$ by Lemma \ref{normal_special}. Since all contact structures with twisting number $-1$ are contactomorphic by Proposition \ref{con_class}, we may assume that the normal form of $\xi_{hor}$ agrees with that of $\xi'$ after applying a suitable diffeomorphism. Note that the diffeomorphism given in the proof of Proposition \ref{con_class} can be chosen with support disjoint from the singular locus. It follows that $\xi'$ is isotopic to a deformation of a taut foliation. Since this isotopy was chosen to satisfy the hypotheses of Lemma \ref{cover_def} the result follows in this case.

\medskip

\textbf{Case 2: $n = 1$ and $-b-r = 2g-2$:} 

\medskip

\noindent The argument from the previous case gives the result by taking $M = M'$.

\medskip

\textbf{Case 3a: $n = 1$ and  $b \le 2g-2 <-b-r$:} 

\medskip

\noindent Proposition \ref{Seif_fol}  gives a horizontal $\textit{PSL}(2,\mathbb{R})$-foliation $\mathcal{F}$ on $M$ with hyperbolic holonomy around some embedded curve $\gamma$, which can be linearly deformed to a horizontal contact structure $\xi_{hor}$.  This contact structure must again have $t(\xi_{hor}) = -1$ and thus (after being suitably oriented) is then isotopic to $\xi$ by Theorem \ref{normal_form_1}.

\medskip

\textbf{Case 3b: $n = 1$ and $b > 2g-2$:}

\medskip

\noindent In this case we must have $t(\xi)=-1$, since $g>0$ so that $b > 0$ and the equation (\ref{Massot_equation}) cannot have any solutions. Moreover, there is only one such contact structure on $M$ up to changing the orientation of the plane field by Theorem \ref{normal_form_1}. Thus it will suffice to show that some horizontal contact structure is a deformation of a taut foliation. To this end we let $\gamma$ be a homologically essential simple closed curve in the base orbifold $B$, which exists by our assumption that $g > 0$. We cut $M$ open along the torus $T_{\gamma}$ which is the preimage of $\gamma$ in $M$ and take any horizontal foliation on the complement of $T_{\gamma}$ whose holonomy is conjugate to a rotation on the two boundary components of $B \setminus \gamma$. We may assume that the rotation angles are distinct, unless $M=T^3$, in which case all tight contact structures are deformations of some product foliation (cf.\ \cite{ETh} Proposition 2.3.1). 

We then spiral this foliation along the torus $T_{\gamma}$ (cf.\ Section \ref{Fol_and_Cont}) to obtain a foliation with a unique torus leaf that is non-separating. For convenience we then insert a product foliation to obtain a foliation $\mathcal{F}_{\gamma}$ with a single stack of torus leaves all of which are non-separating, meaning that $\mathcal{F}_{\gamma}$ is in particular taut. If $\alpha_{-},\alpha_+$ denote closed forms defining the foliation on the boundary components of a tubular neighbourhood $T \times [-1,1]$ of $T_{\gamma}$, then $\mathcal{F}_{\gamma}$ is given as the kernel of the following $1$-form:
\[\alpha_0 = \rho(-z)\alpha_{-} + \rho(z)\alpha_+ + (1-\rho(|z|))dz,\]
where $z$ denotes the second coordinate in $T \times [-1,1]$ and $\rho$ is a non-decreasing function such that:
\[\rho(z) =\begin{cases}
 0,  & \text{if } z \leq \frac{1}{4}\\
  1, & \text{if  } z \geq \frac{3}{4}.
\end{cases}\]
We write
\begin{align*}
\alpha_{-} &= \cos(\theta_-) dx-  \sin(\theta_-)dy\\
\alpha_+ &= \cos(\theta_+) dx-  \sin(\theta_+)dy \ , \  -\pi<\theta_-, \theta_+ < 0.
\end{align*}
Here we identify the fiber direction of $M$ with the $y$-coordinate. After possibly swapping the orientation of the $z$ coordinate, and hence of $M$, we may assume that $\theta_- < \theta_+$. We can then deform $\mathcal{F}_{\gamma}$ to a confoliation that is contact near $T_{\gamma}$. On $T \times [-1,1]$ this is given by the following explicit deformation
\[\alpha_t = \alpha_0 + t\left(\cos(f(z))dx - \sin(f(z))dy\right)\]
for a non-decreasing function $f:\mathbb{R} \to [0, \frac{\pi}{2}]$ that is constant outside of $[-1,1]$, has positive derivative on $(-\frac{1}{4},\frac{1}{4})$ and satisfies
\[f(z) =\begin{cases}
 \theta_-,  & \text{if } z \leq -\frac{1}{4}\\
  \theta_+, & \text{if  } z \geq \frac{1}{4}.
\end{cases}\]
A simple calculation shows that $\xi_t = \text{Ker}(\alpha_t)$ is a positive confoliation that is contact on $T \times (-\frac{1}{4},\frac{1}{4})$ for $t > 0$. Furthermore, the form $\cos(f(z))dx - \sin(f(z))dy$ is positive on $S^1$-fibers so that $\xi_t$ is horizontal for $t \in (0,1]$. Since $-b < 2 - 2g$ by assumption, $M$ can admit a horizontal contact structure for one and only one orientation by Theorem \ref{hor_cond}, so the change of orientation made above does not affect anything. The resulting confoliation is then transitive and can thus be $C^{\infty}$-perturbed to a contact structure which is by construction horizontal. By (\cite{ETh} Proposition 2.8.3), this perturbation can then be altered to a deformation. 
\end{proof}
\noindent Under the assumption that the genus of the base $g=0$ we have the following special case of Theorem \ref{negative_pert1} that will be used below:
\begin{prop}\label{negative_pert2}
Let $\xi$ be a universally tight contact structure on a Seifert manifold $M$ with negative twisting number $-n$ and suppose that $\xi$ is isotopic to a vertical contact structure and the base orbifold is hyperbolic. Then $\xi$ is a linear deformation of a horizontal (and hence taut) foliation.
 \end{prop} 
 \begin{proof}
The fact that $\xi$ is isotopic to a vertical contact structure gives a natural $n$-fold covering
$$M \stackrel{p_{\xi}} \longrightarrow ST^*B,$$
where $ST^*B$ is the unit cotangent bundle of the base orbifold of $M$, which is in turn a compact quotient of $\textit{PSL}(2,\mathbb{R})$. The cotangent bundle $ST^*B$ carries a canonical contact structure $\xi_{can}$ which descends from a left-invariant one on $\textit{PSL}(2,\mathbb{R})$ and $p_{\xi}^* \thinspace \xi_{can} = \xi$. It is easy to see that this contact structure is a linear deformation of a taut foliation by considering the linking form on the Lie algebra of $\textit{PSL}(2,\mathbb{R})$ and thus the same holds on the quotient 
$ST^*B$ (cf.\ \cite{Bow}, Example 3.1). The proposition then follows by pulling back under $p_{\xi}$.
\end{proof}
Note that there can be no direct analogue of Theorem \ref{negative_pert1} in the genus zero case. For as a consequence of Theorem \ref{hor_cond}, there are Seifert manifolds that admit horizontal contact structures, but no taut foliations. A particularly interesting case is that of small Seifert fibered spaces which are those having $3$ exceptional fibers and base orbifold of genus $0$. In this case any universally tight contact structure must have negative twisting number, which is equivalent to being isotopic to a  horizontal contact structure. Furthermore, swapping the orientation of $M$ has the effect of changing $b$ to $-b+3$. Thus inspection of the criteria of Theorem \ref{hor_cond} shows that $M$ admits a horizontal contact structure in both orientations if and only if its invariants are realisable and hence this is equivalent to the existence of a horizontal foliation. For Seifert fibered spaces whose bases are of genus $g=0$ the existence of a taut foliation is equivalent to that of a horizontal foliation. We summarise in the following proposition, which is proved by Lisca and Stipsicz \cite{LiS} using Heegard-Floer homology, rather than using Theorem \ref{hor_cond} which can be proven by completely elementary methods (cf.\ \cite{Hon}).
\begin{prop}\label{existence_crit}
Let $M$ be a Seifert fibered space over a base of genus $g = 0$ with infinite fundamental group. Then the following are equivalent:
\begin{enumerate}
\item $M$ admits a universally tight contact structure with negative twisting number in both orientations
\item $M$ admits a horizontal contact structure in both orientations
\item $M$ admits a horizontal foliation
\item $M$ admits a taut foliation.
\end{enumerate}
If $M$ is small, the assumption on the twisting number can be removed in (1) and taut can be replaced by Reebless in (4).
\end{prop}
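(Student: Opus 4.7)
My plan is to establish the chain of equivalences $(1)\Leftrightarrow(2)\Leftrightarrow(3)\Leftrightarrow(4)$ by combining the tools developed in the earlier sections with the Heegaard Floer result of Lisca--Stipsicz that handles the deepest implication.

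The equivalence $(1)\Leftrightarrow(2)$ follows immediately from Massot's normal form theorem (Theorem \ref{normal_form}) applied in each of the two orientations: a contact structure is universally tight with negative twisting precisely when it is isotopic to a horizontal one, while horizontal contact structures are universally tight and have negative twisting by \cite[Prop.~4.5]{Mas}. The implication $(3)\Rightarrow(4)$ is trivial, as the $S^1$-fibers are closed transversals through every leaf of a horizontal foliation. For $(3)\Rightarrow(2)$ I would first arrange that the horizontal foliation has hyperbolic holonomy around some embedded curve (as in the proof of Theorem \ref{negative_pert2}) and then apply Theorem \ref{spec_def} to linearly perturb it to positive and negative contact structures which remain transverse to the fibration, yielding horizontal contact structures on both $M$ and $\overline{M}$.

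The main remaining step is $(2)\Rightarrow(3)$, which I would handle by inspection of the criteria of Theorem \ref{hor_cond}. Under the hypothesis $g=0$ with $|\pi_1(M)|=\infty$, the second bullet applies only when $r\leq 2$; a short case analysis shows that these cases either have finite fundamental group or directly produce a horizontal foliation (the ones with infinite $\pi_1$ being covered by Theorem \ref{genus_zero_hor}). In the remaining range $r\geq 3$, the first bullet applied simultaneously to $M$ and $\overline{M}$ forces $2-r\leq b\leq -2$, and precisely this range coincides with the first case of Theorem \ref{genus_zero_hor}, which already produces a horizontal foliation. In all other configurations at least one of $M,\overline{M}$ must use the realisability condition of the third bullet, which by Naimi's theorem (cf.\ Remark \ref{realise}) is equivalent to the existence of a horizontal foliation; since horizontal foliations are invariant under orientation reversal, realisability for $M$ and for $\overline{M}$ are equivalent, so both horizontal contact structures exist.

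The equivalence $(3)\Leftrightarrow(4)$ is the deep step: the nontrivial direction $(4)\Rightarrow(3)$ for $g=0$ Seifert fibered spaces is the theorem of Lisca--Stipsicz \cite{LiS}, proved using Heegaard Floer homology. This is the main obstacle of the proof; the remaining implications reduce to elementary bookkeeping with Theorems \ref{hor_cond} and \ref{genus_zero_hor} together with Massot's normal form theory. Finally, for the small Seifert addendum: every universally tight contact structure on a small Seifert manifold already has negative twisting number, so the hypothesis on $t(\xi)$ in (1) is redundant; and small Seifert manifolds are atoroidal, so by Novikov's theorem any Reebless foliation has no (incompressible) torus leaves at all, whence Goodman's criterion (Theorem \ref{Goodman}) upgrades Reeblessness to tautness.
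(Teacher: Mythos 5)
Your proposal is correct and matches the paper's approach: the paper's justification is the paragraph immediately preceding the proposition, which likewise combines Massot's characterisation of universally tight contact structures with negative twisting as the horizontal ones, inspection of the criteria of Theorem \ref{hor_cond} under orientation reversal of the Seifert invariants together with Naimi's realisability theorem (Remark \ref{realise}), and the genus-zero equivalence of taut and horizontal foliations attributed to Lisca--Stipsicz. You spell out the $r\geq 3$ case analysis and the small-Seifert addendum in more detail than the paper does (which only makes explicit the $r=3$ computation $b\mapsto -b+3$), but the ingredients and the roles they play are the same.
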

\noindent Note that it is not clear whether any given horizontal contact structure on a small Seifert fibered space is the deformation of a horizontal foliation in the case that both exist. However, in all likelihood this ought to be the case.

\section{Deformations of Reebless foliations on Seifert manifolds}\label{Deform_Reebless}
In this section we show that all universally tight contact structures $\xi$ with $t(\xi) \geq 0$ are deformations of Reebless foliations as soon as the obvious necessary conditions are satisfied. This follows from the existence of a normal form for such contact structures given in \cite{Mas2} which generalises Giroux's normal form for tight contact structures with non-negative twisting on $S^1$-bundles.

In the following a \emph{very small} Seifert fibered space is a Seifert fibered space that admits a Seifert fibering with at most 2 exceptional fibers and whose base orbifold has genus $g=0$. Note that a very small Seifert fibered space is either a Lens space (including $S^3$) or $S^1 \times S^2$. The Lens spaces do not admit Reebless foliations by Novikov's Theorem and the only Reebless foliation on $S^1 \times S^2$ is the product foliation, which cannot be perturbed to any contact structure. Thus it is natural to rule out such spaces when showing that certain contact structures are deformations of Reebless foliations. Furthermore, a folklore result of Eliashberg and Thurston \cite{ETh} states that a perturbation of a Reebless foliation is universally tight. Unfortunately, as pointed out by V.\ Colin \cite{Col} the proof given \emph{loc.\ cit.}\ contains a gap, and one only knows that there exists some perturbation that is universally tight. But in any case it is a reasonable assumption to make when considering which contact structures are deformations of Reebless foliations.

We first recall the existence of normal forms for universally tight contact structures with non-negative twisting.
\begin{thm}[\cite{Mas2} Theorem 3]\label{Massot_normal_gen}
Let $\xi$ be a universally tight contact structure on a Seifert manifold $M$ that is not very small and is not a $T^2$-bundle with finite order monodromy. If $t(\xi) \geq 0$, then $\xi$ is isotopic to a contact structure that is horizontal outside a (non-empty) collection of incompressible pre-Lagrangian vertical tori $T = \sqcup_{i=1}^N \  T_i$, on which the contact structure is itself vertical. 

Conversely, any two contact structures $\xi_0,\xi_1$ that are vertical on a fixed collection of pre-Lagrangian vertical tori $T$ and horizontal elsewhere are isotopic as unoriented contact structures.
\end{thm}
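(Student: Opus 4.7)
The strategy is to adapt Giroux's construction of a normal form for universally tight contact structures with non-negative twisting on $S^1$-bundles to the Seifert setting, combining convex surface theory to produce the pre-Lagrangian vertical tori with a relative version of the horizontal normal form of Theorem~\ref{normal_form} applied to each complementary piece.

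For existence, I would first produce one pre-Lagrangian vertical torus. Since $t(\xi) \geq 0$ one realises a regular fibre as a Legendrian knot $F_0$ with $tb(F_0) \geq 0$ with respect to the canonical framing; a standard tubular neighbourhood $N(F_0)$ then has convex boundary whose dividing set is isotopic to the Legendrian fibre. Giroux's flexibility theorem allows one to make the characteristic foliation on $\partial N(F_0)$ linear with leaves equal to regular Seifert fibres, so that $\partial N(F_0)$ becomes pre-Lagrangian and $\xi$ is vertical along it. The hypotheses excluding very small Seifert fibered spaces and $T^2$-bundles of finite-order monodromy guarantee incompressibility of such a torus. I would then choose a maximal collection $T = \sqcup_{i=1}^N T_i$ of pairwise disjoint, incompressible, pairwise non-parallel pre-Lagrangian vertical tori on which $\xi$ is vertical, and cut $M$ along $T$. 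Maximality, combined with the exclusion of the degenerate cases, forces the restriction of $\xi$ to each complementary piece $M_j$ to have strictly negative twisting relative to the boundary framing; otherwise an additional such torus could be produced in the interior of $M_j$, contradicting maximality.

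On each piece $M_j$ one applies a relative version of Theorem~\ref{normal_form}, isotoping $\xi|_{M_j}$ rel boundary to a contact structure transverse to the Seifert fibres with vertical characteristic foliation on $\partial M_j$. Existence follows by running the normal-form argument of Section~\ref{hor_contact} with a boundary-adapted section of the bundle of cotangent directions on the base: one prescribes the linear, fibre-parallel characteristic foliation along each boundary torus at the outset, and then builds $\xi|_{M_j}$ as a pullback along a fibrewise branched cover of $ST^*R$. For uniqueness, given $\xi_0,\xi_1$ both vertical on $T$ and horizontal on the complement, each restriction $\xi_i|_{M_j}$ has a well-defined covering homotopy class as in Theorem~\ref{normal_form}; these classes agree since they are determined by the oriented data on the common boundary tori and on neighbourhoods of the exceptional fibres. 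Applying the uniqueness part of Theorem~\ref{normal_form} piece-by-piece and gluing the resulting isotopies along the vertical boundary tori produces a global isotopy, up to possibly reversing the plane-field orientation on individual pieces to match conventions.

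The hard part will be establishing the \emph{relative} version of Theorem~\ref{normal_form} required in the previous step: one needs existence and uniqueness of horizontal contact structures on Seifert manifolds with boundary, with prescribed linear, fibre-parallel characteristic foliation on each boundary torus, uniqueness being taken up to isotopy rel boundary. Massot's original treatment is carried out in the closed setting and extending it requires careful tracking of the allowed changes of section over the boundary and delicate control of the isotopies near the exceptional fibres so that the individual isotopies glue consistently across the tori in $T$. The exclusion of very small Seifert fibered spaces and $T^2$-bundles of finite-order monodromy is precisely what is needed to prevent the maximal family $T$ from being empty or the complementary pieces from failing to admit horizontal structures of the requisite kind.
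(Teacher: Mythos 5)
The paper does not prove this statement at all: it is imported verbatim as Theorem~3 of Massot's paper \cite{Mas2}, so there is no internal argument to compare yours against. Judged on its own terms, your outline follows the general shape one would expect (find vertical pre-Lagrangian tori, cut, normalise each piece), but it has a concrete gap at the very first step. A standard neighbourhood $N(F_0)$ of a Legendrian fibre with $tb(F_0)=n$ has convex boundary whose two dividing curves realise the \emph{contact} framing, i.e.\ they are homologous to $\lambda + n\mu$ in the basis given by the fibre longitude $\lambda$ and the meridian $\mu$. Giroux flexibility only lets you realise characteristic foliations adapted to that dividing set, and the pre-Lagrangian tori living in an invariant neighbourhood of a convex torus have linear characteristic foliation \emph{parallel to the dividing curves}. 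Hence for $t(\xi)=n>0$ the torus you produce is pre-Lagrangian of slope $n$, not vertical: the contact structure is not tangent to the fibres along it, which is exactly the property the theorem asserts. (Also note that a pre-Lagrangian torus is not convex, so "make it pre-Lagrangian by flexibility" conflates two different normalisations.) Producing genuinely vertical pre-Lagrangian tori when the twisting is positive is the actual content of Massot's theorem and rests on the Giroux--Honda classification of tight contact structures on $T^2\times[0,1]$ and on solid tori, not on a soft maximality argument.

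Two further points are glossed over. First, the "relative version of Theorem~\ref{normal_form}" that you invoke on each complementary piece is not established anywhere in this paper (Theorem~\ref{normal_form} is stated for closed Seifert manifolds), and you correctly flag it as the hard part --- but that means the proposal is a programme rather than a proof. Second, in the uniqueness step you allow yourself to reverse the plane-field orientation "on individual pieces to match conventions"; the statement concerns a single unoriented contact structure on all of $M$, so the coorientations on adjacent pieces must be matched consistently across each torus $T_i$, and independent reversals are not available. You would need to show that the boundary data (the coorientation of the vertical characteristic foliation on each $T_i$, together with the indices at the exceptional fibres) pin down each piece rel boundary before the gluing argument can go through.
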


\noindent With the aid of the normal form described above it is now a simple matter to show the following.
\begin{thm}\label{Reebless_def}
Let $\xi$ be a universally tight contact structure on a Seifert fibered space $M$ with $t(\xi) \geq 0$ and assume that $M$ is not very small. Then $\xi$ is a deformation of a Reebless foliation.
\end{thm}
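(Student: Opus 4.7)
The plan is to combine Massot's normal form (Theorem \ref{Massot_normal_gen}) with a spiralling construction and the Eliashberg--Thurston perturbation theorem. First I would use the normal form to assume that $\xi$ is horizontal on $M\setminus T$ and vertical along $T=\sqcup_{i=1}^{N}T_{i}$, a non-empty collection of incompressible vertical pre-Lagrangian tori. The guiding idea is to replace the vertical behaviour near each $T_{i}$ by a pair of spirals converging onto a torus leaf, producing a Reebless foliation from which $\xi$ can then be recovered by a (linear) perturbation.

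Concretely, I would cut $M$ along $T$ into Seifert fibered pieces $M_{1},\dots,M_{k}$ with toric boundary, each inheriting a horizontal contact structure with boundary tangent to the fibres. The next step is to produce, on each piece $M_{j}$, a horizontal foliation $\mathcal{F}_{j}$ whose holonomy around every boundary curve of the base is a rotation. This uses the Eisenbud--Hirsch--Neumann type realisability conditions, which are satisfied because $M_{j}$ already carries a horizontal contact structure, together with the flexibility in choosing a representation of the orbifold fundamental group of a surface \emph{with boundary} into $\mathrm{Diff}_{+}(S^{1})$. With such $\mathcal{F}_{j}$ in hand, the induced foliations are linear on both sides of every $T_{i}$, so Construction \ref{Spiral} glues them into a smooth global foliation $\mathcal{F}$ on $M$ for which each $T_{i}$ is a torus leaf with attracting holonomy on at least one side.

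I would then verify that $\mathcal{F}$ is Reebless: its only compact leaves are the $T_{i}$, each incompressible by Massot's normal form, so Novikov's theorem applies. Theorem \ref{spec_def}(1) next supplies a smooth deformation $T\mathcal{F}\rightsquigarrow\xi'$ to a positive contact structure; performing the perturbation locally in neighbourhoods of the closed leaves and inside each piece ensures that $\xi'$ remains horizontal on the interior of each $M_{j}$ and vertical along each $T_{i}$. The uniqueness clause of Theorem \ref{Massot_normal_gen} then identifies $\xi'$ with $\xi$ as unoriented contact structures, and a coorientation comparison on a single piece promotes this to an isotopy of positive cooriented contact structures.

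The principal obstacle is constructing the horizontal foliations $\mathcal{F}_{j}$ with the prescribed rotation holonomy on each boundary circle, since one must simultaneously realise these rotations and the prescribed holonomy at each exceptional fibre. This is the point at which the hypothesis $t(\xi)\geq 0$ enters in an essential way: it guarantees via Massot's normal form that $T$ is non-empty, so the base of each $M_{j}$ has non-empty boundary, providing the additional freedom in the orbifold fundamental group needed to arrange any boundary rotations compatible with the constraints coming from the exceptional fibres.
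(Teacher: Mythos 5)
Your plan is essentially the paper's own proof: apply Massot's normal form to put $\xi$ horizontal away from a non-empty collection of incompressible vertical pre-Lagrangian tori, build a foliation that is horizontal on the complement and spirals into those tori as leaves (Reebless by Novikov since the torus leaves are incompressible), perturb back to a contact structure in normal form, and conclude by the uniqueness clause of Theorem \ref{Massot_normal_gen}. The paper carries out the perturbation by an explicit confoliation formula near each $T_i$ (arranging that the perturbed plane field becomes vertical exactly once across each thickened torus, which is what makes the uniqueness clause applicable), rather than by invoking Theorem \ref{spec_def}(1), but this is the same idea and your remark that the perturbation must be performed locally so as to preserve horizontality off $T$ is exactly the point.

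One genuine omission: Theorem \ref{Massot_normal_gen} explicitly excludes $T^2$-bundles with finite order monodromy, and such manifolds are Seifert fibered, not very small, and do carry universally tight contact structures with $t(\xi)\geq 0$; your first step therefore does not apply to them. The paper disposes of this case separately using the classification of universally tight contact structures on such torus bundles (Giroux, Honda), all of which are deformations of a $T^2$-fibration. You would need to add this case to make the argument cover the full statement.
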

\begin{proof}

First assume that $M$ is not a torus bundle with finite order monodromy. Then by Theorem \ref{Massot_normal_gen} we may assume after a suitable isotopy that $\xi$ is horizontal away from a finite collection of tori $T = \sqcup_{i=1}^N \  T_i$ where $\xi$ is vertical. Now let $\mathcal{F}$ be any foliation which has the incompressible tori $T_i$ as closed leaves and is horizontal otherwise. We also require that the sign of the intersection of any fiber with $\mathcal{F}$ agrees with that of $\xi$ on $M \setminus T$ and that the holonomies around curves on either side of the images of each $T_i$ in the base orbifold $B$ of $M$ are rational rotations. Such foliations can easily be constructed by taking any horizontal foliation on the components of $M \setminus T$ that has the correct co-orientation and then spiralling into the torus leaves. Note that all torus leaves are incompressible so that $\mathcal{F}$ is Reebless.

We first thicken the foliation near each torus leaf by inserting a stack of torus leaves $T_i \times[-1,1]$. The resulting foliation can be deformed as in Case 3b in the proof of Theorem \ref{negative_pert1} to obtain a transitive confoliation $\xi'$ which is contact on $N_i = T_i \times[-\frac{1}{4},\frac{1}{4}]$. On $N_i$ this deformation is given by the explicit formula
\[\alpha_t = dz + t\left(\cos(f(z))dx - \sin(f(z))dy\right).\]
Here $z$ denotes the normal coordinate and $f$ is a monotone non-decreasing function such that 
\[f(z) =\begin{cases}
 \theta_-,  & \text{if } z \leq -\frac{1}{4}\\
  \theta_+, & \text{if  } z \geq \frac{1}{4},
\end{cases}\]
where $\theta_-,\theta_+$ are the (negative) angles of the foliations on the negative resp.\ positive side of $T_i$. Since the foliation is positively transverse on one side of the torus and negatively transverse on the other, we may furthermore assume that $\theta_-<\theta_+$ and that the interval $[\theta_-,\theta_+]$ contains either $0$ or $\pi$ but not \emph{both} and thus $\xi'$ becomes vertical precisely once on each $T_i \times[-1,1]$. The confoliation $\xi'$ can then be deformed to a contact structure which is horizontal on $M \setminus T$. By Theorem \ref{Massot_normal_gen} this contact structure is then isotopic to $\xi$. Finally the two step deformation of $\mathcal{F}$ can be achieved via a single deformation in view of (\cite{ETh} Proposition 2.8.3).

If $M$ is a torus bundle with finite order holonomy, then the universally tight contact structures are classified (see \cite{Gir2} \cite{Hon2}) and it is easy to see that they are all deformations of some $T^2$-fibration.
\end{proof}
\begin{rem}
Although the foliations in Theorem \ref{Reebless_def} are in general only Reebless, one can give sufficient conditions so that they are taut. For by replacing $dz$ with $-dz$ in the model used to define the foliation near the vertical tori $T_i$, one can arrange that the torus leaves have any given orientation. In particular, if one can orient the vertical tori $T_i$ of the normal form associated to $\xi$ in such a way that no collection of these tori is null-homologous, then the foliation $\mathcal{F}$ constructed above is taut in view of Theorem \ref{Goodman}.
\end{rem}

\section{Topology of the space of taut and horizontal foliations}\label{horizontal_and_taut}
The topology of the space of representations $\text{Rep}(\pi_1(\Sigma_g),\textit{PSL}(2,\mathbb{R}))$ for a closed surface group of genus $g \geq 2$ has been well studied and its connected components were determined by Goldman \cite{Gol}. Recall that for any topological group $G$ the representation space of a surface group $\pi_1(\Sigma_g)$ is
\[\{(\phi_1,\psi_1,...,\phi_g,\psi_g) \in G^{2g} \ | \ \prod_{i=1}^{g} [\phi_i,\psi_i] = 1\}.\]
In the case of $\textit{PSL}(2,\mathbb{R})$ the connected components of the representation space correspond to the preimages under the map given by the Euler class
$$ \text{Rep}(\pi_1(\Sigma_g),\textit{PSL}(2,\mathbb{R})) \stackrel{e} \longrightarrow [2-2g, 2g-2].$$
Moreover, the quotient of the connected component with maximal Euler class under the natural conjugation action is homeomorphic to Teichm\"uller space and is hence contractible. On the other hand the topology of the representation space $\text{Rep}(\pi_1(\Sigma_g),\text{Diff}_+(S^1))$ endowed with the natural $C^{\infty}$-topology, which can be interpreted as the space of foliated $S^1$-bundles after quotienting out by conjugation, is not as well understood. It would perhaps be natural to conjecture that the map induced by the inclusion 
$$G = \textit{PSL}(2,\mathbb{R}) \hookrightarrow \text{Diff}_+(S^1)$$
induces a weak homotopy equivalence on representation spaces or at least a bijection on path components. It is known that both representation spaces are path connected in the case of the maximal component (cf.\  \cite{Ghy}, \cite{Mat}). Indeed, results of Matsumoto and Ghys show that any maximal representation is smoothly conjugate to one that is Fuchsian. 


On the other hand, we will show that this is not the case for the space of representations with non-maximal Euler class. The basic observation is that the cyclic $n$-fold cover $G_n$ of $G=\textit{PSL}(2,\mathbb{R})$ also acts smoothly on the circle via $\mathbb{Z}_n$-equivariant diffeomorphisms so that there is a natural map
 $$\text{Rep}(\pi_1(\Sigma_g),G_n) \longrightarrow \text{Rep}(\pi_1(\Sigma_g),\text{Diff}_+(S^1)).$$
In general the images of these maps lie in different path components for different values of $n$ and fixed Euler class. 

We shall need some preliminaries concerning the relationship between horizontal foliations on $S^1$-bundles and their holonomy representations. For this we shall identify the universal cover $\widetilde{\text{Diff}}_+(S^1)$ of $\text{Diff}_+(S^1)$ with the group of diffeomorphisms of $\mathbb{R}$ that are periodic with respect to integer translations. We then consider
$$\widetilde{\text{Rep}}_e(\pi_1(\Sigma_g),\widetilde{\text{Diff}}_+(S^1)) = \{(\phi_1,\psi_1,...\, ,\phi_g,\psi_g) \in \widetilde{\text{Diff}}_+(S^1)^{2g} \ \  | \ \ \prod_{i=1}^{g} [\phi_i,\psi_i]  = \text{Tr}_{-e}\},$$
where $\text{Tr}_{-e}$ denotes a translation by an integer $-e$, where $e$ is the Euler number of the associated oriented $S^1$-bundle $M(e)$. Note that the space $\widetilde{\text{Rep}}_e(\pi_1(\Sigma_g),\widetilde{\text{Diff}}_+(S^1))$ can be identified with the space of representations $\text{Rep}(\pi_1(M(e)),\widetilde{\text{Diff}}_+(S^1))$ that send the fiber class $[S^1]$ to the translation $\text{Tr}_{1}$. The natural map
$$\widetilde{\text{Rep}}_e(\pi_1(\Sigma_g),\widetilde{\text{Diff}}_+(S^1)) \longrightarrow \text{Rep}_e(\pi_1(\Sigma_g), \text{Diff}_+(S^1))$$
is an abelian covering map, whose fiber can be identified with $H^1(\Sigma_g,\mathbb{Z})$ as a torsor. Finally we let $\text{Fol}_{hor}(M(e))$ denote the space of horizontal foliations on the bundle $M(e)$ with Euler class $e$, which then inherits a natural topology as a subspace of the space of sections of the oriented Grassmannian bundle, which can in turn be identified with the unit cotangent bundle $ST^*M(e)$. After the choice of a base point as well as a standard generators $a_i,b_i$ of $\pi_1(\Sigma_g)$ and a trivialisation of $M(e)$ over a neighbourhood of the bouquet of circles $a_1\vee b_1\vee \ldots \vee a_g\vee b_g$ one obtains a map
$$\text{Fol}_{hor}(M(e)) \longrightarrow \widetilde{\text{Rep}}_e(\pi_1(\Sigma_g),\widetilde{\text{Diff}}_+(S^1)).$$
This is obtained by considering the holonomy around a loop in $\Sigma_g$, which naturally gives a path in $\text{Diff}_+(S^1)$ with respect to the chosen trivialisation, and the homotopy class of this path then gives an element in $\widetilde{\text{Diff}}_+(S^1)$. Conversely given any element in $\widetilde{\text{Rep}}_e(\pi_1(\Sigma_g),\widetilde{\text{Diff}}_+(S^1))$ one can construct foliations with the given holonomy in a continuous manner.
\begin{lem}\label{holonomy_foliation}
The map $\text{Fol}_{hor}(M(e)) \longrightarrow \widetilde{\text{Rep}}_e(\pi_1(\Sigma_g),\widetilde{\text{Diff}}_+(S^1))$ admits a section. Moreover, any two foliations with the same associated holonomy representations are related by a bundle automorphism that is isotopic to the identity.
\end{lem}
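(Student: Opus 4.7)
The plan is to prove both claims via an explicit suspension-type construction together with the fact that $\widetilde{\text{Diff}}_+(S^1)$ is contractible (being the universal cover of $\text{Diff}_+(S^1) \simeq S^1$).

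For the section, fix once and for all a fundamental $4g$-gon $P$ for $\Sigma_g$ realising the standard presentation $\pi_1(\Sigma_g) = \langle a_1,b_1,\dots,a_g,b_g \mid \prod_i[a_i,b_i]\rangle$, together with a fixed description of $M(e)$ as $P \times S^1$ modulo edge identifications encoding a bundle of Euler class $e$ (the identifications along $a_i, b_i$ can be taken to be the identity on the $S^1$-factor, with the Euler twist concentrated in a small neighbourhood of the vertex, where the holonomy of the bundle around $\partial P$ equals $\text{Tr}_e$). Given $\rho \in \widetilde{\text{Rep}}_e$, build a horizontal foliation on $P \times \mathbb{R}$ by pulling back the product foliation through an explicit fiber-preserving diffeomorphism $\Phi_\rho$ whose holonomy along $a_i$ (resp.\ $b_i$) equals $\rho(a_i)$ (resp.\ $\rho(b_i)$). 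One constructs $\Phi_\rho$ via a smooth partition of unity on $P$, starting from local models near the edges and interpolating; everything can be chosen to depend smoothly on $\rho$. The relation $\prod_i[\rho(a_i),\rho(b_i)] = \text{Tr}_e$ guarantees that the boundary holonomy of the constructed foliation around $\partial P$ matches the Euler twist of the gluing, so the foliation descends to a horizontal foliation $\mathcal{F}_\rho$ on $M(e)$ with holonomy $\rho$, giving the desired section.

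For the uniqueness statement, let $\mathcal{F}_1, \mathcal{F}_2$ be horizontal foliations on $M(e)$ with the same holonomy $\rho$. Lift both to $\widetilde{M(e)} \cong \widetilde{\Sigma}_g \times \mathbb{R}$. Since each foliation is transverse to the (simply connected) $\mathbb{R}$-fibers, each leaf meets each fiber in exactly one point, and after fixing a base point $x_0 \in \widetilde{\Sigma}_g$ we obtain developing diffeomorphisms $D_i: \widetilde{\Sigma}_g \times \mathbb{R} \to \widetilde{\Sigma}_g \times \mathbb{R}$ that are fiber-preserving, take the product foliation to $\widetilde{\mathcal{F}}_i$, and are the identity on $\{x_0\} \times \mathbb{R}$. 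The composition $\Psi = D_2 \circ D_1^{-1}$ is fiber-preserving, and the coincidence of the two holonomy representations forces $\Psi$ to intertwine the $\pi_1(M(e))$-actions arising from $\mathcal{F}_1$ and $\mathcal{F}_2$; it therefore descends to a bundle automorphism $\varphi$ of $M(e)$ covering the identity on $\Sigma_g$ and satisfying $\varphi_*\mathcal{F}_1 = \mathcal{F}_2$.

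It remains to show that $\varphi$ is isotopic to the identity. On each $\mathbb{R}$-fiber of $\widetilde{M(e)}$, $\Psi$ commutes with $\text{Tr}_1$ (since the center of $\pi_1(M(e))$ acts by $\text{Tr}_1$), so it lies in $\widetilde{\text{Diff}}_+(S^1)$. Viewed as a $\pi_1(\Sigma_g)$-equivariant map $\widetilde{\Sigma}_g \to \widetilde{\text{Diff}}_+(S^1)$, with target action by conjugation via $\rho$, $\Psi$ becomes a section of the associated bundle $\widetilde{\Sigma}_g \times_{\pi_1,\text{Ad}\,\rho} \widetilde{\text{Diff}}_+(S^1) \to \Sigma_g$. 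Since its fiber $\widetilde{\text{Diff}}_+(S^1)$ is contractible, the space of sections is itself contractible, so $\Psi$ is connected to the identity section through a path of equivariant maps; this path descends to an isotopy of bundle automorphisms from $\text{id}_{M(e)}$ to $\varphi$. I expect the principal technical hurdle to be the first paragraph, namely arranging $\Phi_\rho$ so that its holonomies are \emph{exactly} $\rho(a_i), \rho(b_i)$ (not merely conjugate to them) while depending smoothly on $\rho$ in the $C^\infty$-topology; the rest of the argument reduces cleanly to the contractibility of $\widetilde{\text{Diff}}_+(S^1)$.
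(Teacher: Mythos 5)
Your proof is correct, and for the first half it is essentially the paper's argument in different clothing: the paper uses the cell structure $\Sigma_g = (a_1\vee b_1\vee\dots\vee a_g\vee b_g)\cup D^2$ rather than the $4g$-gon, builds the foliation over strips around the $1$-skeleton by pushing forward the product foliation along \emph{linear} paths in $\widetilde{\text{Diff}}_+(S^1)$ (affine interpolation of $1$-periodic diffeomorphisms of $\mathbb{R}$ stays in the group), and then extends over the $2$-cell by lifting the boundary holonomy loop --- contractible precisely because $\prod_i[\rho(a_i),\rho(b_i)]=\text{Tr}_e$ matches the clutching of $M(e)$ --- and coning it off linearly. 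This cell-by-cell scheme dissolves the ``principal technical hurdle'' you flag at the end: linearity makes the edge holonomies exactly $\rho(a_i),\rho(b_i)$ and makes continuous (indeed $C^k$ for all $k$) dependence on $\rho$ manifest, with the only obstruction localised in the $2$-cell where it vanishes for Euler-class reasons. For the uniqueness statement your route is genuinely different: you pass to the universal cover, use developing maps to produce the intertwining automorphism $\Psi$ globally, and contract it using that the space of sections of the associated $\text{Ad}\,\rho$-bundle with contractible fiber $\widetilde{\text{Diff}}_+(S^1)$ is connected; the paper instead isotopes the two foliations to agree successively over a disc, then a neighbourhood of the $1$-skeleton, and finally extends the isotopy over the $2$-cell rel boundary using the same contractibility. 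Your version is the cleaner conceptual packaging (it is the standard flat-bundle classification and yields the automorphism in one stroke), at the cost of invoking obstruction theory for section spaces of an infinite-dimensional fiber; the paper's version keeps everything elementary and cellular. One small point to be careful about in your write-up: the naive linear homotopy $s\Psi_x+(1-s)\mathrm{id}$ is \emph{not} $\rho$-equivariant since conjugation does not respect affine combinations, so you really do need the section-space (or cell-by-cell equivariant extension) argument rather than a pointwise convex contraction; as stated your proof does invoke the former, so there is no gap.
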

\begin{proof}
We consider the standard cell structure
$$\Sigma_g = (a_1\vee b_1 \vee \ldots \vee  a_g \vee b_g) \cup D^2,$$
where $a_i,b_i$ denote representatives of the standard generators of $\pi_1(\Sigma_g)$. Now the bundle $M(e)$ is trivial over a neighbourhood $N$ of the $1$-skeleton $\Sigma_g^{(1)}$ and we choose a trivialisation $N \times S^1$. Note that $N$ can be described as the union of strips $\widehat{a}_i\times [-\epsilon,\epsilon], \widehat{b}_i \times [-\epsilon,\epsilon]$ attached to a small disc $D_0$ containing the base point $p_0$, where $\widehat{a}_i,\widehat{b}_i$ are closed intervals contained in $a_i,b_i$ respectively that are disjoint from the base point $p_0$. Given an element $\widetilde{\rho} \in \widetilde{\text{Rep}}(\pi_1(\Sigma_g),\widetilde{\text{Diff}}_+(S^1))$ we let $\{\phi^i_t\}_{t \in [0,1]}$ be the linear path joining $\widetilde{\rho}(a_i)$ to the identity. We define  $\{\psi^i_t\}_{t \in [0,1]}$ for $\widetilde{\rho}(b_i)$ in the same way. By reparametrising, we may assume that these paths are constant near the end points. We then define a foliation on $N \times S^1$ by pushing forward the product foliation using the paths $\phi^i_t,\psi^i_t$ on the parts of $M(e)$ lying over $\widehat{a}_i\times [-\epsilon,\epsilon], \widehat{b}_i \times [-\epsilon,\epsilon]$ and extending by the trivial product foliation on $D_0 \times S^1$.

We must then extend this over the $2$-cell $D^2$. Since the foliation is already defined over a neighbourhood of the $1$-skeleton, we only need to extend it over a slightly smaller disc $D'$ contained in the interior of $D^2$. To this end we choose a trivialisation of $M(e)$ over $D'$ and note that the foliation induces a loop $\gamma_t$ in $\text{Diff}_+(S^1)$ given as the holonomy around $\partial D'$ for some given basepoint $q$ near $p_0$. The loop $\gamma_t$ is contractible since $\widetilde{\rho}$ was an element in $ \widetilde{\text{Rep}}_e(\pi_1(\Sigma_g),\widetilde{\text{Diff}}_+(S^1))$ and thus lifts to a loop $\widetilde{\gamma}_t$ in $\widetilde{\text{Diff}}_+(S^1)$. This loop then extends to a map over $D'$ using linear paths to the identity. Furthermore, the composition 
$$D' \to \widetilde{\text{Diff}}_+(S^1) \to \text{Diff}_+(S^1)$$
determines a fiber preserving isotopy $\Phi_{\rho}$ of $D' \times S^1$ so that the pushforward of the product foliation $(\Phi_{\rho})_*\mathcal{F}_{prod}$ extends the foliation on $N \times S^1$. By construction we may assume that the foliation is the pullback of the suspension foliation determined by $\gamma_t$ on $\partial D' \times S^1$ via radial projection near the boundary. Thus we may assume that the resulting foliation $\mathcal{F}_{\rho}$ on $M(e)$ is smooth. The map $\rho \mapsto \mathcal{F}_{\rho}$ then defines the desired section, since the entire construction depends continuously on $\rho$.

Now suppose $\mathcal{F}_0, \mathcal{F}_1$ have the same holonomy representations. After an initial fiber preserving isotopy, we may assume that the foliations agree over a small disc $D_0$ near the base point $p_0$. Then since the holonomy representations agree, there are fiberwise automorphisms over $D_0 \cup \Sigma_g^{(1)}$ that are the identity near $D_0$ so that the induced $1$-dimensional foliations over $(a_1\vee b_1 \vee \ldots \vee  a_g \vee b_g) \setminus D_0$ agree. After a further fiberwise isotopy we may then assume that the foliations agree over a neighbourhood $N$ of the $1$-skeleton. Using the contractibility of $\widetilde{\text{Diff}}_+(S^1)$ one can then extend this isotopy over the $2$-cell $D^2$ relative to the boundary, which then gives the desired fiberwise isotopy.
\end{proof}
\begin{rem}\label{continuity_hol}
Note that Lemma \ref{holonomy_foliation} holds with respect to the $C^k$-topology for any $0\leq k \leq \infty$, where we consider the space of horizontal foliations as a subset of the space of sections of the unit cotangent bundle of the total space $M(e)$. This is because the construction uses only linear paths in $\widetilde{\text{Diff}}_+(S^1)$ and certain (fixed) cut-off functions, so that the tangent plane fields of the associated foliations depends continuously on the holonomy. In addition, the proof of Lemma \ref{holonomy_foliation} easily extends to show that the fiber of the holonomy map is in fact (weakly) contractible. We remark that this is a special feature of codimension-$1$ foliations and need not hold in higher codimension.
\end{rem}

\begin{rem}
We also note that the action of the full group of bundle automorphisms $\text{Aut}(M(e))$ on $\text{Fol}_{hor}(M(e))$ descends to an action on $\widetilde{\text{Rep}}_e(\pi_1(\Sigma_g),\widetilde{\text{Diff}}_+(S^1))$, which in turn corresponds to the action by the group of deck transformations $\text{Aut}(M(e))/\text{Aut}_0(M(e))$ of the covering map to $\text{Rep}_e(\pi_1(\Sigma_g), \text{Diff}_+(S^1))$ which can be identified with $H^1(\Sigma_g,\mathbb{Z})$.
\end{rem}
We have the following useful consequence of Lemma \ref{holonomy_foliation}.
\begin{cor}\label{continuity}
Let $\mathcal{F}$ be a foliation on $M(e)$ with holonomy $\widetilde{\rho}$ and let $\widetilde{\rho}_t $ be a $C^k$-continuous path of representations in $ \widetilde{\text{Rep}}_e(\pi_1(\Sigma_g),\widetilde{\text{Diff}}_+(S^1))$ with $\widetilde{\rho}_0=\widetilde{\rho}$. Then there is a family of foliations $\mathcal{F}_t$ whose tangent distributions depend continuously on $t$ in the $C^k$-norm for any $0\leq k \leq \infty$ and such that $\mathcal{F}_0 = \mathcal{F}$. 

Similarly, if $\widetilde{\rho}_n$ is a sequence converging in the $C^k$-sense to $\widetilde{\rho}$ then there is a sequence of horizontal foliations $\mathcal{F}_n$ so that the tangent distributions $T\mathcal{F}_n$ converge to $T\mathcal{F}$ in the $C^k$-sense.
\end{cor}
\begin{proof}
By Lemma \ref{holonomy_foliation} as well as Remark \ref{continuity_hol} there is a $C^k$-continuous path of horizontal foliations $\mathcal{F}_t$ on $M(e)$ whose holonomy representations are precisely  $\widetilde{\rho}_t $. Moreover, by the second part of Lemma \ref{holonomy_foliation} the original foliation $\mathcal{F}$ is fiberwise diffeomorphic to $\mathcal{F}_0$ by some diffeomorphism $\varphi$ so that the desired path is given by applying $\varphi$ to the path $\mathcal{F}_t$. The proof in the case of a sequence of representations follows \emph{mutatis mutandis}.
\end{proof}
\noindent Now that we have clarified the relationship between holonomy representations and horizontal foliations, we may now show that the space of representations $\text{Rep}(\pi_1(\Sigma_g), \text{Diff}_+(S^1))$ is in general not path connected.
\begin{thm}\label{Rep_disconnected}
Let $\#Comp(e)$ denote the number of path components of the space of representations $\text{Rep}_e(\pi_1(\Sigma_g), \text{Diff}_+(S^1))$ with fixed Euler class $e \neq 0$ such that $e$ divides $2g-2 > 0$ and write $2g-2 = n\thinspace e$. Then the following holds:
\[\#Comp(e) \geq n^{2g} +1.\]
\end{thm}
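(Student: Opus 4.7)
The plan is to exhibit $n^{2g}+1$ representations in $\text{Rep}_e(\pi_1(\Sigma_g), \text{Diff}_+(S^1))$ lying in pairwise distinct path components, distinguished by the isotopy class of the horizontal contact structure obtained by linearly perturbing the associated horizontal foliation on $M(e)$. The mechanism is Proposition \ref{fam_def} combined with Gray stability: a path in $\text{Rep}_e$ lifts via Lemma \ref{holonomy_foliation} to a smooth path of horizontal foliations, each admitting a linear deformation, so Proposition \ref{fam_def} yields a smooth family of horizontal contact structures whose endpoints are isotopic by Gray stability. Hence pairwise non-isotopic contact structures (for suitable lifts to $\widetilde{\text{Rep}}_e$) detect distinct components.

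For the main $n^{2g}$ representations, I would fix a Fuchsian $\rho_F \in \text{Rep}_{2g-2}(\pi_1(\Sigma_g), \text{PSL}(2,\mathbb{R}))$. Since $n\mid(2g-2)$, the obstruction to lifting $\rho_F$ to the cyclic $n$-fold cover $G_n$ of $\text{PSL}(2,\mathbb{R})$ vanishes and the set of lifts is a $(\mathbb{Z}/n)^{2g}$-torsor; composing each with the embedding $G_n\hookrightarrow\text{Diff}_+(S^1)$ yields $n^{2g}$ distinct representations $\rho_1,\ldots,\rho_{n^{2g}}\in\text{Rep}_e$. The foliation associated to $\rho_k$ on $M(e)$ is the pullback of the Fuchsian foliation on $ST^*\Sigma_g = M(2g-2)$ under the $n$-fold fiberwise cover $M(e)\to ST^*\Sigma_g$ encoded by the lift; being an $n$-fold cover of a minimal foliation it has no closed leaves, so Theorem \ref{spec_def}(2) yields a linear perturbation to a horizontal contact structure $\xi_k$, which has twisting number $-n$ by Proposition \ref{cover_form}. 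Since $g>0$ and $-b=e<2g-2$, Theorem \ref{normal_form} classifies $\xi_k$ up to isotopy by the covering homotopy class of $p_{\xi_k}:\widehat{M(e)}\to ST^*R$; matching the $n^{2g}$ lifts of $\rho_F$ with $n^{2g}$ pairwise non-homotopic such covers then produces $n^{2g}$ pairwise non-isotopic contact structures, hence $n^{2g}$ pairwise distinct components of $\text{Rep}_e$.

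For the additional component, Proposition \ref{Seif_fol} produces a horizontal foliation on $M(e)$ whose holonomy representation $\rho_0\in\text{Rep}_e(\pi_1(\Sigma_g),\text{PSL}(2,\mathbb{R}))$ contains a hyperbolic element. This foliation admits a linear deformation by Theorem \ref{spec_def}(2) (closed leaves, if any, have linear holonomy at the hyperbolic fixed points), and the argument from the proof of Theorem \ref{negative_pert1} (Case 1) identifies the resulting contact structure $\xi_0$ as having twisting number $-1$. Since $-1\neq -n$ for $n\geq 2$, $\xi_0$ is not isotopic to any $\xi_k$, so $\rho_0$ lies in a further distinct component of $\text{Rep}_e$, completing the count of $n^{2g}+1$.

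\textbf{Main obstacle.} The hard part will be the matching in the second paragraph of the $(\mathbb{Z}/n)^{2g}$-torsor of $G_n$-lifts of $\rho_F$ with $n^{2g}$ pairwise non-homotopic cover classes of normal-form maps $p_\xi$. Concretely, one must verify that the deck-transformation action of $H^1(\Sigma_g,\mathbb{Z})$ on $\widetilde{\text{Rep}}_e$ from Remark \ref{continuity} permutes the $\widetilde{\text{PSL}}(2,\mathbb{R})$-lifts of $\rho_F$ through the sublattice $nH^1(\Sigma_g,\mathbb{Z})$, so that the $(\mathbb{Z}/n)^{2g}$-quotient descends injectively into $\text{Rep}_e$ and each orbit produces a distinct cover class via the normal-form classification of Theorem \ref{normal_form}. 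This is a direct calculation carried out in the trivialization $\widehat{M(e)} \cong R\times S^1$ underlying the normal form.
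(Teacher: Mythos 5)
Your proposal is correct and follows essentially the same route as the paper: a path of representations is converted, via Lemma \ref{holonomy_foliation}, Proposition \ref{fam_def} and Gray stability, into an isotopy of linearly perturbed contact structures, which are then separated by the twisting number (one component with $t=-1$) and by the class in $H^1(\Sigma_g,\mathbb{Z}_n)$ of the fiberwise $n$-fold cover attached to the normal form ($n^{2g}$ components with $t=-n$). The only cosmetic differences are that the paper obtains the vertical contact structures and the covering-class invariance from Giroux's Th\'eor\`eme 3.1 and Lemme 3.9 rather than from explicit $G_n$-lifts of a Fuchsian representation and Massot's normal-form uniqueness, and the descent issue you flag as the main obstacle is precisely the paper's observation that only the fiberwise isomorphism class of the cover (not its based homotopy class) is a deformation invariant of the representation.
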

\begin{proof}
By (\cite{Gir} Th\'eor\`eme 3.1) there are horizontal contact structures $\xi_1$ and $\xi_n$ with twisting number $-1$ and $-n$ respectively on the $S^1$-bundle $M(e)$ with Euler class $e$. Furthermore, this contact structure $\xi_n$ can be made vertical, i.e.\ tangent to the $S^1$-fibers. Both contact structures $\xi_1$ and $\xi_n$ are linear deformations of a horizontal foliation by Theorem \ref{negative_pert1} and we let $\rho_1,\rho_n$ be the associated holonomy representations in $\text{Rep}(\pi_1(\Sigma_g), \text{Diff}_+(S^1))$. Assume that $\rho_{t}$ is a smooth family of representations joining $\rho_1$ to $\rho_{n}$. We first lift this path to $ \widetilde{\text{Rep}}_e(\pi_1(\Sigma_g),\widetilde{\text{Diff}}_+(S^1))$ and then let $\mathcal{F}_t$ denote the smooth family of foliations given by Lemma \ref{holonomy_foliation}. Note that the foliations we obtain in this way agree with the original foliations up to fiber preserving automorphism of the total space of the associated bundle $M(e)$. In particular, the twisting numbers of the contact structures obtained by linear perturbation of $\mathcal{F}_0$ and $\mathcal{F}_1$ agree with those of $\xi_1$ resp.\ $\xi_{n}$.

Since each foliation in the family $\mathcal{F}_t$ cannot have any closed leaves and $M(e)$ does not fiber over $S^1$, we may perturb the family linearly to a 1-parameter family of contact structures by Proposition \ref{fam_def}. It then follows from Proposition \ref{lin_def} that $\xi_1$ is isotopic to $\xi_{n}$, which is a contradiction. Thus both $\rho_1$ and $\rho_{n}$ lie in distinct components of $\text{Rep}(\pi_1(\Sigma_g), \text{Diff}_+(S^1))$.

The vertical contact structure $\xi_n$ determines a fiberwise $n$-fold cover of the unit cotangent bundle $ST^*\Sigma_g$. By (\cite{Gir} Lemme 3.9) the isotopy class of the associated $n$-fold covering is a deformation invariant of $\xi_n$. Since all foliations are only well defined up to fiber preserving automorphisms of $M(e)$, it follows that only the fiberwise isomorphism class of the covering is a deformation invariant of the associated foliation and hence of $\rho_n$. Moreover, isomorphism classes of fiberwise $n$-fold coverings are in one to one correspondence with elements in $H^1(\Sigma_g,\mathbb{Z}_n)$ and it follows that the numbers of path components of representations whose perturbations have twisting number $-n$ is at least $n^{2g}$. From this we conclude that
$$\#Comp(e) \geq n^{2g} +1. \qedhere$$
\end{proof}
\begin{rem}
For the sake of concreteness let us consider the representations $\rho_{2d},\rho_{st}$ given by a $(2d)$-fold fiberwise cover of the suspension of a Fuchsian representation in $\textit{PSL}(2,\mathbb{R})$ and the stabilisation of some Fuchsian representation respectively. More precisely, $\rho_{st}$ is the composition of a Fuchsian representation of a surface of Euler characteristic $\frac{1}{2d}(2-2g) = 2-2g'$ and a collapse map $\Sigma_g \to \Sigma_{g'}$ which collapses all but $g'$ handles to a point. These representations have the same Euler class but lie in different components of $\text{Rep}(\pi_1(\Sigma_g), \text{Diff}_+(S^1))$ since the corresponding contact structures have twisting $-n=-2d$ in the first case and $-1$ in the second. This then answers a question raised by Y.\ Mitsumatsu and E.\ Vogt in studying certain turbulisation constructions for 2-dimensional foliations on 4-manifolds.
\end{rem}
\begin{rem}
Let $G_n$ denote the $n$-fold covering of $\textit{PSL}(2,\mathbb{R})$. The proof of Theorem \ref{Rep_disconnected} shows that for fixed $n$ the components of the representation spaces $\text{Rep}(\pi_1(\Sigma_g), G_n)$ as computed by Goldman \cite{Gol} are distinguished by their contact perturbations. This applies both in the case that $n$ divides $2g-2$ and where it does not, although in the latter case there is only one component (\cite{Gol} Lemma 10.5) so the statement is uninteresting.
\end{rem}
Larcanch\'{e} \cite{Lar} also considered the problem of deforming taut foliations through certain restricted classes of foliations. She noted that on $T^2$-bundles over $S^1$ with Anosov monodromy of a certain kind, the stable and unstable foliations $\mathcal{F}_{s},\mathcal{F}_{u}$ cannot be deformed to one another through foliations without torus leaves. This uses Ghys and Sergiescu's classification results \cite{GhS} for foliations without closed leaves on such manifolds. However, $\mathcal{F}_{s}$ and $\mathcal{F}_{u}$ can be deformed to one another through taut foliations: one first spins both foliations along a fixed torus fiber to obtain foliations $\mathcal{F}'_{s},\mathcal{F}'_{u}$ with precisely one closed torus leaf $T$. On the complement of $T$ one has a foliation by cylinders on $T^2 \times (0,1)$ intersecting each fiber in a linear foliation. It is then easy to construct a deformation between $\mathcal{F}'_{s}$ and $\mathcal{F}'_{u}$ through foliations with one torus leaf which is homologically non-trivial. Thus we conclude that one can indeed deform $\mathcal{F}_{s}$ to $\mathcal{F}_{u}$ through taut foliations.

In view of this, it remains to find taut foliations that cannot be deformed to one another through taut foliations, although their tangent distributions are homotopic. We give two types of examples of this phenomenon: the first uses deformations and contact topology and the second uses the special structure of taut foliations on cotangent bundles.
\begin{thm}\label{Seifert_dis}
The space of taut foliations is in general not path connected on small Seifert fibered spaces.
\end{thm}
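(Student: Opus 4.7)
The plan is to construct two taut foliations $\mathcal{F}_0,\mathcal{F}_1$ on a suitable small Seifert fibered space $M$ whose linear contact perturbations have distinct twisting numbers, and then use Vogel's uniqueness theorem to rule out any smooth path of taut foliations between them.

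First, I would produce $M$ and the foliations. By Proposition \ref{existence_crit}, the small Seifert fibered spaces admitting taut foliations are precisely those admitting horizontal foliations in both orientations. Choose such an $M$ that is hyperbolic, with normalised invariants $(0,b,\beta_1/\alpha_1,\beta_2/\alpha_2,\beta_3/\alpha_3)$ arranged so that \emph{two} universally tight horizontal contact structures $\xi_0,\xi_1$ exist with $t(\xi_0)=-1$ and $t(\xi_1)=-n$ for some $n\geq 2$, and so that each $\xi_i$ arises as a linear deformation of a horizontal foliation $\mathcal{F}_i$ via (the applicable cases of) Theorem \ref{negative_pert2}. Typically $\xi_1$ will be obtained from a vertical contact structure via the fiberwise branched cover of Proposition \ref{cover_form}, while $\xi_0$ comes from the explicit $\widetilde{\mathrm{PSL}}(2,\mathbb{R})$--suspension construction in case (3) of Theorem \ref{negative_pert2}. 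By varying the Seifert data (for instance by taking $n\to\infty$ and adjusting $b$ accordingly) one obtains infinitely many such examples.

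Second, $T\mathcal{F}_0$ and $T\mathcal{F}_1$ are both positively transverse to the Seifert fibration, and the space of oriented plane fields positively transverse to a fixed line field is fibered over $M$ by affine planes, hence connected. So $T\mathcal{F}_0$ and $T\mathcal{F}_1$ are homotopic as oriented plane fields, which verifies the homotopy hypothesis of Theorem \ref{taut_not_conn}.

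The crux is the rigidity step. Suppose, for contradiction, that a smooth path $\{\mathcal{F}_t\}_{t\in[0,1]}$ of taut foliations joins $\mathcal{F}_0$ to $\mathcal{F}_1$. Since $M$ is a hyperbolic small Seifert fibered space it contains no incompressible torus, so Novikov's theorem forces every Reebless foliation on $M$ to be torus-leaf free; the infinite fundamental group similarly prevents any $\mathcal{F}_t$ from being a foliation by planes or cylinders only. Thus Vogel's theorem applies to each $\mathcal{F}_t$, yielding a $C^0$-neighbourhood $\mathcal{U}_t$ of $T\mathcal{F}_t$ in which all positive contact structures are mutually isotopic. By compactness of $[0,1]$ and $C^0$-continuity of $t\mapsto T\mathcal{F}_t$, finitely many $\mathcal{U}_{t_i}$ cover the path, and on each overlap the isotopy classes of contact approximations agree. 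Therefore the isotopy class of a positive contact approximation is constant along the path, identifying $\xi_0$ and $\xi_1$ up to isotopy. Since the twisting number is an isotopy invariant and $-1 = t(\xi_0) \neq t(\xi_1) = -n$, this is a contradiction. The unoriented refinement follows because on these manifolds (with three distinct exceptional orders) every self-diffeomorphism preserves the Seifert fibration up to isotopy, so $|t(\xi)|$ is a diffeomorphism invariant, and reversing the coorientation of $\xi$ does not change $t(\xi)$.

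The main obstacle I expect is the first step: exhibiting a single $M$ together with two horizontal foliations whose linear deformations can be verified, via Theorem \ref{negative_pert2}, to have prescribed and \emph{distinct} twisting numbers while the contact structures remain universally tight. One must simultaneously satisfy the realisability inequality of case (3) for $\mathcal{F}_0$ and the branched-cover/vertical hypothesis for $\mathcal{F}_1$, which constrains $(b,\beta_i/\alpha_i)$ delicately; once this bookkeeping is done, the compactness-plus-Vogel argument is essentially formal.
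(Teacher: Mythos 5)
Your overall strategy --- distinguish the two taut foliations by the isotopy classes, in fact the twisting numbers, of their contact perturbations, and propagate that isotopy class along any putative path of taut foliations --- is exactly the paper's, and your rigidity step via Vogel's theorem is explicitly sanctioned in the paper as an alternative route. The paper's own argument is more elementary at this step: on a non-Haken small Seifert fibered space every taut foliation is without closed leaves, so the whole family admits linear deformations and Propositions \ref{fam_def} and \ref{lin_def} produce the isotopy directly; Vogel's theorem buys the stronger $C^0$-statement at the cost of quoting a much harder result. One small correction in your version of that step: an infinite fundamental group does not rule out foliations by planes or by cylinders only ($T^3$ carries both); what rules them out here is that such foliations force $M$ to be $T^3$ or a torus bundle, which a hyperbolic small Seifert fibered space is not.

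The genuine gap is your first step, which you defer as ``bookkeeping'' but which is in fact the substance of the proof. You must exhibit one small Seifert fibered space carrying two universally tight contact structures with distinct twisting numbers, \emph{each} realised as a linear perturbation of a taut foliation, and this is a delicate existence question: the classification of tight contact structures on small Seifert fibered spaces generally requires Heegaard Floer input, and one must check that both contact structures fall under one of the hypotheses of Theorem \ref{negative_pert2}. The paper resolves this concretely on $M=-\Sigma(2,3,6k-1)$: the vertical contact structure has twisting $-(6k-7)$ and is a linear perturbation of a taut foliation by Theorem \ref{negative_pert2}; Massot's Poincar\'e--Hopf-type constraint shows that any horizontal contact structure on $M$ has twisting $-(6l-1)$ with $1\le l\le k-1$; the $(6l-1)$-fold fiberwise branched cover of Proposition \ref{fibre_cov} produces a second horizontal foliation whose perturbation $p^*\xi_l$ has twisting controlled by submultiplicativity under covers, and a coprimality argument ($\gcd(6l-1,6k-7)=1$ for most $l$) forces this twisting to differ from $-(6k-7)$. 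None of this is formal, and without it (or a substitute) the theorem is not proved. Note also that your intended normalisation $t(\xi_0)=-1$ does not occur in these examples --- the admissible twisting numbers are all of the form $-(6l-1)$ --- so even the shape of the example needs care.
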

\begin{proof}
We let $M = -\Sigma(2,3,6k-1)$ be the link of the complex singularity $z_1^{2} +z_2^3 + z_3^{6k-1} = 0$ taken with the opposite orientation, which has Seifert invariants $(0,-2,\frac{1}{2},\frac{2}{3},\frac{5k-1}{6k-1})$. As noted in (\cite{Mas}, p.\ 1746) the Seifert manifold $M$ admits a vertical contact structure $\xi_{vert}$ that has twisting number $-(6k-7)$ if $k>1$, which is then a linear deformation of a taut foliation $\mathcal{F}$ by Proposition \ref{negative_pert2}. One further checks that the following holds
\begin{equation}\label{nec_cond}
-2n + \Big\lceil\frac{n}{2} \Big\rceil+\Big\lceil\frac{2n}{3} \Big\rceil+\Big\lceil\frac{n(5k-1)}{6k-1} \Big\rceil = 2
\end{equation}
if and only if $n = 6l-1$ and $1\leq l \leq k-1$. By Theorem \ref{normal_form_3} this is a necessary condition for the existence of a horizontal contact structure on $M$ with twisting number $-n$. Moreover, the quotient space of the $(6l-1)$-fold cover $M \stackrel{p} \longrightarrow M'_l$ given by Proposition \ref{fibre_cov} has normalised invariants $(0,-1,\frac{1}{2},\frac{1}{3},\frac{k-l}{6k-1})$ and thus admits a horizontal foliation $\mathcal{F}_{l}$ by Theorem \ref{genus_zero_hor}.
Since $\mathcal{F}_{l}$ cannot have any closed leaves and $M'_l$ does not fiber over $S^1$, the foliation $\mathcal{F}_{l}$ can be linearly deformed to a horizontal contact structure $\xi_l$. Now the corresponding necessary condition for the existence of a horizontal contact structure on $M'_l$ with twisting number $t(\xi_l)$ is obtained by substituting $n = -(6l -1)t(\xi_l)$ into equation (\ref{nec_cond}) and it follows that
$$-(6l-1)t(\xi_l) = 6l'-1, \text{ for some } l\leq l' \leq k-1.$$
We observes that the (negative) twisting number of a contact structure is sub-multiplicative under covering maps. Thus, if $6l-1$ is coprime to $6k-7$, then we deduce that 
$$-t(p^*\xi_l) \leq -(6l-1)t(\xi_l) < 6k-7$$
so that $\xi' = p^*\xi_l$ cannot be isotopic to $\xi_{vert}$. Note that $6l-1$ will be coprime to $6k-7$ for all values of $l$ such that $l >\frac{1}{6}( \sqrt{6k-7}+1)$ with at most one exception. 

Since $M$ is non-Haken all taut foliations are without closed leaves. Thus any path of taut foliations joining $\mathcal{F}$ to $\mathcal{F}' = p^* \mathcal{F}_l$ can be deformed to an isotopy between $\xi_{vert}$ and $\xi'$ by Propositions \ref{fam_def} and \ref{lin_def}, which yields a contradiction if $6l-1$ is coprime to $6k-7$.
\end{proof}
\begin{rem}
Since the arguments above only used the arithmetic properties of the Seifert invariants, they could also be applied to other small Seifert fibered manifolds. We also observe that the uniqueness results of Vogel \cite{Vog} give alternative proofs of Theorems \ref{Rep_disconnected} and \ref{Seifert_dis}. As his results only assume $C^0$-closeness they yield that the conclusions about path components also hold with respect to the weaker $C^0$-topology. All results also remain true for foliations that are only of class $C^2$ as this suffices for Theorem \ref{spec_def} and its various consequences.
\end{rem}
Since the foliations in Theorem \ref{Seifert_dis} are by construction horizontal, their tangent distributions are homotopic as oriented plane fields. Thus by \cite{Lar}, they are homotopic as foliations (cf.\ also \cite{Eyn}). The construction of such a homotopy of integrable plane fields introduces many Reeb components, so it is natural to ask whether this is necessary. The first example of this kind (\cite{Vog}, Example 9.5) was given by considering the oriented horizontal foliations $\mathcal{F}_{hor}$ and $\overline{\mathcal{F}}_{hor}$ on the Brieskorn sphere $ \Sigma(2,3,11)$ taken with the positive orientation. Using classification results of contact structures on $\Sigma(2,3,11)$, the horizontal foliation $\mathcal{F}_{hor}$ cannot be homotopic to $\overline{\mathcal{F}}_{hor}$ through oriented taut foliations, since the horizontal contact structures obtained as perturbations $\xi,\overline{\xi}$ are not isotopic. However, since all Brieskorn spheres admit orientation preserving diffeomorphisms that reverse the orientation of the Seifert fibration (Remark \ref{rem_or_rev}), these contact structures are in fact contactomorphic so that one cannot deduce disconnectedness for either diffeomorphism classes of taut foliations or for the space of unoriented taut foliations in this example.

On the other hand since the twisting number of a contact structure is a contactomorphism invariant and does not depend on the orientation of the plane field, the proof of Theorem \ref{Seifert_dis} shows that the space of taut foliations is also disconnected even if one only considers \emph{diffeomorphism classes} of unoriented foliations yielding the following
\begin{thm}\label{homotopy_Reeb}
There exist  an infinite family of manifolds $M_n$ each admitting a pair of taut foliations $\mathcal{F}_0,\mathcal{F}_1$ that are homotopic as oriented foliations but not as taut foliations. Furthermore, the same result holds true for unoriented foliations or if one considers diffeomorphism classes of foliations.
\end{thm}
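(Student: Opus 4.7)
The plan is to take $\mathcal{F}_0 = \mathcal{F}$ and $\mathcal{F}_1 = p^*\mathcal{F}_l$ to be the two horizontal taut foliations constructed on $M_k = -\Sigma(2,3,6k-1)$ in the proof of Theorem \ref{Seifert_dis}, for each large $k$ and any admissible $l$ with $6l-1$ coprime to $6k-7$ (which exists for all $l > \tfrac{1}{6}(\sqrt{6k-7}+1)$ with at most one exception). Since both foliations are horizontal, their oriented tangent distributions are homotopic as oriented plane fields, and by \cite{Lar} this homotopy can be realised through integrable plane fields. Hence $\mathcal{F}_0$ and $\mathcal{F}_1$ are homotopic as foliations, and letting $k$ range over sufficiently large integers produces the asserted infinite family.

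For the non-homotopy assertion, the argument of Theorem \ref{Seifert_dis} essentially applies verbatim. Since $M_k$ is non-Haken, any taut foliation is automatically without closed leaves, and $M_k$ does not fibre over $S^1$. Thus any hypothetical smooth path $\mathcal{F}_t$ of taut foliations joining $\mathcal{F}_0$ to $\mathcal{F}_1$ may be linearly perturbed, via Proposition \ref{fam_def}, to a smooth family of contact structures $\xi_t$; by Proposition \ref{lin_def} the endpoints $\xi_0$ and $\xi_1$ are then isotopic. This is a contradiction, since $t(\xi_0) = -(6k-7)$ while the twisting number of $\xi_1$ is bounded in absolute value strictly below $6k-7$ by the sub-multiplicativity argument of Theorem \ref{Seifert_dis}.

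To upgrade to diffeomorphism classes of unoriented foliations, I would rely on two properties of the twisting number. First, $t(\xi)$ is defined via the Thurston--Bennequin number of a Legendrian representative of a regular fibre, which is insensitive to the coorientation of $\xi$; so the same contact invariant controls $\mathcal{F}_1$ and its coorientation reverse $\overline{\mathcal{F}}_1$. Second, because $M_k$ is a hyperbolic small Seifert fibered space, its Seifert fibration is unique up to isotopy, and any self-diffeomorphism $\phi$ of $M_k$ therefore preserves the isotopy class of a regular fibre together with the canonical framing from the base (up to sign, which does not affect $\mathrm{tb}$). Thus the perturbation of $\phi_{*}\mathcal{F}_1$ or $\phi_{*}\overline{\mathcal{F}}_1$ still has twisting number $-(6l-1)$, and the argument of the previous paragraph rules out a path of taut foliations between $\mathcal{F}_0$ and any such modification of $\mathcal{F}_1$.

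The main technical point to verify is that Proposition \ref{fam_def} applies uniformly to a hypothetical family of taut foliations $\mathcal{F}_t$: one needs each $\mathcal{F}_t$ to satisfy the hypotheses of Theorem \ref{spec_def}, which follows from non-Hakenness of $M_k$ (so that $\mathcal{F}_t$ has no closed leaves) and from the fact that $M_k$ does not fibre over $S^1$ (so the ``foliation without holonomy'' degeneracy does not arise), combined with the convexity of the linear-deformation condition $\langle \alpha, \beta \rangle > 0$ that allows a partition-of-unity construction of a coherent family of perturbing $1$-forms. Once this is in place, the proof is a direct assembly of Theorem \ref{Seifert_dis} with the two invariance properties of $t(\xi)$ listed above.
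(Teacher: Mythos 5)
Your proposal is correct and follows essentially the same route as the paper: the examples are exactly the horizontal foliations on $-\Sigma(2,3,6k-1)$ from Theorem \ref{Seifert_dis}, homotopic as foliations via \cite{Lar} because their tangent distributions are homotopic plane fields, and not homotopic as taut foliations by the linear-perturbation and twisting-number argument; the upgrade to diffeomorphism classes of unoriented foliations rests, as in the paper, on the twisting number being a contactomorphism invariant independent of the coorientation of the plane field. Your extra justification via uniqueness of the Seifert fibration is a welcome elaboration of the paper's one-line claim (only a cosmetic slip: the twisting number of $\xi_1=p^*\xi_l$ is only known to be strictly larger than $-(6k-7)$, not exactly $-(6l-1)$, but that is all the argument needs).
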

\noindent Since the manifolds $\Sigma(2,3,6k-1)$ are non-Haken the notions of tautness and Reeblessness coincide and it follows that any homotopy between $\mathcal{F}_0$ and $\mathcal{F}_1$ in Theorem \ref{homotopy_Reeb} must have Reeb components. Moreover, these examples show that the space of taut foliations in a given homotopy class of plane fields can have more than one equivalence class up to deformation and diffeomorphism, but nonetheless we can only distinguish finitely many such equivalence classes.

It seems much more difficult to find examples where the number of equivalence classes is infinite. If instead one only considers deformation classes of taut foliations themselves without quotienting out by the action of the diffeomorphism group of the manifold as well, then it is possible to give examples where the number of components is infinite. This uses not only the special structure of foliations on the unit cotangent bundle of a hyperbolic surface but also the structure of a foliation near torus leaves. 

\subsection*{Torus leaves and Kopell's Lemma:}
The behaviour of a foliation near a torus leaf is well understood and is nicely described in Eynard-Bontemps's thesis \cite{Eyn}. The fundamental result that puts restrictions on the behaviour of a foliation of class at least $C^2$ near a torus leaf is the following lemma of Kopell.
\begin{lem}[Kopell]
Let $f$ and $g$ be commuting diffeomorphisms mapping $[0,1)$ into itself (not necessarily surjectively) that fix the origin and are of class $C^2$ and $C^1$ respectively. Assume that $f$ is contracting. Then either $g$ has no fixed point in $(0,1)$ or $g = Id$.
\end{lem}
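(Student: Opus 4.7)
The plan is to argue by contradiction: assume $g$ has a fixed point $c\in(0,1)$ while $g\not\equiv Id$. Since $f$ and $g$ commute, every point of the orbit $c_n:=f^n(c)$ is fixed by $g$, because $g(c_n)=g(f^n(c))=f^n(g(c))=c_n$, and these points accumulate at $0$ as $f$ is contracting. Set $J_n := [c_{n+1},c_n]$. Differentiating $g\circ f = f\circ g$ at $c_n$ gives $g'(c_{n+1})=g'(c_n)$, so $g'(c_n)=g'(c)$ for every $n$. Applying Rolle's theorem to $g-Id$ on each $J_n$ yields $\xi_n\in J_n$ with $g'(\xi_n)=1$, and since $\xi_n\to 0$ we conclude $g'(0)=1$; continuity then forces $g'(c)=\lim g'(c_n)=1$ as well.

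For the contradiction, suppose $g\not\equiv Id$ on $J_0$, so there exists $x_0\in J_0$ with $\delta:=g(x_0)-x_0>0$ (replacing $g$ by $g^{-1}$ if necessary). Let $x_n:=f^n(x_0)\in J_n$; commutativity gives $g(x_n)-x_n = f^n(g(x_0))-f^n(x_0)$. The crucial ingredient is the standard bounded-distortion estimate for iterates of a $C^2$ map: from
\[\log\frac{(f^n)'(x)}{(f^n)'(y)} = \sum_{k=0}^{n-1}\bigl(\log f'(f^k(x))-\log f'(f^k(y))\bigr),\]
together with the fact that the intervals $J_k\subset(0,c]$ are essentially disjoint with $\sum_k |J_k|\leq c$, one obtains a uniform bound (independent of $n$) on the distortion of $f^n|_{J_0}$. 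Consequently both $g(x_n)-x_n$ and $x_n-c_{n+1}$ are comparable, up to a fixed multiplicative constant, to $\delta\cdot|J_n|/|J_0|$ and $(x_0-c_1)\cdot|J_n|/|J_0|$ respectively. The mean value theorem applied to $g-Id$ on $[c_{n+1},x_n]$ then produces $\tau_n\in J_n$ with $|g'(\tau_n)-1|$ bounded below by a positive constant independent of $n$; but $\tau_n\to 0$ and $g'(0)=1$ give $g'(\tau_n)-1\to 0$, a contradiction. Hence $g\equiv Id$ on $J_0$, and conjugating by $f^n$ propagates this to each $J_n$, so that $g\equiv Id$ on $[0,c]$ by continuity at $0$.

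The main obstacle is precisely the bounded-distortion estimate, and this is where the hypotheses are sharp: controlling the telescoping sum requires $\log f'$ to be Lipschitz, which is exactly the $C^2$-regularity of $f$. Indeed, the conclusion famously fails for $f$ merely $C^1$ by Denjoy-type counterexamples. The weaker $C^1$ hypothesis on $g$ enters only through the derivative identity $g'(c_n)=g'(c)$ and the final mean value step, and no further regularity of $g$ is needed.
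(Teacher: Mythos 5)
The paper does not prove this lemma; it quotes Kopell's result as a known classical fact, so there is no in-paper argument to compare against. Your proof is the standard one (essentially Kopell's original argument, as presented e.g.\ in Navas's book): the fixed points $c_n=f^n(c)$ accumulate at $0$, Rolle on $g-\mathrm{Id}$ over the fundamental domains $J_n$ forces $g'(0)=1$, and the $C^2$ bounded-distortion estimate for $f^n|_{J_0}$ transports a hypothetical displacement $\delta=g(x_0)-x_0>0$ down the orbit at a rate comparable to $|J_n|$, producing points $\tau_n\to 0$ with $g'(\tau_n)-1$ bounded away from $0$ --- a contradiction. All the individual steps check out: the identity $g'(c_{n+1})=g'(c_n)$, the fact that $\theta_n\in[x_0,g(x_0)]\subset J_0$ (since $g$ preserves $J_0$, fixing its endpoints), and the telescoping distortion sum controlled by $\sum_k|J_k|\leq c$ via the Lipschitz property of $\log f'$ on $[0,c]$, which is exactly where $C^2$ enters.

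One small but genuine incompleteness: you stop at $g\equiv\mathrm{Id}$ on $[0,c]$, whereas the lemma asserts $g=\mathrm{Id}$ on all of $[0,1)$. The missing line is easy: for any $y\in[c,1)$ the contraction gives $f^n(y)\in(0,c)$ for $n$ large, so $f^n(g(y))=g(f^n(y))=f^n(y)$, and injectivity of $f^n$ yields $g(y)=y$. With that addendum (and, if you want full rigour, a sentence verifying that $\log f'$ is Lipschitz on $[0,c]$ because $f'$ is continuous, positive, and $C^1$ there), the argument is complete.
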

\noindent One also knows that torus leaves occur in a finite number of stacks in the following sense (cf.\ Eynard-Bontemps \cite{Eyn}, Thurston \cite{Thu}).
\begin{lem}\label{torus_stacks}
Let $\mathcal{F}$ be a $2$-dimensional foliation on a closed $3$-manifold $M$. Then either $\mathcal{F}$ is a foliation by tori on a $T^2$-bundle over $S^1$ or there is a finite collection $N_i = T_i \times[0,c_i]$ of foliated $I$-bundles over $T^2$ in $M$ with $c_i \geq 0$ so that $T_i \times\{0\}$ and $T_i \times\{c_i\}$ are torus leaves and such that $M \setminus \cup \thinspace N_i$ contains no further torus leaves.
\end{lem}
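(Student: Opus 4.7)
The plan is to combine a local analysis at each torus leaf via the preceding Kopell's Lemma with a global compactness argument, treating the totally foliated case separately. I work in the $C^2$ setting, which is guaranteed by the paper's smoothness convention.

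First I would analyse the one-sided holonomy at a fixed torus leaf $L$. Since $\pi_1(L) \cong \mathbb{Z}^2$ is abelian, the holonomy representation $\rho \colon \pi_1(L) \to \text{Diff}_+([0,\epsilon),0)$ has commuting image. Kopell's Lemma then produces a clean dichotomy: either every $\rho(\gamma)$ pointwise fixes some common half-interval $[0,\delta)$, in which case Reeb local stability yields a one-sided product neighbourhood $L \times [0,\delta)$ foliated entirely by torus leaves; or some $\rho(\gamma)$ has $0$ as an isolated fixed point (hence is a contraction on a one-sided neighbourhood), in which case Kopell forces every other commuting element of $\text{Im}(\rho)$ to have no fixed point in $(0,\delta)$ for some $\delta > 0$, so no torus leaves accumulate on $L$ from that side. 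Applying this to both sides gives a unique maximal two-sided \emph{stack} $N_L \cong T^2 \times [a,b]$ of torus leaves containing $L$; the outer boundary leaves, when distinct from $L$, carry non-trivial holonomy on the exterior side. Distinct maximal stacks are disjoint by construction.

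Next I would establish that the union $\mathcal{T} \subset M$ of torus leaves is closed. For this I would combine Haefliger's classical theorem, which asserts that the set of compact leaves of a codimension-one $C^2$ foliation is closed, with the standard observation that every compact leaf sufficiently close to a given compact leaf is a finite cover of it via the holonomy pseudogroup. Since the only closed surface finitely covered by a torus is a torus, an accumulation point of torus leaves lies on a torus leaf. This closedness is the main subtle point of the proof, since the Kopell dichotomy alone controls only behaviour leaf-by-leaf and does not by itself rule out torus leaves accumulating onto a higher-genus or non-compact leaf.

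Finally I would deduce finiteness and the fibration alternative. The local dichotomy shows that every point of $\mathcal{T}$ has a neighbourhood in $M$ meeting at most one stack: interior points lie in a product neighbourhood inside their stack, while boundary points have no exterior torus leaves nearby. Hence the family of maximal stacks is a discrete collection of closed subsets of the compact manifold $M$, and so is finite. If these finitely many stacks exhaust $M$, then every leaf of $\mathcal{F}$ is a torus, and the Reeb--Epstein theorem on codimension-one foliations with all leaves compact implies that $\mathcal{F}$ is a locally trivial $T^2$-fibration over a closed $1$-manifold, which is $S^1$ by the orientability convention. Otherwise, the finite collection $\{N_i\}$ is the desired decomposition, and $M \setminus \bigcup N_i$ contains no torus leaves by the maximality of each stack.
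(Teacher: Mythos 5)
Your overall architecture---closedness of the union $\mathcal{T}$ of torus leaves via Haefliger plus a local analysis at each torus leaf, with the all-compact case handled separately---is sound, and you correctly isolate the closedness of $\mathcal{T}$ as the crux. The genuine gap is that the ``clean dichotomy'' you extract from Kopell's Lemma is not a dichotomy, and your finiteness argument collapses in the missing case. For an abelian group of germs at $0$ of orientation-preserving diffeomorphisms of $[0,\epsilon)$ there is a third possibility: the group is non-trivial, yet no element has $0$ as an isolated fixed point, because the fixed-point set of every element accumulates at $0$ without containing a half-neighbourhood of $0$. This occurs smoothly: take the suspension over $T^2$ of the $\mathbb{Z}^2$-action generated by $g=\mathrm{id}$ and $f(x)=x+e^{-1/x}\sin(\pi/x)$ (a diffeomorphism near $0$). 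The boundary torus is then accumulated from one side by infinitely many torus leaves, namely those over $\mathrm{Fix}(f)$, separated by bands of non-compact leaves; Kopell's Lemma gives nothing here since no element is contracting. In this situation your maximal stack \emph{of torus leaves} through the boundary leaf is that single leaf, each accumulating torus is its own maximal stack, and the collection of maximal stacks is neither discrete nor finite: the assertion that ``boundary points have no exterior torus leaves nearby'' is false.

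The lemma survives because the $N_i$ are not required to be foliated by tori: they need only be products whose two boundary components are torus leaves and which jointly absorb all torus leaves. So the repair is to abandon the leaf-by-leaf dichotomy and argue directly from compactness of $\mathcal{T}$: cover $\mathcal{T}$ by finitely many product neighbourhoods $L_j\times(-\epsilon_j,\epsilon_j)$ of torus leaves in which every torus leaf is a section over $L_j$, take in each the compact region between the lowest and highest such sections (a product cobounded by torus leaves, possibly containing non-compact leaves in its interior), and merge overlapping regions finitely many times to achieve disjointness. A secondary slip: a closed surface finitely covered by a torus can also be a Klein bottle, so your identification of limit leaves as tori needs the paper's coorientability convention rather than the covering statement alone.
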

Now for any stack $N_i$ as in Lemma \ref{torus_stacks}, one has an induced holonomy homomorphism $\rho_{\mathcal{F}}$ defined near each end of $N_i$. Since there are no torus leaves outside of the stacks $N_i$, it follows with the help of Kopell's Lemma that the holonomy $f$ around some loop near say the upper end of $N_i$ must be contracting on the outside of $N_i$. Then by a result of Szekeres \cite{Sze} one knows that $f$ is the time $1$ map of a flow generated by a $C^1$-vector field $u(z)\partial_z$ that is smooth away from the fixed point $c_i$ (cf.\ \cite{Eyn} Th\'eor\`eme 1.1). Once again by Kopell's Lemma it follows that the entire image of $\rho_{\mathcal{F}}$ is generated by elements contained in the flow generated by $u(z)\partial_z$. One can then conjugate the foliation to one whose characteristic foliation is linear on tori near the ends of a stack of torus leaves (cf.\ \cite{Eyn}, Lemme 5.21).
\begin{lem}\label{szek_lem}
Let $\mathcal{F}$ be a smooth foliation on $T^2 \times [0,\epsilon]$ having only $L = T^2 \times \{0\}$ as a closed leaf and let $(x,y,z)$ denote standard coordinates on $T^2 \times [0,\epsilon]$. Then there is a $C^1$-diffeomorphism $\phi$ mapping $T^2 \times [0,\epsilon]$ into itself that fixes $L$ and is smooth on $T^2 \times (0,\epsilon]$ such that the image of $\mathcal{F}$ under $\phi$ is defined by the kernel of the $1$-form
$$dz + u(z)(a \thinspace dx + b \thinspace dy)$$
for some function $u(z) \geq 0$ that is positive away from $L$.
\end{lem}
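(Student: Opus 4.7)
The plan is to extract a well-behaved $1$-parameter flow from the holonomy of $\mathcal F$ along $L$, to build a model foliation from it, and then to conjugate $\mathcal F$ onto the model by a fiber-preserving diffeomorphism. After possibly shrinking $\epsilon$, the tangent distribution $T\mathcal F$ is transverse to $\partial_z$ on all of $T^2\times[0,\epsilon]$ (it is transverse along the leaf $L$ by hypothesis, and hence in a neighborhood by compactness), so the holonomy of $\mathcal F$ along $L$ yields a smooth representation $\rho : \pi_1(T^2) \to \mathrm{Diff}_+([0,\epsilon])$ based at $\tau = \{(0,0)\}\times[0,\epsilon]$. The assumption that $L$ is the only closed leaf means in particular that no orbit of $\rho$ in $(0,\epsilon]$ is finite; as in the discussion preceding the lemma, Kopell's Lemma then produces some non-identity $f \in \mathrm{image}(\rho)$ that is contracting toward $0$, and Szekeres's theorem embeds $f$ as $\phi^u_1$ in a $C^1$-flow $\{\phi^u_t\}_{t\in\mathbb R}$ generated by a $C^1$-vector field $u(z)\partial_z$ that is smooth on $(0,\epsilon]$ with $u(0)=0$ and $u>0$ elsewhere. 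A second application of Kopell's Lemma forces the entire abelian image of $\rho$ into this flow, so for the standard generators $\gamma_1,\gamma_2$ of $\pi_1(T^2)$ one has $\rho(\gamma_1) = \phi^u_a$ and $\rho(\gamma_2) = \phi^u_b$; these reals $a,b$ are the constants sought by the lemma.

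With $u$, $a$ and $b$ in hand I would next introduce the model foliation $\mathcal F_0 = \ker\omega_0$, where
\[
\omega_0 = dz - u(z)\bigl(a\,dx + b\,dy\bigr).
\]
A short computation using $(a\,dx+b\,dy)\wedge(a\,dx+b\,dy)=0$ shows $\omega_0\wedge d\omega_0=0$, so $\omega_0$ is integrable; the leaves of $\mathcal F_0$ in the universal cover $\mathbb R^2\times[0,\epsilon]$ are the graphs $z=\phi^u_{ax+by}(z_0)$, and a direct check recovers $\rho$ as its holonomy representation.

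The rectifying diffeomorphism $\phi$ would then be constructed by lifting both foliations to $\mathbb R^2\times[0,\epsilon]$. Both lifted foliations are transverse to $\partial_z$, so each of their leaves is the graph of a function of $(x,y)$ determined by its intercept $z_0$ at the origin. I would set
\[
\tilde\phi(x,y,z) = \bigl(x,\,y,\,\phi^u_{ax+by}(z_0(x,y,z))\bigr),
\]
where $z_0(x,y,z)$ is the intercept at $(0,0)$ of the $\mathcal F$-leaf through $(x,y,z)$; by construction this maps each $\mathcal F$-leaf onto the corresponding $\mathcal F_0$-leaf sharing the same intercept. Because the holonomies of $\mathcal F$ and $\mathcal F_0$ along the generators $\gamma_1,\gamma_2$ agree literally (both equal to $\rho$), the intercept satisfies $z_0(x+1,y,z) = \phi^u_{-a}(z_0(x,y,z))$ and the analogous identity in $y$, so $\tilde\phi$ is $\mathbb Z^2$-equivariant and descends to the required diffeomorphism $\phi$ on $T^2\times[0,\epsilon]$.

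The hard part will be verifying regularity at the torus leaf $L$. Away from $L$ everything is smooth, since $\mathcal F$ is $C^\infty$, the flow $\phi^u_t$ is smooth on $(0,\epsilon]$, and $z_0(x,y,z)$ is a smooth implicit function of its arguments, so $\phi$ is automatically smooth on $T^2\times(0,\epsilon]$. Showing that $\phi$ extends to a $C^1$-diffeomorphism across $L$ which fixes $L$ pointwise is precisely the regularity content of Szekeres's embedding theorem: the Szekeres flow provides a $C^1$ conjugacy between the germ of the smooth holonomy and its linear model in the flow parameter, and in general this conjugacy cannot be improved beyond $C^1$, which is why the lemma is stated at the $C^1$ level.
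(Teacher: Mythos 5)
Your argument is correct and follows exactly the route the paper itself sketches in the paragraph preceding the lemma (Kopell's Lemma to produce a contracting holonomy element, Szekeres's theorem to embed it in a $C^1$-flow, Kopell again to trap the whole abelian holonomy image in that flow, then conjugation onto the explicit model); the paper does not write out the details but defers them to Eynard-Bontemps (\cite{Eyn}, Lemme 5.21), and your construction of the rectifying map $\tilde\phi$ via leaf intercepts is the standard way that reference completes the argument. The only point worth noting is that the holonomy maps and the intercept function $z_0$ are a priori only defined after shrinking the collar, so the statement should be read at the level of germs along $L$ — which is how the lemma is actually used.
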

\noindent For a stack of torus leaves we let $\lambda_{\pm} = (a_{\pm} ,b_{\pm} )$ be the asymptotic slope near the positive resp.\ negative end of a stack normalised so that $||\lambda_{\pm}|| = 1$. Note that near the negative end we take coordinates of the form $T^2 \times [-\epsilon,0]$. Now if the slopes $\lambda_-$ and $\lambda_+$ do not coincide then the stack of leaves is \textbf{stable} in the sense that any foliation in a $C^0$-neighbourhood of $\mathcal{F}$ has a closed torus leaf in a neighbourhood of $N_i$. If a stack of tori has arbitrarily small perturbations that are without closed leaves then the stack is called \textbf{unstable}. 

The final ingredient we shall need is Thurston's straightening procedure for foliations on $S^1$-bundles (see also \cite{Cal} p.\ 178 ff.).

\begin{thm}[Thurston \cite{Thu}]\label{Thurston_straight}
Let $\mathcal{F}$ be a foliation on an $S^1$-bundle over a surface $\Sigma_g$ of genus $g \geq 2$ without closed leaves. Then $\mathcal{F}$ is isotopic to a horizontal foliation. Furthermore, if $\mathcal{F}$ is already horizontal on a vertical torus $T$, then this isotopy can be made relative to $T$.
\end{thm}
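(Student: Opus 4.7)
The plan is to build a fibre-preserving isotopy of $M$ that straightens $\mathcal{F}$ to a horizontal foliation, working in the universal cover. First I would invoke Novikov's theorem: the hypothesis that $\mathcal{F}$ has no closed leaves implies in particular that there are no Reeb components, so every leaf is $\pi_1$-injective and all transverse loops are essential in $\pi_1(M)$. Since $g\geq 2$, the universal cover $\widetilde{M}$ is diffeomorphic to $\widetilde{\Sigma}_g \times \mathbb{R}$, with the centre of $\pi_1(M)$ generated by the fibre class acting by translation in the $\mathbb{R}$-factor and $\pi_1(\Sigma_g)$ acting in the first factor; fibres of $M$ correspond to $\mathbb{R}$-orbits of this central action.

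The heart of the proof is to show that every leaf $\widetilde{L}$ of $\widetilde{\mathcal{F}} = p^*\mathcal{F}$ is a graph over $\widetilde{\Sigma}_g$, meaning it meets every vertical $\mathbb{R}$-orbit transversely in exactly one point. Given this, the map $(x,z) \mapsto (x,\varphi(x,z))$, where $\varphi(x,\cdot)$ records the real-valued level of the leaf through $(x,z)$ in the leaf space $\mathbb{R}$, gives an equivariant fibre-preserving diffeomorphism taking $\widetilde{\mathcal{F}}$ to the standard horizontal foliation, which descends to the desired isotopy on $M$. To establish the graph property, I would first argue that each $\widetilde{L}$ is a properly embedded plane: non-compactness of leaves follows from the absence of closed leaves, and $\pi_1$-injectivity plus centrality of the fibre class in $\pi_1(M)$ forces the lifts to be simply connected. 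Second, if a vertical orbit met $\widetilde{L}$ twice, one produces a loop in $M$ whose $\mathcal{F}$-holonomy along a transversal disc has a fixed point; a Kopell-type analysis, parallel to the one invoked earlier for torus leaves, combined with the absence of closed leaves rules this out and yields a contradiction. Surjectivity of each leaf onto $\widetilde{\Sigma}_g$ then follows from properness together with the centrality of the $\mathbb{Z}$-action, since the integer translates of any one intersection point exhaust the $\mathbb{R}$-orbit at both ends.

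The main obstacle is precisely the graph property; everything else is a soft equivariant construction. For the refinement relative to a vertical torus $T$ on which $\mathcal{F}$ is already horizontal, I would choose the parametrisation $\varphi$ on $\widetilde{M}$ so that it agrees with the given horizontal parametrisation on the preimage of $T$, or equivalently cut $M$ open along $T$ and straighten on each piece while preserving the product structure on the boundary, then glue back by the identity along $T$. The equivariance throughout is automatic because both the central $\mathbb{Z}$-action and the $\pi_1(\Sigma_g)$-action on $\widetilde{M}$ preserve the fibration and the lifted foliation.
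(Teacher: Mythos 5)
The paper itself gives no proof of this statement (it is quoted from Thurston's thesis, with a pointer to \cite{Cal}), so your proposal has to stand on its own, and it has a genuine gap at its central step. You claim that every leaf $\widetilde{L}$ of the lifted foliation meets every vertical $\mathbb{R}$-orbit \emph{transversely} in exactly one point. That assertion is equivalent to saying that $\mathcal{F}$ is already transverse to the fibers, i.e.\ already horizontal up to a fiber-preserving adjustment --- which is precisely the conclusion of the theorem, not something that follows from the hypotheses. A foliation without closed leaves need not be transverse to the fibration: push a horizontal foliation by an ambient isotopy that tilts some leaves until they become vertical somewhere; the result still has no closed leaves, its lifted leaves are still properly embedded planes, but they are not graphs over $\widetilde{\Sigma}_g$. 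So the ``graph property'' is simply false before the straightening isotopy has been performed, and any correct proof must contain a mechanism for \emph{removing tangencies} of $\mathcal{F}$ with the fibers. Your argument for the graph property also fails on its own terms: a vertical arc joining two points of the same lifted leaf need not be transverse to $\widetilde{\mathcal{F}}$, so it does not produce a closed transversal or a holonomy map with a fixed point; and since the lifted leaves are planes, every loop on a leaf is null-homotopic in that leaf, so a Kopell-type analysis of leafwise holonomy yields no contradiction.

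The missing content is exactly what Thurston supplies: one first isotopes $\mathcal{F}$ into general position so that each fiber is tangent to $\mathcal{F}$ at finitely many Morse-type points, and then cancels these tangencies using an index/Euler-characteristic count on the leaves together with Novikov's theorem and the Euler-class (Milnor--Wood type) constraints coming from $g \geq 2$. Note that the hypothesis $g \geq 2$ is never used in your argument, which is a warning sign that the real difficulty has been bypassed. Once transversality to all fibers is achieved, the soft part of your proposal --- identification of the leaf space of $\widetilde{\mathcal{F}}$ with $\mathbb{R}$, the equivariant graph parametrisation, and the relative version along a vertical torus $T$ obtained by cutting and regluing --- is essentially correct and standard. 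I would encourage you to isolate the tangency-removal step as the actual theorem to be proved and to consult Thurston's thesis or Calegari's book for that argument.
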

\begin{rem}\label{add_hyp}
We note that if a vertical torus $T$ is merely transverse to the foliation then we can assume after a suitable isotopy that the foliation is in fact horizontal on $T$. For since any foliation without closed leaves is isotopic to a horizontal one, it follows that $[\widetilde{L}]\cdot [S^1] =1$ for any leaf $\widetilde{L}$ of the foliation given by pulling back under covering induced by the universal cover of the base, when $\mathcal{F}$ is suitably oriented. This means that after a suitable isotopy all closed leaves of the induced non-singular foliation $\mathcal{F}|_{T}$ intersect each fiber positively (so that this intersection is non-empty). In particular, $\mathcal{F}|_{T}$ has no $2$-dimensional Reeb components and no closed orbit can be isotopic to a fiber. Consequently one can apply an isotopy of the foliation with support near $T$ so that $\mathcal{F}$ is horizontal on $T$.
\end{rem}
\noindent We are now ready to prove that the space of taut foliations on $ST^*\Sigma_g$ has infinitely many components. Before giving the proof we clarify our orientation conventions concerning the Euler class of the (unit) cotangent bundle of a surface. Any real oriented vector bundle of rank-$2$ determines a unique complex line bundle. For the tangent bundle of an oriented surface this is equivalent to the choice of an almost complex structure that is compatible with the orientation and the first Chern class of this complex line bundle $T_{\mathbb{C}} \Sigma_g$ agrees (more or less by definition) with the Euler class of the tangent bundle as an oriented rank-$2$ bundle. Thus it is natural to identify $T^*\Sigma_g$ with the dual of the complex bundle so that the Euler number satisfies
$$e = \langle e(ST^*\Sigma_g),[\Sigma_g]\rangle=\langle c_1(T^*_{\mathbb{C}} \Sigma_g),[\Sigma_g]\rangle=\langle-c_1(T_{\mathbb{C}}\Sigma_g),[\Sigma_g]\rangle =  2g-2.$$
\begin{thm}\label{taut_inf}
The space of taut foliations on $ST^*\Sigma_g$ has at least $\mathbb{Z}^{2g}$ components if $g\geq 2$, which are all pairwise homotopic as foliations.
\end{thm}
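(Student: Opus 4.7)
The plan is to produce, for every class $\alpha$ in $H_1(\Sigma_g,\mathbb{Z}) \cong \mathbb{Z}^{2g}$, a taut foliation $\mathcal{F}_\alpha$ on $ST^*\Sigma_g$ and to distinguish the $\mathcal{F}_\alpha$ via the total homology class carried by their torus leaves, which I will argue is an invariant of the path component.

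For the construction, realise each $\alpha \in H_1(\Sigma_g,\mathbb{Z})$ by a disjoint union of oriented embedded simple closed curves $c_\alpha \subset \Sigma_g$ (with $c_0 = \emptyset$). The preimage $T_\alpha = \pi^{-1}(c_\alpha)$ is a disjoint union of incompressible vertical tori in $ST^*\Sigma_g$. Starting from the weak unstable foliation $\mathcal{F}_{hor}$ of the geodesic flow, which is horizontal, Anosov and hence taut, I would first isotope $\mathcal{F}_{hor}$ so that its characteristic foliation on each component of $T_\alpha$ is linear, then spin along $T_\alpha$ via Construction~\ref{Spiral} to obtain a Reebless foliation $\mathcal{F}_\alpha$ whose only torus leaves are the components of $T_\alpha$. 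Since the Euler class of $ST^*\Sigma_g \to \Sigma_g$ is $2-2g \neq 0$, the Gysin sequence identifies $H_2(ST^*\Sigma_g,\mathbb{Z})$ with $H_1(\Sigma_g,\mathbb{Z})$, so for $\alpha \neq 0$ no non-empty oriented sub-collection of the components of $T_\alpha$ is null-homologous, and Goodman's theorem (Theorem~\ref{Goodman}) certifies that $\mathcal{F}_\alpha$ is taut. All the $\mathcal{F}_\alpha$ have tangent plane fields homotopic to $T\mathcal{F}_{hor}$ because the spinning formula provides an explicit path through non-vanishing $1$-forms, so in particular they are pairwise homotopic as foliations.

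Next I would associate to every taut foliation $\mathcal{F}$ on $ST^*\Sigma_g$ the class
\[
\Phi(\mathcal{F}) \;=\; \sum_i\, [T_i] \;\in\; H_2(ST^*\Sigma_g,\mathbb{Z}) \;\cong\; \mathbb{Z}^{2g},
\]
where the $T_i$ are the boundary leaves of the finitely many stacks of torus leaves given by Lemma~\ref{torus_stacks}, oriented by the coorientation of $\mathcal{F}$. By construction $\Phi(\mathcal{F}_\alpha)$ corresponds to $\alpha$ under the Gysin identification, so the $\mathcal{F}_\alpha$ are pairwise distinguished by $\Phi$, and it remains to show that $\Phi$ is locally constant along a smooth path of taut foliations $\mathcal{F}_t$.

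This last step is the main obstacle, and where most of the work lies. By Lemma~\ref{szek_lem} and Kopell's Lemma each stack has well-defined asymptotic slopes $\lambda_-,\lambda_+$: stable stacks ($\lambda_-\neq\lambda_+$) persist under perturbation with constant homology class, while an unstable stack can shrink to a leaf or vanish, but in that case $\lambda_-=\lambda_+$ forces the oriented boundary leaves of the pair produced (or consumed) by the bifurcation to cancel in $\Phi$. The delicate point is to rule out that a regular portion of $\mathcal{F}_t$ develops a genuinely new, homologically essential torus leaf without a canceling partner: this is obstructed by tautness together with Goodman's theorem, since any such leaf would survive as a stable stack whose class would appear in $\Phi$ only if some other stack's class disappeared, contradicting continuity. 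On the complement of the stacks I would invoke Thurston's straightening (Theorem~\ref{Thurston_straight} with Remark~\ref{add_hyp}) in each piece, reducing the continuity of $\Phi$ to continuity of the holonomy representations of horizontal foliations on $S^1$-subbundles, which is immediate. The bifurcation analysis of stacks, combining Kopell and Szekeres with the regularity statement of Lemma~\ref{szek_lem}, is the technical heart of the argument.
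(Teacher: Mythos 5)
There is a genuine gap --- in fact two. First, your construction of $\mathcal{F}_\alpha$ does not get off the ground: you propose to isotope the Anosov foliation $\mathcal{F}_{hor}$ so that its characteristic foliation on each vertical torus $T_\alpha$ is linear and then spin via Construction~\ref{Spiral}. But the holonomy of a horizontal foliation on $ST^*\Sigma_g$ around an essential curve in the base is never conjugate to a rotation (this is exactly the rigidity result of Matsumoto, (\cite{Mat}, Theorem 2.2), which the paper invokes for the opposite purpose); the trace of $\mathcal{F}_{hor}$ on a vertical torus over a closed geodesic is a suspension of a hyperbolic diffeomorphism, so it cannot be made linear, and spinning along a non-linear characteristic foliation only produces a $C^0$ foliation. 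Taut foliations with prescribed vertical torus leaves on $ST^*\Sigma_g$ do exist, but they must be built from scratch on the complement of $T_\alpha$, not as smooth deformations of $\mathcal{F}_{hor}$ --- indeed Corollary~\ref{Reebless_comp} says they cannot lie in the same Reebless path component as $\mathcal{F}_{hor}$, which is essentially what you are trying to prove.

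Second, and more fatally, the invariant $\Phi(\mathcal{F})=\sum_i[T_i]$ is simply not locally constant on the space of taut foliations, so no amount of bifurcation analysis will rescue the last step. Spinning along a homologically essential vertical torus on which the characteristic foliation is linear is an explicit \emph{smooth} one-parameter deformation through taut foliations (tautness at both ends by Theorem~\ref{Goodman}) that changes $\Phi$ by the class of that torus; the paper points this out explicitly for the product foliation on $\Sigma_g\times S^1$, and the same mechanism applies to non-horizontal taut foliations on $ST^*\Sigma_g$ (e.g.\ a foliation with one vertical torus leaf whose holonomy around a disjoint essential curve is a rotation). Your claim that a ``genuinely new'' essential torus leaf is obstructed by tautness is therefore false. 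The paper's proof takes a different and unavoidable route: it represents the components by horizontal foliations \emph{without} closed leaves whose linear contact perturbations realise the $H^1(\Sigma_g,\mathbb{Z})\cong\mathbb{Z}^{2g}$ distinct isotopy classes of vertical contact structures, and then shows that along any smooth path of taut foliations starting at such a foliation no closed leaf can ever appear. That step is where Kopell/Szekeres, the cut-and-reglue of unstable stacks, Thurston's straightening and Matsumoto's theorem are actually used; once no closed leaves appear, Propositions~\ref{fam_def} and~\ref{lin_def} perturb the whole path to an isotopy of non-isotopic contact structures, a contradiction. If you want to salvage your examples $\mathcal{F}_\alpha$ with $\alpha\neq 0$, the correct statement distinguishing them from $\mathcal{F}_{hor}$ is Corollary~\ref{Reebless_comp}, not the constancy of $\Phi$.
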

\begin{proof}
By Giroux \cite{Gir}, Honda \cite{Hon2} all horizontal contact structures on $ST^*\Sigma_g$ are contactomorphic and can be made vertical. Furthermore, their isotopy classes are parametrised by $H^1(\Sigma_g,\mathbb{Z}) \cong \mathbb{Z}^{2g}$, where $H^1(\Sigma_g,\mathbb{Z})$ is identified with the set $[\Sigma_g, S^1]$ of homotopy class of maps $\Sigma_g \to S^1$ which in turn acts on isotopy classes of contact structures via gauge transformations. Choose $\xi_{vert},\xi'_{vert}$ \emph{non-isotopic} vertical contact structures. Such contact structures are linear deformations of foliations $\mathcal{F},\mathcal{F}'$ by Theorem \ref{negative_pert1}. In fact, identifying a vertical contact structure with the canonical contact structure on $ST^*\Sigma_g$ it is easy to see that they are linear deformations of foliations that are descended from left invariant foliations on $\textit{PSL}(2,\mathbb{R})$ (cf.\ \cite{Bow}).

Now suppose $\mathcal{F}_t$ is a smooth family of taut foliations joining $\mathcal{F}$ and $\mathcal{F}'$. Note that the condition of having no closed leaves is an open condition: for if $\mathcal{F}_{s}$ has no closed leaves then by Theorem \ref{Thurston_straight} it is isotopic to a horizontal foliation, and since the Euler number $e(ST^*\Sigma_g) =2g-2 \ne 0$ by assumption, any foliation sufficiently close to $\mathcal{F}_{s}$ is also without closed leaves. Thus the set of $t$ for which $\mathcal{F}_t$ is without closed leaves is open and non-empty since both foliations $\mathcal{F}$ and $\mathcal{F}'$ are without closed leaves, or equivalently the set of values of $t$ for which $\mathcal{F}_t$ has a closed leaf is closed (and possibly empty). In view of this we let $0<t_0<1$ be the smallest value such that $\mathcal{F}_{t_0}$ has closed leaves, which must exist. Otherwise we could linearly perturb the deformation by Proposition \ref{fam_def} to obtain a contradiction, since by assumption the contact deformations $\xi_{vert}$ and $\xi'_{vert}$ of $\mathcal{F}, \mathcal{F}'$ respectively are not isotopic. Note that all the closed leaves of $\mathcal{F}_{t_0}$ are incompressible tori. There is then a finite collection of embeddings $N_i = T_i \times[0,c_i]$ so that the foliation contains no closed leaves outside the union of the $N_i$ and both $T_i \times\{0\}$ and $T_i \times\{c_i\}$ are closed leaves by Lemma \ref{torus_stacks}. After an isotopy we may assume that the $T_i$ are vertical tori (i.e.\ they are tangent to the $S^1$-fibers) and we let $\gamma_i$ denote their image curves in $\Sigma_g$. 

All stacks of torus leaves must be unstable by our assumption on $t_0$, otherwise all foliations $\mathcal{F}_t$ with $t$ sufficiently close to $t_0$ would again have closed (torus) leaves. In particular, the asymptotic slopes of tori near both ends of $N_i$ must agree. We let $N'_i$ be a slight thickening of $N_i$. We then cut out $N'_i$ and reglue along the boundary to obtain a smooth foliation $\mathcal{F}''$. We let $T_i$ be the vertical torus corresponding to the stack $N_i$ and note that the foliation $\mathcal{F}''$ restricts to a linear foliation on each $T_i$. 

We first claim that $\mathcal{F}''$ cannot have any closed leaves. Since each closed leaf in the original foliation was contained in one of the sets $N_i$, any closed leaf of $\mathcal{F}''$ must intersect at least one of the tori $T_i$. We suppose that $L$ is a closed leaf of $\mathcal{F}''$ and will derive a contradiction. The intersection of $L$ with each torus $T_{i}$ (when non-empty) is a collection of parallel circles that are isotopic to the fibers of $M = ST^*\Sigma_g$. Let $A$ be the closure of one of the annular components of $L \setminus ( \cup_i T_i) $ in the completion $\widehat{M}$ of $M \setminus ( \cup_i T_i) $ that is obtained by adding a positive and a negative boundary torus $T^{\pm}_i$ for each $T_i$. The foliation $\mathcal{F}''$ on $M \setminus ( \cup_i T_i) $ extends  to a foliation $\widehat{\mathcal{F}}$ on $\widehat{M}$ in a natural way. Since the characteristic foliation on each $T^{\pm}_i$ is linear, the holonomy of the foliation $\widehat{\mathcal{F}}$ around a boundary component of $\partial A$ is trivial. Thus a neighbourhood of $A$ in $\widehat{M}$ is also foliated by annuli. Using the compactness of the space of compact leaves it follows that the maximal $1$-dimensional family $\{A_t\}_{t \in [0,1]}$ of annuli containing $A$ then contains the boundary tori which $A$ intersects. Hence the family $\{A_t\}_{t \in [0,1]}$ fibers an annulus bundle over $S^1$ inside $\widehat{M}$ and this is the case for each component of $L \setminus ( \cup_i T_i) $. After reidentifying the boundary tori $T^{+}_i$ and $T^{-}_i$ this would give a description of $M$ as a union of annulus bundles over $S^1$ glued along boundary tori. But then $M =ST^*\Sigma_g$ would be fibered by tori, which is obviously a contradiction.

Thus as $\mathcal{F}''$ cannot have any closed leaves, we may apply Thurston's straightening procedure relative to each $T_i$ to obtain a horizontal foliation such that the intersection of $\mathcal{F}''$ with $T_i$ is linear. This is equivalent to the fact that the holonomy around $\gamma_i$ is conjugate to a rotation contradicting (\cite{Mat}, Theorem 2.2) and we conclude that no foliation in the family can have closed leaves. It follows that the family cannot exist and that $\mathcal{F}$ and $\mathcal{F}'$ cannot be deformed to one another through taut foliations. Since there are $\mathbb{Z}^{2g}$ different isotopy classes of contact structures, there are at least this many deformation classes of taut foliations. Finally since the foliations we are considering are by construction horizontal, their tangent distributions are homotopic as plane fields and thus by Larcanch\'e \cite{Lar} they are homotopic as integrable plane fields.
\end{proof}
In fact, the proof of Theorem \ref{taut_inf} shows that if a family of taut foliations $\mathcal{F}_t$ on $ST^*\Sigma_g$ contains a foliation which does not have closed leaves, then the same is true for the entire family. This observation also applies to families of Reebless foliations. Furthermore, since a foliation on $ST^*\Sigma_g$ without closed leaves is isotopic to the suspension foliation given by a Fuchsian representation in view of \cite{Ghy}, we deduce the following corollary.
\begin{cor}\label{Reebless_comp}
Let $\mathcal{F}_{hor}$ be a horizontal foliation on the unit cotangent bundle of a hyperbolic surface $ST^*\Sigma_g$. Then any foliation in the path component of $\mathcal{F}_{hor}$ in the space of Reebless foliations is isotopic to a foliation given by the suspension of a Fuchsian representation.
\end{cor}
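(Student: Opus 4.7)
The plan is to exploit the observation made just before the corollary: the proof of Theorem \ref{taut_inf} already shows that along any smooth path of Reebless foliations on $ST^*\Sigma_g$ beginning at a foliation without closed leaves, the entire path consists of foliations without closed leaves. Granted this, Thurston's straightening together with the Ghys--Matsumoto rigidity theorem for maximal representations delivers the result.

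Concretely, let $\mathcal{F}$ lie in the path component of $\mathcal{F}_{hor}$ in the space of Reebless foliations and choose a smooth path $\mathcal{F}_t$ with $\mathcal{F}_0 = \mathcal{F}_{hor}$ and $\mathcal{F}_1 = \mathcal{F}$. The foliation $\mathcal{F}_{hor}$ has no closed leaves, since its holonomy representation has maximal Euler class and is therefore conjugate to a Fuchsian and hence minimal representation by \cite{Mat}. Suppose for contradiction that some $\mathcal{F}_t$ has a closed leaf, and let $t_0$ be minimal with this property. Then the closed leaves of $\mathcal{F}_{t_0}$ are incompressible tori by Reeblessness, arranged in finitely many stacks $N_i$ (Lemma \ref{torus_stacks}), and each stack is \emph{unstable} since the nearby foliations $\mathcal{F}_{t_0-\epsilon}$ have no closed leaves at all. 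At this point one copies verbatim the endgame of the proof of Theorem \ref{taut_inf}: the tori $T_i$ may be isotoped to be vertical, Kopell's Lemma and Szekeres's theorem force the asymptotic slopes at the two ends of each $N_i$ to agree, and cutting-and-regluing yields a smooth foliation $\mathcal{F}''$ without closed leaves whose intersection with certain vertical tori is linear. Thurston's straightening (Theorem \ref{Thurston_straight}) then isotopes $\mathcal{F}''$ to a horizontal foliation whose holonomy around some essential simple closed curve is conjugate to a rotation, contradicting \cite[Theorem 2.2]{Mat}.

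Since $\mathcal{F}$ must therefore itself be without closed leaves, a further application of Theorem \ref{Thurston_straight} isotopes $\mathcal{F}$ to a horizontal foliation $\mathcal{F}'$ on $ST^*\Sigma_g$. The holonomy of $\mathcal{F}'$ is a representation $\pi_1(\Sigma_g)\to \mathrm{Diff}_+(S^1)$ of maximal Euler class $\pm(2g-2)$, so by Matsumoto's theorem it is topologically conjugate to a Fuchsian representation and by Ghys \cite{Ghy} this conjugacy can be taken to be smooth. Lemma \ref{holonomy_foliation} then realises the smooth conjugacy by a fiber-preserving automorphism that carries $\mathcal{F}'$ to the desired Fuchsian suspension foliation. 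The only real subtlety is verifying that the argument in Theorem \ref{taut_inf} transfers to the Reebless setting, but tautness was invoked there solely to ensure that the torus leaves appearing at $t_0$ were incompressible—a consequence of Reeblessness alone—while the subsequent cut-and-reglue operation and the contradiction with \cite[Theorem 2.2]{Mat} are insensitive to whether the foliations are merely Reebless rather than taut.
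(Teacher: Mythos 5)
Your proposal is correct and follows essentially the same route as the paper: the paper's own justification is precisely the observation that the closed-leaf exclusion argument from the proof of Theorem \ref{taut_inf} (unstable stacks, matching asymptotic slopes, cut-and-reglue, Thurston straightening, contradiction with Matsumoto) applies verbatim to Reebless families since Novikov gives incompressibility of torus leaves, followed by the Ghys--Matsumoto rigidity of maximal representations to identify the closed-leaf-free endpoint with a Fuchsian suspension. You have merely spelled out the details that the paper leaves implicit.
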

It is easy to construct taut foliations $\mathcal{F}_{T}$ with a single vertical torus leaf on any $S^1$-bundle as long as the base has positive genus and we may assume that the tangent distribution of such a foliation is homotopic to a horizontal distribution.  In view of Corollary \ref{Reebless_comp} there can be no Reebless deformation between $\mathcal{F}_{T}$ and any horizontal foliation, even if one allows diffeomorphisms of either foliation. The same applies to any pair of diffeomorphic horizontal foliations whose contact perturbations are not isotopic.

The arguments above also apply to deformations of $C^2$-foliations that are only continuous with respect to the $C^0$-topology. Note, however, that the foliations $\mathcal{F}_T$ and $\mathcal{F}_{hor}$ can in fact be deformed to one another through taut foliations that are only of class $C^0$. This follows by first spinning the horizontal foliation $\mathcal{F}_{hor}$ along the vertical torus $T$, which can be done in a $C^0$-manner.  The remainder of the foliation is determined by a representation of a free group to $\widetilde{\text{Diff}}_+(S^1)$. Joining any two such representations arbitrarily and spiralling into $T$ then gives the desired deformation. We note this in the following proposition:
\begin{prop}\label{C_0_difference}
There exist pairs of smooth taut foliations which can be deformed to one another through taut $C^{0}$-foliations, but not through taut $C^{\infty}$-foliations.
\end{prop}

\section{Anosov foliations}\label{Anosov_fol}
In this section we give an alternative approach to the results obtained above that uses the classification of Anosov foliations of Ghys \cite{Ghy} and results of Matsumoto \cite{Mat0}, \cite{Mat}. We will call a representation \emph{Anosov} if its associated suspension foliation is diffeomorphic to the weak stable foliation of an Anosov flow. Recall that a flow $\Phi^t_X$ generated by a vector field $X$ on a closed 3-manifold $M$ is Anosov if the tangent bundle splits as a sum of line bundles that are invariant under the flow
$$TM = E^{u} \oplus E^{s} \oplus X$$
such that for some choice of metric and $C, \lambda >0$ and for any $t >0$
$$||(\Phi^t_X)_*(v_{u})|| \geq C^{-1} e^{\lambda t} ||v_{u}|| \text{ and }  ||(\Phi^t_X)_*(v_{s})|| \leq C e^{-\lambda t} ||v_{s}||,$$
where $v_{u}\in E^{u}, v_{s} \in E^{s}$. The line fields $E^{u}, E^{s}$ are called the strong unstable resp.\ stable foliations of the flow and the foliations $\mathcal{F}_u, \mathcal{F}_s$ tangent to the integrable plane fields 
$$ E^{u} \oplus X \ , \  E^{s} \oplus X$$
are called the weak unstable resp.\ stable foliations of the flow. In general, the weak stable resp.\ unstable foliations need not be smooth and this puts strong restrictions on the possible flows as we will see below. However, as we are interested in smooth foliations we will always assume that both the weak stable and unstable foliations are \emph{smooth}.

An important property of Anosov flows and foliations is their structural stability. $C^1$-stability of the Anosov condition goes back to Anosov's original paper \cite{Ano}. Moreover, the dynamics of an Anosov representation in terms of its \emph{translation numbers} also turn out to be $C^0$-stable. 
\begin{defn}[Translation number]
Let $\widetilde{\phi} \in \widetilde{\text{Homeo}}_+(S^1)$ be considered as a $1$-periodic diffeomorphism of $\mathbb{R}$. The translation number is defined as
$$\textrm{tr}(\widetilde{\phi}) = \lim_{n \to \infty}\frac{\widetilde{\phi}^n(x)}{n}.$$
\end{defn}
\noindent If $\phi \in \text{Homeo}_+(S^1)$ then the \emph{rotation number} $\textrm{rot}(\phi)$ is defined as the image of $\textrm{tr}(\widetilde{\phi})$ in $S^1 = \mathbb{R}/\mathbb{Z}$ for any lift of $\phi$ to $\widetilde{\text{Homeo}}_+(S^1)$. 

The prime examples of Anosov foliations come from the weak stable foliations of geodesic flows on hyperbolic surfaces and suspensions of linear Anosov maps on $T^2$. If one assumes that the weak stable foliation of an Anosov flow is required to be sufficiently regular, then Ghys \cite{Ghy} has shown that these are the only possibilities up to taking finite covers. This result is usually only stated in the case of the unit tangent bundle of a hyperbolic surface (\cite{Ghy} Theorem 5.3). A more general result that classifies smooth Anosov foliations on all $S^1$-bundles can easily be gleaned from Ghys' arguments, but as there is no clear statement of this in \cite{Ghy} we include a brief account of the main steps to obtain this classification (cf.\ Theorem \ref{Ghys_top} below).

If a smooth Anosov flow has sufficiently smooth weak stable foliation (in fact $C^{1,1}$ would suffice), then it admits a transverse projective structure.
\begin{thm}[\cite{Ghy} Theorem 4.1]\label{Ghys_proj}
Let $X$ generate a smooth Anosov flow on a closed $3$-manifold whose weak stable foliation $\mathcal{F}_s$ is smooth. Then $\mathcal{F}_s$ admits a transverse projective structure. 
\end{thm}
\noindent Using this projective structure one then gets a topological classification of Anosov flows whose weak stable foliations are smooth by results of Barbot.
\begin{thm}[\cite{Ghy} Theorem 4.7]\label{Ghys_proj_2}
Let $X$ generate a smooth Anosov flow on a closed $3$-manifold whose weak stable foliation $\mathcal{F}_s$ is smooth. Then $\mathcal{F}_s$ is topologically conjugate to the weak stable foliation of the suspension of a linear Anosov diffeomorphism on $T^2$ or to a homogeneous Anosov flow on $\widetilde{PSL(2,\mathbb{R})}/\widetilde{\Gamma}$ for some co-compact lattice $\widetilde{\Gamma}$ lying in the universal cover of $\textit{PSL}(2,\mathbb{R})$.
\end{thm}
As a consequence of Theorem \ref{Ghys_proj_2} we obtain that if the foliation in question is a suspension foliation on an $S^1$-bundle, then the holonomy representation is topologically conjugate to the inclusion of a lattice  $$\pi_1(\Sigma_g)\cong \Gamma \hookrightarrow G_n,$$
where $G_n$ denotes some $n$-fold cover of $\textit{PSL}(2,\mathbb{R})$.

In order to obtain a smooth classification of Anosov flows/foliations Ghys then observes that the techniques used to prove Theorem \ref{Ghys_proj} apply more generally to flows that are only \emph{topologically} conjugate to an Anosov flow whose weak stable foliation is smooth (cf.\ \cite{Ghy} Section 5). In this way there is a version of Theorem \ref{Ghys_proj} which holds under the weaker assumption that a suspension foliation is only \emph{topologically conjugate} to an Anosov foliation and this yields the following smooth classification result.
\begin{thm}[Smooth Anosov Foliations on $S^1$-bundles \cite{Ghy}]\label{Ghys_top}
Let $\mathcal{F}$ be a suspension foliation on an $S^1$-bundle whose holonomy map is topologically conjugate to one that is Anosov. Then the holonomy map of $\mathcal{F}$ is smoothly conjugate to a suspension of a co-compact lattice in a finite cover of $\textit{PSL}(2,\mathbb{R})$.

In particular, any Anosov foliation on an $S^1$-bundle is smoothly conjugate to the suspension of a co-compact lattice in some finite cover of $\textit{PSL}(2,\mathbb{R})$.
\end{thm}
\begin{proof}[Sketch of proof] 
One first alters the smooth structure on the underlying $S^1$-bundle so that the foliation $\mathcal{F}$ becomes the weak stable foliation of a smooth flow that is topologically conjugate to one that is Anosov and has smooth weak stable foliation (cf.\ \cite{Ghy} pp.\ 177--8). The resulting weak stable foliation is then topologically conjugate to $\mathcal{F}$ and we denote it by $\mathcal{F}'$. 

The arguments used to obtain a transverse projective structure for $\mathcal{F}'$ (Markov Partitions, etc.), then also give a transverse projective structure on $\mathcal{F}$ (cf.\ \cite{Ghy} pp.\ 179--181). It follows from the classification of projective structures on $S^1$ as described in the proof of (\cite{Ghy} Lemma 5.1) that the holonomy group of $\mathcal{F}$ is then \emph{smoothly conjugate} to a subgroup of $G_n$ (which \emph{a priori} may not be a lattice).  By Theorem \ref{Ghys_proj_2} the holonomy group is also topologically conjugate to a lattice in $G_n$ and this implies that the holonomy map is injective and its image $\Gamma \subseteq G_n$ contains no (non-trivial) elliptic elements. The same is then true of the projection $\overline{\Gamma} \subseteq \textit{PSL}(2,\mathbb{R})$ of $\Gamma$ and it follows from standard results about subgroups of $\textit{PSL}(2,\mathbb{R})$ that $\overline{\Gamma}$ is either discrete or \emph{elementary} (i.e.\ it has a finite orbit when acting on the closure of the Poincar\'{e} disc). If the subgroup $\overline{\Gamma}$ were elementary, then after taking a finite index subgroup the action would fix a point on the boundary of the disc. Thus (up to conjugacy) $\overline{\Gamma}$ is contained in the subgroup of upper triangular matrices and is hence solvable, which is absurd. It then follows that $\overline{\Gamma}$, and hence $\pi_1(\Sigma_g) \cong \Gamma$, is a discrete subgroup, which necessarily acts co-compactly and the classification follows.
\end{proof}
\noindent It is customary to call foliations that are given associated to a left-invariant Anosov flow on a left quotient of a Lie group by a co-compact lattice  \emph{algebraic Anosov} and in the sequel we adopt this terminology. 
\begin{rem}\label{torsion_lattice}
We note that Theorem \ref{Ghys_top} readily generalises to horizontal foliations on an arbitrary Seifert fibered space $M$ yielding a similar classification result. In this situation, one can consider the holonomy map associated to the foliation as a representation of the orbifold fundamental group $\pi^{orb}_1(B)$ of the base (which is necessarily hyperbolic), by considering the holonomy around curves in the base that avoid the orbifold points. Assuming that $\mathcal{F}$ is topologically conjugate to a smooth Anosov foliation one can alter the smooth structure exactly as above so that the flow becomes smooth and then the arguments of (\cite{Ghy} Section 5) sketched above provide a transverse projective structure. Thus, just as in the case of an $S^1$-bundle, the holonomy representation is smoothly conjugate to one with values in some $G_n$. One then applies Theorem \ref{Ghys_proj_2} and argues exactly as above, after first taking a torsion free subgroup of finite index, to deduce that the resulting holonomy map must then be given by the inclusion of a co-compact lattice in $G_n$.
\end{rem}
As a consequence of the special structure of Anosov flows on $S^1$-bundles, the rotation numbers of an Anosov representation are stable under deformations. We are grateful to S.\ Matsumoto for suggesting a simplified proof of the following lemma.
\begin{lem}\label{top_stab_An}
Let $\rho_{An} \in \text{Rep}_e(\pi_1(\Sigma_g),\text{Diff}_+(S^1))$ be an Anosov representation. Then for any representation $\rho$ that is sufficiently $C^0$-close to $\rho_{An}$ the rotation numbers of $\rho(\gamma)$ are rational for all $\gamma \in \pi_1(\Sigma_g)$.
\end{lem}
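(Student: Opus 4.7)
The plan is to combine local $C^0$-stability of rotation numbers for Morse--Smale circle diffeomorphisms with a rigidity argument, in order to upgrade the local constancy one obtains for each individual $\gamma$ to uniform local constancy over all $\gamma$ simultaneously.

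First, by Ghys' smooth conjugacy theorem, after smooth conjugation we may assume that $\rho_{An}$ takes values in a fixed copy of $G_n\subset \text{Diff}_+(S^1)$, where $G_n\to \text{PSL}(2,\mathbb{R})$ is the $n$-fold cover and the composition of $\rho_{An}$ with this cover is a Fuchsian representation. Since every non-trivial element of a cocompact torsion-free Fuchsian lattice is hyperbolic, for every $\gamma\in\pi_1(\Sigma_g)\setminus\{1\}$ the element $\rho_{An}(\gamma)$ is a lift of a hyperbolic isometry, and its $n$-th power $\rho_{An}(\gamma)^n$ has precisely $2n$ fixed points on the $n$-fold covered circle. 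Each of these is \emph{topologically hyperbolic}, in the sense that the graph of $\rho_{An}(\gamma)^n$ meets the diagonal transversally; the rotation number $\text{rot}(\rho_{An}(\gamma))=k_\gamma/n$ is then detected by the translation number of a lift to $\widetilde{\text{Diff}}_+(S^1)$.

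Next, I establish $C^0$-stability of the rotation number for each fixed $\gamma$. The transverse crossings of the graph of $\rho_{An}(\gamma)^n$ with the diagonal persist under sufficiently small $C^0$-perturbations: if $\phi$ is $C^0$-close to $\rho_{An}(\gamma)$, then $\phi^n$ still admits $2n$ fixed points coming from transverse zeros of $\phi^n-\text{id}$, and the translation number of a lift of $\phi$ (which is continuous in the $C^0$ topology) takes values in $(1/n)\mathbb{Z}$ near $k_\gamma/n$, hence must equal $k_\gamma/n$. This produces a $C^0$-neighborhood $V_\gamma$ of $\rho_{An}$ in $\text{Rep}(\pi_1(\Sigma_g),\text{Diff}_+(S^1))$ on which $\text{rot}(\rho(\gamma))=k_\gamma/n$.

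The main obstacle is upgrading this to a \emph{single} $C^0$-neighborhood valid for every $\gamma\in\pi_1(\Sigma_g)$, since a priori $V_\gamma$ could shrink as the word length of $\gamma$ grows and one cannot control $\rho(\gamma)$ uniformly in $\gamma$ from $C^0$-closeness on generators alone. To handle this I appeal to Matsumoto's theorem, which characterises semi-conjugacy classes of representations $\pi_1(\Sigma_g)\to\text{Homeo}_+(S^1)$ as the level sets of the bounded Euler class, together with the fact that semi-conjugate representations share rotation numbers on every element of $\pi_1(\Sigma_g)$. The bounded Euler class depends continuously on $\rho$ in the $C^0$-topology, and its integer values form a discrete subset of $H^2_b(\pi_1(\Sigma_g),\mathbb{Z})$, so in a sufficiently small $C^0$-neighborhood of $\rho_{An}$ the bounded Euler class is constant. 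Every such $\rho$ is then semi-conjugate to $\rho_{An}$, yielding the required uniform agreement of rotation numbers. The hardest ingredient is making the discreteness/continuity of the bounded Euler class sharp enough to produce this uniform neighborhood; an alternative would be to exploit structural stability of the Anosov foliation associated to $\rho_{An}$, but the standard statement requires $C^1$-control and would have to be adapted to the $C^0$ setting.
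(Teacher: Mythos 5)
Your first two steps are sound: reducing to a lattice in $G_n$ via Ghys' rigidity theorem, and observing that for each \emph{fixed} $\gamma$ the topologically hyperbolic fixed points of $\rho_{An}(\gamma)^n$ persist under $C^0$-perturbation, forcing $\text{rot}(\rho(\gamma))$ into $\tfrac{1}{n}\mathbb{Z}$ and hence, by continuity, to be locally constant. You also correctly identify the real difficulty: the neighbourhood $V_\gamma$ this produces degenerates as the word length of $\gamma$ grows, so the whole content of the lemma is the uniformity over $\pi_1(\Sigma_g)$.

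It is precisely at this point that your argument has a genuine gap, and I would say a circular one. You propose to obtain uniformity from continuity of the bounded Euler class in the $C^0$-topology together with discreteness of its integral values. Neither half of this is available. The real bounded Euler class is only known to be continuous for the weak-$^*$ topology on $H^2_b(\pi_1(\Sigma_g),\mathbb{R})$, which is far too weak for a discreteness argument (the relevant classes all have representatives with values in $\{0,1\}$, hence lie in a bounded, non-discrete subset of an infinite-dimensional dual space), and by Matsumoto's theorem the bounded \emph{integral} Euler class is determined by the real class \emph{together with} rotation numbers on generators -- so local constancy of the integral class is not weaker than what you are trying to prove. Indeed, in the paper the logical order is reversed: local constancy of rotation numbers (this lemma) is the input that, via the translation-number cocycle identity, yields local constancy of the bounded integral Euler class in Theorem \ref{open_closed_Anosov}; it cannot also serve as the source of that constancy. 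The paper obtains the uniform neighbourhood by an entirely geometric mechanism: a $C^0$-small perturbation of the suspension foliation $\mathcal{F}_u$ stays transverse to the weak \emph{stable} foliation $\mathcal{F}_s$ of the Anosov flow, and on each leaf of $\widetilde{\mathcal{F}}_s$ the flow lines are geodesics asymptotic to a common point at infinity, so that the angle they make with the boundary of an $\epsilon$-neighbourhood of \emph{any} geodesic is controlled uniformly. Hence the perturbed direction field $\mathcal{F}_\rho\cap\mathcal{F}_s$ has orbits trapped within $\epsilon$ of every closed geodesic simultaneously, giving $|\widetilde{\gamma}^N(\tilde x_0)-\widetilde{\gamma_{An}}^N(\tilde x_0)|<\epsilon$ for all $N$ and all $\gamma$ at once, which is what forces the rotation numbers to agree. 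If you want to salvage your approach you would need to replace the bounded-cohomology step by some such uniform dynamical estimate; the structural stability of the Anosov flow that you mention in passing is indeed the right circle of ideas, but the point of the paper's argument is that transversality to $\mathcal{F}_s$ plus hyperbolic geometry of the stable leaves already suffices at the $C^0$ level, with no $C^1$-control required.
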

\begin{proof}
After lifting $\rho_{An}$ to $\widetilde{\text{Rep}}_e(\pi_1(\Sigma_g),\widetilde{\text{Diff}}_+(S^1))$ we consider the foliation $\mathcal{F}$ given by Lemma \ref{holonomy_foliation}. Let $\mathcal{F} = \mathcal{F}_s, \mathcal{F}_u$ be the weak stable and unstable foliations of the Anosov flow generated by the normalised vector field $X$ generating $\mathcal{F}_u \cap \mathcal{F}_s$. For $\rho$ sufficiently $C^0$-close to $\rho_{An}$ we choose lifts to $\widetilde{\text{Rep}}_e(\pi_1(\Sigma_g),\widetilde{\text{Diff}}_+(S^1))$ that are also $C^0$-close. Then the tangent distributions of the associated foliation $T\mathcal{F}_{\rho}$ remains $C^0$-close to $T \mathcal{F}_s$ (cf.\ Corollary \ref{continuity}) and hence the foliation $\mathcal{F}_\rho$ remains transverse to $\mathcal{F}_u$, since transversality is an open condition. Moreover, the normalised vector field $X'$ generating the intersection $\mathcal{F}_{\rho} \cap \mathcal{F}_{u}$ is $C^0$-close to $X$. 

By Theorem \ref{Ghys_top} the foliation $ \mathcal{F}_s$ is smoothly conjugate to an algebraic Anosov foliation given by the suspension of a lattice in some covering of $\textit{PSL}(2,\mathbb{R})$. Hence we can assume that the flow $\Phi_t^X$ is conjugate to a covering of the geodesic flow on the unit tangent bundle $ST\Sigma_g$ for some choice of hyperbolic metric on $\Sigma_g=\mathbb{H}^2/ \Gamma$. We consider the geodesic flow on the unit tangent bundle of $\mathbb{H}^2$ as well as its associated weak unstable and stable foliations $\widetilde{\mathcal{F}}_u, \widetilde{\mathcal{F}}_s$. Note that the projection to the base induces an isometry on the leaves of $\widetilde{\mathcal{F}}_u$ and $\widetilde{\mathcal{F}}_s$ respectively. These coverings fit into the following commutative diagram:
$$\xymatrix{ \pi^*ST\mathbb{H}^2 \ar[r] \ar[d]& M(e)\ar[d]^{\pi}\\
ST\mathbb{H}^2 \ar[d] \ar[r]& ST\Sigma_g \ar[d]\\ 
\mathbb{H}^2 \ar[r] &\Sigma_g = \mathbb{H}^2/ \Gamma.
}$$
The induced flow on a leaf $L$ of the weak unstable foliation $\widetilde{\mathcal{F}}_u$ consists of geodesics that are all asymptotic in forward time to the same point on $\partial_{\infty} \mathbb{H}^2$. In particular, the angle between the flow lines on $L$ and the boundary of an $\epsilon$-neighbourhood $N_{\epsilon}(\sigma)$ with respect to the hyperbolic metric of \emph{any} (geodesic) flow line $\sigma$ on $L$ is constant and the flow points into $N_{\epsilon}(\sigma)$ along the boundary. We consider an isometric lift $\overline{L}$ of $L$ to $\pi^*ST\mathbb{H}^2$. Assume that $\sigma$ lies on a leaf $\overline{L}$ and projects to a closed lift $\overline{\gamma}$ in $M(e)$ of a $k$-fold cover of a simple closed geodesic $\gamma$ in $ST\Sigma_g$. Note that in this case the image of the leaf $\overline{L}$ in $M(e)$ is a cylinder. We factor the covering map of $M(e)$ above, by first taking the cyclic covering corresponding to the element $[\gamma] \in \Sigma_g$:
$$\xymatrix{ \overline{L} \subseteq\pi^*ST\mathbb{H}^2 \ar[r] \ar[d]&\overline{M}(e) = \pi^*ST\mathbb{H}^2/\mathbb{Z} \ar[r] \ar[d] & M(e)\ar[d]\\
\mathbb{H}^2 \ar[r] &\mathbb{H}^2/\mathbb{Z} \ar[r] &\Sigma_g = \mathbb{H}^2/ \Gamma.}$$
Then the image of $N_{\epsilon}(\sigma)$ is an annular neighbourhood $\overline{N}_{\epsilon} = N_{\epsilon}(\sigma)/ \mathbb{Z}$ of a lift $\gamma'$ that projects \emph{diffeomorphically} to $\overline{\gamma}$, so that the vector field $X$ has angle uniformly bounded away from $0$ along $\partial \overline{N}_{\epsilon}$. Since $X'$ is $C^0$-close to $X$ on $M(e)$ the same is true for the lifted vector field on the covering $\overline{M}(e)$. Thus the first return map of the flow on $\overline{N}_{\epsilon}$ is a contraction so that $X'$ also has a closed orbit in $\overline{N}_{\epsilon}$, and this closed orbit is isotopic to $\gamma'$. It follows that the rotation number of the holonomy given by the $k$-th iterate $\gamma^k$ is $1$ and thus $\textrm{rot}(\rho(\gamma))$ must be rational.

Hence the lemma holds for holonomies around simple closed geodesics in the base. However, every free homotopy class of loops in $\Sigma_g$ has a simple geodesic representative after passing to some finite cover. We observe that the arguments are not affected by taking finite covers. Thus we can assume that every element in $\pi_1(\Sigma_g)$ is conjugate to one that is represented by a simple closed geodesic and since the rotation number is conjugation invariant the lemma follows.
\end{proof}
In order to exploit the topological stability of Anosov foliations, we will need a result of Matsumoto that characterises the conjugacy type of a representation in terms of translation numbers (cf.\ \cite{Mat0} Theorem 1.1).
\begin{lem}\label{Matsumoto_Lemma}
Let $\rho_1,\rho_2 \in \text{Rep}\thinspace (\Gamma,\text{Homeo}_+(S^1))$ for an arbitrary finitely generated group $\Gamma$ and assume for all $g \in \Gamma$ there are lifts $\widetilde{\rho_1(g)},\widetilde{\rho_2(g)}$ to $\widetilde{\text{Homeo}}_+(S^1)$ such that the translation numbers satisfy
$$tr(\widetilde{\rho_1(g_1)}\widetilde{\rho_1(g_2)})-tr(\widetilde{\rho_1(g_1)}) -tr(\widetilde{\rho_1(g_2)})= tr(\widetilde{\rho_2(g_1)}\widetilde{\rho_2(g_2)})-tr(\widetilde{\rho_2(g_1)}) -tr(\widetilde{\rho_2(g_2)})$$
for all $g_1,g_2 \in \Gamma$. If in addition $rot(\rho_1(s_k)) = rot(\rho_2(s_k))$ for some generating set $\langle s_k \rangle \subset G$, then $\rho_1$ and $\rho_2$ have the same bounded integral Euler class and the representations are semi-conjugate. If the actions are minimal, then they are in fact conjugate.
\end{lem}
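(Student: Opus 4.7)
The plan is to reduce the statement to Matsumoto's Theorem 1.1 of \cite{Mat0}, which shows that the bounded integer Euler class of a circle action is determined by the translation-number data and the rotation numbers on a generating set, together with the classical fact of Ghys that two minimal actions on $S^1$ with equal bounded integer Euler class are topologically conjugate.

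For each $i \in \{1,2\}$, the chosen set-theoretic lift $\widetilde{\rho_i}$ determines an integer-valued $2$-cocycle
\[
c_i(g_1, g_2) \;=\; \widetilde{\rho_i(g_1)}\,\widetilde{\rho_i(g_2)}\,\widetilde{\rho_i(g_1 g_2)}^{-1} \;\in\; \mathbb{Z},
\]
where $\mathbb{Z}$ is identified with the central kernel of the extension $\widetilde{\text{Homeo}}_+(S^1)\to \text{Homeo}_+(S^1)$; by construction $c_i$ represents the bounded integer Euler class of $\rho_i$ in $H^2_b(\Gamma,\mathbb{Z})$. Since the translation number is homogeneous and satisfies $tr(n\cdot\widetilde f)=n+tr(\widetilde f)$ for a central element $n\in\mathbb{Z}$, a direct computation yields
\[
c_i(g_1,g_2) \;=\; \tau_i(g_1,g_2) \;+\; \delta t_i(g_1,g_2),
\]
where $\tau_i$ is the translation-number defect cocycle appearing in the hypothesis, $t_i(g):=tr(\widetilde{\rho_i(g)})$, and $\delta$ denotes the group-cohomology coboundary.

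Combining this with the hypothesis $\tau_1=\tau_2$ gives $c_1-c_2=\delta(t_1-t_2)$. Reducing modulo $\mathbb{Z}$, and using that $t_i(g)\bmod\mathbb{Z}=rot(\rho_i(g))$, the cochain $t_1-t_2$ becomes a $1$-cochain valued in $\mathbb{R}/\mathbb{Z}$ with vanishing coboundary, hence a homomorphism $\Gamma\to\mathbb{R}/\mathbb{Z}$ sending $g$ to $rot(\rho_1(g))-rot(\rho_2(g))$. By hypothesis this homomorphism vanishes on the generating set $\{s_k\}$, hence vanishes identically; in particular $t_1(g)-t_2(g)\in\mathbb{Z}$ for every $g\in\Gamma$. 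One then invokes Matsumoto's Theorem 1.1 to upgrade the identity $c_1-c_2=\delta(t_1-t_2)$, with $t_1-t_2$ integer-valued but possibly unbounded, to an equality of the bounded integer Euler classes $e^b_{\mathbb{Z}}(\rho_1)=e^b_{\mathbb{Z}}(\rho_2)$ in $H^2_b(\Gamma,\mathbb{Z})$. Ghys' theorem then yields a monotone semi-conjugacy of the two actions, and the minimality assumption forces this semi-conjugacy to be a homeomorphism, producing the desired topological conjugacy.

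The main obstacle is the last step: deducing equality of \emph{bounded} integer cohomology classes from a coboundary relation involving an integer-valued but a priori unbounded cochain $t_1-t_2$. This is precisely the point where Matsumoto's finer analysis is needed; the rotation-number hypothesis on the generating set is tailored to supply exactly the boundedness correction that makes the unbounded coboundary $\delta(t_1-t_2)$ trivial in $H^2_b(\Gamma,\mathbb{Z})$.
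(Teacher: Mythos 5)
Your argument is correct and matches the paper's treatment: the paper states this lemma without proof, as a direct citation of Matsumoto's Theorem 1.1 together with the standard facts that equality of bounded integral Euler classes yields a monotone semi-conjugacy and that a semi-conjugacy between minimal actions is a conjugacy, which is exactly the reduction you carry out. The only imprecision is your opening claim that $c_i$ ``by construction'' represents the bounded integral Euler class in $H^2_b(\Gamma,\mathbb{Z})$ --- for an arbitrary lift $c_i$ need not even be bounded, and bounded cocycles differing by the coboundary of an unbounded integer cochain can represent different bounded classes --- but you correctly flag this as precisely the point where Matsumoto's theorem does the real work, so the logic of the reduction stands.
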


\noindent With the aid of this lemma we obtain the following theorem, which answers a question posed to us by Y.\ Mitsumatsu. For the statement we let $a_i,b_i \in \pi_1(\Sigma_g)$ be standard generators for the fundamental group.

\begin{thm}\label{open_closed_Anosov}
Let $\rho_t$ be a $C^0$-continuous path in $\text{Rep}(\pi_1(\Sigma_g),\text{Diff}_+(S^1))$ such that $\rho_0$ is Anosov. Then $\rho_t$ consists entirely of Anosov representations.

Moreover, the space of Anosov representations $Rep_{An} \subset \text{Rep}(\pi_1(\Sigma_g),\text{Diff}_+(S^1))$ has finitely many $C^0$-path components and the map
$$Rep_{An} \longrightarrow (\mathbb{Z}_k)^{2g} \quad , \quad \rho \longmapsto (\text{rot}(\rho(a_i)),\text{rot}(\rho(b_i)))_{i=1}^{g}$$
induces a bijection of path components, where $k$ is such that the Euler number of the underlying $S^1$-bundles satisfies $k \thinspace e = 2g-2$. 
\end{thm}
\begin{proof}
Let $\rho_t$ be a $C^0$-continuous path starting at an Anosov representation $\rho_0$. We set
$$S_{An} = \{t \ | \  \rho_{\tau} \text{ is Anosov for all } 0 \le \tau \le t \}.$$
We first show that $S_{An}$ is open. Assume that $\rho_{s}$ is Anosov for all $\tau \le s \in [0,1]$. By Lemma \ref{top_stab_An} the rotation numbers of $\rho_{t}(g)$ are rational for any $g \in \pi_1(\Sigma_g)$ and $t$ sufficiently close to $s$ (independently of $g$). Thus by the continuity of the rotation number, these rotation numbers must be constant for $t$ close to $s$. We choose a lift $\widetilde{\rho_{t}(g)}$ of the path $\rho_{t}(g)$ to $\widetilde{\text{Diff}}_+(S^1)$ for each $g\in \pi_1(\Sigma_g)$. Then $\widetilde{\rho_{t}(g)}$ has the same translation number as $\widetilde{\rho_{s}(g)}$ for all elements $g$. It follows that 
$$tr(\widetilde{\rho_{t}(g_1)}\widetilde{\rho_{t}(g_2)}) \equiv tr(\widetilde{\rho_{t}(g_1g_2)}) =  tr(\widetilde{\rho_s(g_1g_2)})\equiv tr(\widetilde{\rho_s(g_1)}\widetilde{\rho_s(g_2)}) \text{ mod } \mathbb{Z}$$
and we conclude by continuity of translation numbers that  $tr(\widetilde{\rho_{t}(g_1)}\widetilde{\rho_{t}(g_2)}))$ is constant for all $t$ sufficiently close to $s$. It follows that the hypotheses of Lemma \ref{Matsumoto_Lemma} are satisfied for $\rho_s$ and $\rho_{t}$ where $|s-t|<\epsilon$ is sufficiently small and both representations have the same bounded integral Euler class and are thus semi-conjugate. 

Furthermore, the action on $S^1$ induced by $\rho= \rho_{t}$ either has a finite orbit, an exceptional minimal set or it is minimal. Note that the first case is ruled out since the Euler class is non-zero. If the action had an exceptional minimal set $K \subset S^1$, then the semi-proper leaves of the associated suspension foliation must have infinitely many ends by Duminy's Theorem (cf.\ \cite{CaCo}). Moreover, since the actions are semi-conjugate, we conclude that the Anosov foliation given by the suspension of $\rho_{s}$ also has a leaf with infinitely many ends. But the weak stable foliation of an Anosov flow can have only leaves with $1$ or $2$ ends yielding a contradiction. Thus in fact $\rho$ itself must be minimal and hence again by Lemma \ref{Matsumoto_Lemma} it is topologically conjugate to $\rho_{s}$. Finally by Theorem \ref{Ghys_top} the representation $\rho = \rho_{t}$ is then smoothly conjugate to an (algebraic) Anosov representation and the openness of the set $S_{An}$ follows.

We next show that $S_{An}$ is also closed. Let $(t_n)$ be a sequence of elements in $S_{An}$ converging to $s$. By construction we can assume that this convergence is monotone, i.e.\ $t_n \nearrow s$. Now by the arguments above the rotation numbers of $\rho_t(g)$ are constant for each $g\in \pi_1(\Sigma_g)$ and $t < s$ and it follows that $\rho_t(g)$ is also constant on the interval $[0,s]$. Taking lifts it again follows that:
$$tr(\widetilde{\rho_{t}(g_1)}\widetilde{\rho_{t}(g_2)}) \equiv tr(\widetilde{\rho_s(g_1)}\widetilde{\rho_s(g_2)})  \text{ mod } \mathbb{Z} \text{, for all } t \in[0,s].$$
Then arguing exactly as above we deduce that $\rho_{s}$ is topologically, and hence smoothly, conjugate to an Anosov representation. Consequently the set
$S_{An}$ is both open and closed and it is obviously non-empty. We conclude that the path $\rho_t$ is wholly contained in $Rep_{An}$, showing that this set indeed consists of $C^0$-path components of $\text{Rep}(\pi_1(\Sigma_g),\text{Diff}_+(S^1))$.

Due to Theorem \ref{Ghys_top} and results of Goldman \cite{Gol} we can now describe the $C^0$-path components of the space of Anosov representations explicitly. Any Anosov representation is smoothly conjugate to an embedding of a discrete subgroup in the $k$-fold cover $G_k$ of $\textit{PSL}(2,\mathbb{R})$ by Theorem \ref{Ghys_top}, where $k$ is determined by the Euler class of the representation. The number of components of $\text{Rep}_{max}(\pi_1(\Sigma_g),G_k)$ is finite by \cite{Gol} and are distinguished by elements in $H^1(\pi_1(\Sigma_g), \mathbb{Z}_k)$. In particular, the path components of $\text{Rep}_{max}(\pi_1(\Sigma_g),G_k)$ can be distinguished by the rotation numbers on the images of  generators $a_i,b_i$. To see this note that the rotation number of an algebraic Anosov representation lies in the $k$-th roots of unity $\mathbb{Z}_k \subset S^1$ so that the rotation numbers $\rho_{An}(a_i),\rho_{An}(b_i)$ are constant on $C^0$-components. This concludes the proof, since the maps $\{a_1,b_1,...,a_g,b_g \} \to \mathbb{Z}_k$ given by $\textrm{rot}(\rho_{An})$ correspond precisely to the elements $H^1(\pi_1(\Sigma_g), \mathbb{Z}_k)$ that distinguish components of $\text{Rep}_{max}(\pi_1(\Sigma_g),G_k)$.
\end{proof}
\noindent As a consequence of Theorem \ref{open_closed_Anosov} we obtain the following extension of Ghys and Matsumoto's global stability statement about conjugacy classes of representations in $\text{Rep}(\pi_1(\Sigma_g),\text{Diff}_+(S^1))$ for the case of maximal Euler class \cite{Ghy}, \cite{Mat} to other topological components. 
\begin{cor}\label{Anosov_comp}
Any representation $\rho \in \text{Rep}(\pi_1(\Sigma_g),\text{Diff}_+(S^1))$ that lies in the $C^0$-path component of an Anosov representation $\rho_{An}$ is smoothly conjugate to an embedding of a discrete subgroup in some finite cover of $\textit{PSL}(2,\mathbb{R})$ and is topologically conjugate to $\rho_{An}$. In particular, it is an injective discrete co-compact representation.
\end{cor}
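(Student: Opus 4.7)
The plan is to deduce this corollary directly from Theorem \ref{open_closed_Anosov} together with the classification results cited in its proof, since almost all of the technical work has already been done there. Given that $\rho$ lies in the $C^0$-path component of $\rho_{An}$, I would first choose a continuous path $\rho_t$ in $\text{Rep}(\pi_1(\Sigma_g),\text{Diff}_+(S^1))$ with $\rho_0 = \rho_{An}$ and $\rho_1 = \rho$. The first half of Theorem \ref{open_closed_Anosov} then applies verbatim: the set $S_{An} = \{t : \rho_t \text{ is Anosov}\}$ is both open and closed and contains $0$, so the entire path consists of Anosov representations. In particular $\rho$ itself is Anosov.

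To extract the smooth conjugacy claim, I would invoke Ghys' rigidity theorem \cite{Ghy}, which was already used inside the proof of Theorem \ref{open_closed_Anosov} to upgrade topological conjugacies to smooth ones. Applied to the Anosov representation $\rho$, it yields a smooth conjugacy to an embedding of a discrete subgroup in some finite cover $G_k$ of $\text{PSL}(2,\mathbb{R})$, where the index $k$ is determined by the Euler class of $\rho$ (which is the same as that of $\rho_{An}$ by continuity of the Euler class).

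For the topological conjugacy between $\rho$ and $\rho_{An}$ themselves, I would use the finer information in the second half of Theorem \ref{open_closed_Anosov}: the $C^0$-path components of the Anosov locus $Rep_{An}$ are distinguished by the rotation numbers on a fixed set of standard generators $a_i,b_i$. Since $\rho$ and $\rho_{An}$ lie in the \emph{same} path component, these rotation numbers agree. Combined with the local constancy of the translation-number cocycle $\mathrm{tr}(\widetilde{\rho_t(g_1)}\widetilde{\rho_t(g_2)}) - \mathrm{tr}(\widetilde{\rho_t(g_1)}) - \mathrm{tr}(\widetilde{\rho_t(g_2)})$ along $\rho_t$ (established during the openness argument), this allows a direct application of Lemma \ref{Matsumoto_Lemma} to the pair $(\rho, \rho_{An})$. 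Minimality of both actions -- required for Matsumoto's lemma -- follows from the fact that they are conjugate, via \cite{Ghy}, to embeddings of surface groups in finite covers of $\text{PSL}(2,\mathbb{R})$, whose circle actions are well known to be minimal. The lemma then provides the topological conjugacy.

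Finally, injectivity is automatic from the smooth conjugacy: an embedding of a surface group as a discrete subgroup of a Lie group is by definition an injective homomorphism. I do not anticipate a substantial obstacle here, as Theorem \ref{open_closed_Anosov} already carries out the real work; the only point that requires care is ensuring that the rotation number invariants really pin down $\rho$ up to conjugacy to $\rho_{An}$ (rather than merely to \emph{some} Anosov representation in the same $G_k$-conjugacy class), which is exactly what the component-distinguishing statement in Theorem \ref{open_closed_Anosov} guarantees.
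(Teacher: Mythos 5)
Your proposal is correct and follows essentially the same route as the paper: the corollary is stated there as an immediate consequence of Theorem \ref{open_closed_Anosov}, whose proof already contains the openness/closedness argument, the application of Ghys' rigidity to upgrade to a smooth conjugacy with a lattice in a finite cover of $\text{PSL}(2,\mathbb{R})$, and the use of the (locally, hence globally) constant translation-number cocycle and rotation numbers together with Lemma \ref{Matsumoto_Lemma} to obtain the topological conjugacy to $\rho_{An}$. Your handling of minimality and of injectivity matches the paper's reasoning.
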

\begin{rem}
Theorem \ref{open_closed_Anosov} and its corollary also remain true for Anosov representations of any hyperbolic orbifold group $\pi_1^{orb}(B_{hyp})$ in view of Remark \ref{torsion_lattice}. Since both Theorem \ref{open_closed_Anosov} and its corollary hold with respect to the $C^0$-topology, they yield a proof of the $C^0$-version of Theorem \ref{Rep_disconnected} without using contact topology in the form of Vogel's results \cite{Vog}.
\end{rem}
\begin{rem}
The arguments used in the proof of Theorem \ref{open_closed_Anosov} imply that Corollary \ref{Reebless_comp} holds for any Anosov foliation on a Seifert fibered space. One can use the closedness of the space of Anosov representations instead of the results of Matsumoto \cite{Mat} to rule out the first instance of unstable stacks of torus leaves and the remainder of the proof holds $verbatim$.
\end{rem}
If we restrict ourselves to the $C^1$-topology then we obtain a slight strengthening of Theorem \ref{open_closed_Anosov} which then distinguishes connected components rather than just path components.
\begin{thm}\label{open_closed_Anosov_C_1}
The space of Anosov representations $Rep_{An} \subset \text{Rep}(\pi_1(\Sigma_g),\text{Diff}_+(S^1))$ is both open and closed with respect to the $C^{1}$-topology. It has finitely many connected components which are distinguished by the rotation numbers of the images of a set of standard generators $a_i,b_i \in \pi_1(\Sigma_g)$.
\end{thm}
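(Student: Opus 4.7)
The plan is to build on Theorem \ref{open_closed_Anosov} and to invoke the classical $C^1$-structural stability of Anosov flows. To establish $C^1$-openness of $Rep_{An}$, I would argue as follows: given $\rho_0 \in Rep_{An}$, its suspension foliation is the weak unstable foliation of an Anosov flow $\Phi^t_{X_0}$ on $M(e)$. For a $C^1$-small perturbation $\rho$ of $\rho_0$, the perturbed foliation $\mathcal{F}_\rho$ is $C^1$-close to $\mathcal{F}_{\rho_0}$, hence transverse to the weak stable foliation $\mathcal{F}_s$ of $\Phi^t_{X_0}$. The vector field $X_\rho$ generating $\mathcal{F}_\rho \cap \mathcal{F}_s$ is then $C^1$-close to $X_0$, and Anosov's structural stability theorem \cite{Ano} guarantees that $X_\rho$ is itself Anosov with $\mathcal{F}_\rho$ as its weak unstable foliation. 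Alternatively, since the $C^1$-topology is finer than the $C^0$-topology, $C^1$-openness follows directly from the $C^0$-openness already established in Theorem \ref{open_closed_Anosov}.

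For $C^1$-closedness, I would observe that any $C^1$-convergent sequence is also $C^0$-convergent, so closedness is inherited from the $C^0$ case of Theorem \ref{open_closed_Anosov}. Once $Rep_{An}$ is known to be $C^1$-open and $C^1$-closed, it decomposes as a disjoint union of $C^1$-connected components of the representation space. The rotation numbers $\textrm{rot}(\rho(a_i)), \textrm{rot}(\rho(b_i))$ define $C^0$-continuous (hence $C^1$-continuous) functions on $\text{Rep}(\pi_1(\Sigma_g), \text{Diff}_+(S^1))$ which, restricted to $Rep_{An}$, take values in the finite set of $k$-th roots of unity (by Corollary \ref{Anosov_comp}, where $k$ is determined by the Euler class). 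A continuous map into a discrete set is locally constant, so these rotation numbers are constant on each $C^1$-connected component of $Rep_{An}$, yielding an invariant valued in $H^1(\pi_1(\Sigma_g), \mathbb{Z}_k)$.

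To close the argument, I would show that any two Anosov representations with matching rotation numbers on the generators lie in the same $C^1$-connected component. By Corollary \ref{Anosov_comp}, such representations are smoothly conjugate to embeddings into a finite cover $G_k$ of $\text{PSL}(2,\mathbb{R})$. By Goldman's theorem \cite{Gol}, the components of $\text{Rep}_{max}(\pi_1(\Sigma_g), G_k)$ are each connected (and Teichm\"uller-like) and distinguished by elements of $H^1(\pi_1(\Sigma_g), \mathbb{Z}_k)$, so two such embeddings can be joined by a $C^1$-path within a single component. Combined with the $C^1$-path-connectedness of $\text{Diff}_+(S^1)$ and the smoothness of conjugation, this produces a $C^1$-path of representations joining the two, which consists entirely of Anosov representations by the openness established above. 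This establishes the desired bijection between $C^1$-connected components of $Rep_{An}$ and the finite set of attainable rotation number tuples.

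The main obstacle I anticipate lies in the final step: making precise the construction of a $C^1$-path interpolating both the Teichm\"uller-type deformation within $\text{Rep}(\pi_1(\Sigma_g), G_k)$ and the conjugation by a $C^1$-path in $\text{Diff}_+(S^1)$. This requires care in tracking how the smooth conjugating diffeomorphism from Ghys's classification depends on the representation, but can be handled using the smoothness of group operations in $\text{Diff}_+(S^1)$ and the connectedness of each Goldman component.
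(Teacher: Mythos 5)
Your proposal is correct and takes essentially the same route as the paper, whose entire proof reads: openness follows immediately from the $C^1$-stability of the Anosov condition, and closedness follows precisely as in the proof of Theorem \ref{open_closed_Anosov}; your component count via locally constant rotation numbers, Ghys's smooth conjugacy and Goldman's theorem likewise mirrors the final paragraph of that earlier proof. The only caveat is that your ``alternative'' derivation of openness is not available as stated: Theorem \ref{open_closed_Anosov} establishes that $Rep_{An}$ is a union of $C^0$-path components and is $C^0$-closed, but not that it is $C^0$-open as a subset of the representation space (openness of $S_{An}$ along every path does not yield openness of the set, since the representation variety need not be locally path connected), so the structural-stability argument is the one that must carry the proof, exactly as in the paper.
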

\begin{proof}
The openness follows immediately from the $C^1$-stability of the Anosov condition. Closedness follows from the fact that any sequence of Anosov representations $(\rho_n)$ converging to some $\rho_0$ can (after possibly passing to a subsequence) be assumed to all be topologically conjugate to a \emph{fixed} Anosov representation by Theorem \ref{Ghys_top} and Matsumoto \cite{Mat}. Thus rotation numbers and translation numbers of lifts are automatically constant and one can argue exactly as in the proof of Theorem \ref{open_closed_Anosov} to deduce closedness.
\end{proof}
It would be interesting to know whether the above results also hold for (non-smooth) topological actions on $S^1$, which would involve showing some sort of stability statements for laminations that are Anosov in some suitable sense. In fact, by results of Ghys, topological actions on $S^1$ are classified up to semi-conjugacy by the bounded integral Euler class $e^{\mathbb{Z}}_b$. It then seems to be an open problem to determine the image
$$\textrm{Rep}(\pi_1(\Sigma_g),Homeo_+(S^1)) \stackrel{e^{\mathbb{Z}}_b} \longrightarrow H_b^2(\pi_1(\Sigma_g),\mathbb{Z}).$$
Note that the map to \emph{real bounded cohomology} given by the real bounded Euler class $e^{\mathbb{R}}_b$ is continuous with respect to the weak-$^*$ topology on the vector space $H_b^2(\pi_1(\Sigma_g),\mathbb{R})$. One also has a natural map
$$\textrm{Rep}(\pi_1(\Sigma_g),Homeo_+(S^1)) \stackrel{\Phi_g} \longrightarrow (S^1)^{2g} = T^{2g}$$
given by the rotation numbers on the standard generators of $\pi_1(\Sigma_g)$. Since the the bounded integral Euler class of an action on $S^1$ is determined by its real Euler class together with the rotation numbers on generators by Matsumoto \cite{Mat0}, one can identify the image of $e^{\mathbb{R}}_b \times \Phi_g$ with the image of $e^{\mathbb{Z}}_b$, which then inherits a topology in a natural way. It would be interesting to understand whether this image is connected or not, with respect to the weak-$^*$ topology, which would then provide insights into the topology of the representation spaces in which we are interested. In fact one knows that the image consists of classes that admit cocycle representatives taking only the values $0,1$. The straight line between any two such cocycles $tz_0 + (1-t)z_1$ obviously gives a continuous path in $H_b^2(\pi_1(\Sigma_g),\mathbb{R})$. However, it is not clear, and perhaps very unlikely, that this path lies in the image of the bounded Euler class.

\end{document}